\newtheorem{thm}{Theorem}
\newtheorem{lem}[thm]{Lemma}
\newtheorem{prop}[thm]{Proposition}
\newtheorem{defn}[thm]{Definition}
\newtheorem{cor}[thm]{Corollary}
\newtheorem{rem}[thm]{Remark}
\begin{document}

\newfont{\goth}{eufm10 scaled \magstep1}
\def\ga{\mbox{\goth a}}
\def\gp{\mbox{\goth p}}
\def\gc{\mbox{\goth c}}
\def\gg{\mbox{\goth g}}
\def\ge{\mbox{\goth e}}
\def\gh{{\mbox{\goth h}}}
\def\gk{\mbox{\goth k}}
\def\gl{\mbox{\goth l}}
\def\gf{\mbox{\goth f}}
\def\gm{\mbox{\goth m}}
\def\gn{\mbox{\goth n}}
\def\gq{\mbox{\goth q}}
\def\gr{\mbox{\goth r}}
\def\gs{\mbox{\goth s}}
\def\gt{\mbox{\goth t}}
\def\gu{\mbox{\goth u}}
\def\gw{\mbox{\goth w}}
\def\gsl{\mbox{\goth sl}}
\def\gsp{\mbox{\goth sp}}
\def\r{\mbox{\goth r}}
\def\gso{\mbox{\goth so}}
\def\gsu{\mbox{\goth su}}
\newcommand\Id{\mathrm{Id}}
\newcommand\Ad{\mathrm{Ad}}
\newcommand\ad{\mathrm{ad}}

\title{Invariant generalized complex structures on Lie groups}

\author{Dmitri V. Alekseevsky and Liana David}

\maketitle

{\bf Abstract.} We describe a class (called regular) of invariant
generalized complex structures on a real semisimple Lie group $G$.
The problem reduces to the description of admissible pairs $(\gk ,
\omega )$, where $\gk \subset \gg^{\mathbb{C}}$ is an appropriate
regular subalgebra of the complex Lie algebra $\gg^{\mathbb{C}}$
associated to $G$ and $\omega$ is a closed $2$-form on $\gk$, such
that $\mathrm{Im}\left(\omega\vert_{\gk \cap \gg}\right)$ is
non-degenerate.

In the case when $G$ is a semisimple Lie group of inner type  (in
particular, when $G$ is compact) a classification of regular
generalized complex structures on $G$ is given. We show that any
invariant generalized complex structure on a compact semisimple
Lie group is regular, provided that an additional natural
condition is satisfied.

In the case when $G$ is a semisimple Lie group of outer type, we
describe the subalgebras $\gk$ in terms of appropriate root
subsystems of a root system of $\gg^{\mathbb{C}}$ and we construct
a large class of admissible pairs $(\gk , \omega )$ (hence,
regular generalized complex structures of $G$).

\section{Introduction}

Let $M$ be a smooth manifold. We  will denote by $\mathbb{T}M = TM
\oplus T^*M$ the { \bf generalized  tangent bundle}   of  $M$,
defined  as the direct sum of tangent and cotangent bundles, and
by $g_{\mathrm{can}}$ the canonical indefinite metric on
$\mathbb{T}M$, given by
\begin{equation}\label{gcan}
g_{\mathrm{can}}(X+\xi ,Y+\eta ):= \frac{1}{2}\left( \xi (Y)+\eta
(X)\right) ,\quad\forall X+\xi ,Y+\eta\in\mathbb{T}M.
\end{equation}
A  { \bf generalized almost complex structure }
\cite{thesis,hitchin} on $M$ is a
$g_{\mathrm{can}}$-skew-symmetric field of endomorphisms
$${\mathcal J}:\mathbb{T}M  \rightarrow \mathbb{T}M $$
with ${\mathcal  J}^2 = - \Id $ (where ``$\Id$'' denotes the
identity endomorphism). A generalized almost complex structure
$\mathcal J$ is said to be integrable (or is a {\bf generalized
complex structure}) if  the $i$-eigenbundle $L =
\mathbb{T}^{1,0}M \subset (\mathbb{T}M)^{\mathbb{C}}$ of
$\mathcal{J}$ (called the {\bf holomorphic bundle}  of $\mathcal
J$) is closed under the complex linear extension of the Courant
bracket $[\cdot ,\cdot ]$, defined by
\begin{equation}\label{courant}
[X+\xi ,Y+\eta ] := [X, Y]+ L_{X}\eta -L_{Y}\xi -\frac{1}{2} d
\left( \eta(X) - \xi (Y)\right) ,
\end{equation}
for any smooth sections $X+ \xi$ and $Y + \eta$ of $\mathbb{T}M.$
Complex and symplectic structures define, in a natural way,
generalized complex structures and many definitions and results
from complex and symplectic geometry can be extended to
generalized complex geometry. This paper is concerned with
invariant generalized complex structures on Lie groups.

Section \ref{preliminar}
is intended to fix the notations and conventions we
shall use along the paper.  We recall basic facts we need about
real and complex semisimple Lie algebras, invariant complex structures on
homogeneous manifolds and generalized complex structures on manifolds.
Our approach follows closely
\cite{thesis,knapp,wang}.

In Section \ref{si1} we present an infinitesimal description of
invariant generalized complex structures on Lie groups, in terms
of the so called admissible pairs. The holomorphic bundle $L$ of
an invariant generalized complex structure $\mathcal J$ on a Lie
group $G$, with Lie algebra $\gg$, can be defined in terms of a
pair $(\gk ,\omega)$ (called {\bf $\gg$-admissible}), formed by a
subalgebra $\gk\subset\gg^{\mathbb{C}}$ and a closed $2$-form
$\omega\in\Lambda^{2}(\gk^{*} )$, with the property that
$\omega_{\gl}:= \mathrm{Im}\left(\omega\vert_{\gl}\right)$ is
non-degenerate, where $\gl =\gk\cap\gg$ is the real part of
$\gk\cap\bar{\gk}.$ The general theory of closed forms defined on
Lie algebras was developed in \cite{dmitri}. At the end of this
section we assume that $\gk$ decomposes into an ideal $\gp$ and a
complementary subalgebra $\gs$ and we find necessary and
sufficient conditions for a $2$-form
$\omega\in\Lambda^{2}(\gk^{*})$ to be closed (see Proposition
\ref{rho}). This result gives a distinguished class of admissible
pairs (see Proposition \ref{special}) and will be used repeatedly
in the next sections.

In Section \ref{regularsection} we begin our treatment of
invariant generalized complex structures on semisimple Lie groups.
An invariant generalized complex structure $\mathcal J$ on a
semisimple Lie group $G$ and the associated $\gg$-admissible pair
$(\gk , \omega )$  are called {\bf
regular} if $\gk$ is a regular subalgebra of $\gg^{\mathbb{C}}$, i.e.
is normalized by a Cartan subalgebra $\gh_{\gg}$ of $\gg = \mathrm{Lie}(G)$. We describe all regular
$\gg$-admissible pairs (hence also the associated generalized complex structures on $G$)
where $\gg$ is a real form of inner type and $\gh_{\gg}$ is a maximally
compact Cartan subalgebra of $\gg$ (see Proposition
\ref{corAdmissiblepair} and Theorem \ref{main}).
We study how
these generalized complex structures
can be reduced to the normal form by means of
invariant $B$-field transformations (see Proposition \ref{B-field}). At the end of
this section we show that any invariant generalized complex
structure on a compact semisimple Lie group $G$, with Lie algebra
$\gg$, is regular, provided that the Lie algebra $\gk$ of the
associated $\gg$-admissible pair has the property that $\gk \cap
\gg$ generates a closed subgroup of $G$ (see Theorem
\ref{sumarizat}).

Section \ref{examples}  is concerned with regular generalized complex structures
(or regular admissible pairs $(\gk , \omega )$)
on simple Lie groups $G$ of outer type, using the formalism of Vogan diagrams.
The classification of the subalgebras $\gk$ reduces to the description of the
so called $\sigma$-positive root systems (see Definition \ref{sigmapara}).
We describe explicitly these systems and we obtain a large class of regular generalized
complex structures on $G$.

The results of Sections \ref{regularsection} and \ref{examples}
may be seen as complementary to some previous constructions of
invariant generalized complex or K\"{a}hler structures on some
classes of homogeneous manifolds (e.g. nilpotent or solvable Lie
groups and their compact quotients \cite{ref1,cavalcanti,ref2},
homogeneous manifolds $G/K$, where $G$ is a compact Lie group and
$K$ a closed subgroup of maximal rank, and adjoint orbits in
semisimple Lie algebras \cite{brett}). They also complement a
result of M. Gualtieri, namely that any compact semisimple Lie
group of even dimension admits a $H$-twisted generalized
K\"{a}hler structure (see Example 6.39 of \cite{thesis}).\\

{\bf Acknowledgements.} We are grateful to Paul Gauduchon for his
interest in this work and discussions related to the theory of
real semisimple Lie algebras. We thank the referee for his useful
suggestions and comments and for pointing to us the articles
\cite{ref1,ref2,brett}. This work is partially supported by a
CNCSIS - PN II IDEI Grant, code no. 1187/2008, and Hamburg
University.

\section{Preliminary material }\label{preliminar}

\subsection{Invariant  complex structures on Lie
groups and homogeneous manifolds}\label{liegroups}

\subsubsection{Invariant complex structures on homogeneous manifolds}\label{liegroups}

The Lie algebra of a Lie group will be identified as usual with
the tangent space at the identity element or with the space of
left-invariant vector fields.

Let $G$ be a real Lie group, with Lie algebra $\gg$, and $L$ a
closed connected subgroup of $G$, with Lie  algebra  $\gl$.
Suppose that the space $M= G/ L$ of left cosets is reductive, i.e.
$\gg$ has an $\mathrm{Ad}_{L}$-invariant decomposition
\begin{equation}\label{reductive-deco}
\gg = \gl\oplus \gm .
\end{equation}
We shall identify $\gm$ with the tangent space $T_{o}M$ at the
origin $o= eL \in G/ L.$ An invariant complex structure $J$ on $M$
is determined by its value $J_{o}$ at $o$, which is an
$\mathrm{Ad}_{L}$-invariant complex structure on the vector space
$\gm = T_{o}M$. Let $\gm^{1,0}$ and
$\gm^{0,1}=\overline{\gm^{1,0}}$ be the holomorphic, respectively
anti-holomorphic subspaces of $J_{o}$, so that
\begin{equation}\label{m01}
\gm^{\mathbb{C}} = \gm^{1,0}\oplus\gm^{0,1}.
\end{equation}
The invariance and integrability of $J$ mean that $\gk =
\gl^{\mathbb{C}}+\gm^{1,0}$ is a  (complex) subalgebra of
$\gg^{\mathbb{C}}.$ Conversely, any decomposition (\ref{m01}) of
$\gm^{\mathbb{C}}$ into two vector spaces $\gm^{1,0}$ and
$\gm^{0,1}=\overline{\gm^{1,0}}$ such that $\gk =
\gl^{\mathbb{C}}+\gm^{1,0}$ is a subalgebra of $\gg^{\mathbb{C}}$,
defines an invariant complex structure on $M$. We get the
following well known  algebraic description of invariant complex
structures on reductive homogeneous manifolds.

\begin{prop}\label{deadaugat1}
Let $M= G/ L$ be a reductive homogeneous manifold,  with reductive decomposition
(\ref{reductive-deco}).
There is a  natural one to one correspondence between: \\

i) invariant complex structures on $M$;\\

ii) decompositions $\gm^{\mathbb{C}}= \gm^{1,0}\oplus \gm^{0,1}$,
where  $\gm^{0,1}=\overline{\gm^{1,0}}$ and $\gk =  \gl^{\mathbb{C}}+\gm^{1,0}$ is a subalgebra of $\gg^{\mathbb{C}}$;\\

iii) subalgebras $\gk\subset\gg^{\mathbb{C}}$ such that
\begin{equation}\label{def2}
\gk +\bar{\gk} = \gg^{\mathbb{C}},\quad
\gk\cap\bar{\gk}=\gl^{\mathbb{C}}.
\end{equation}
\end{prop}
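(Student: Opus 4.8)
The plan is to establish the equivalence of the three descriptions by constructing explicit maps between them and verifying that they are mutually inverse. The passage already sketches the path from (i) to (ii): an invariant complex structure $J$ is determined by its value $J_o$ at the origin, which is an $\mathrm{Ad}_L$-invariant complex structure on $\gm$, and decomposing $\gm^{\mathbb{C}}$ into the $\pm i$-eigenspaces of $J_o$ gives the splitting $\gm^{\mathbb{C}} = \gm^{1,0} \oplus \gm^{0,1}$ with $\gm^{0,1} = \overline{\gm^{1,0}}$. Conversely, any such conjugation-stable splitting defines an $\mathrm{Ad}_L$-invariant $J_o$, hence an invariant almost complex structure $J$. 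So the content of (i)$\Leftrightarrow$(ii) is really the statement that invariance plus integrability of $J$ is equivalent to $\gk = \gl^{\mathbb{C}} + \gm^{1,0}$ being a subalgebra.

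First I would make precise why $\gk$ being a subalgebra captures both invariance and integrability. The $\mathrm{Ad}_L$-invariance of $J_o$ is equivalent to $\gm^{1,0}$ being stable under $\mathrm{ad}_{\gl}$, which for the almost complex structure is the invariance condition; integrability of an invariant almost complex structure is governed by the Nijenhuis tensor, and for homogeneous structures this vanishes precisely when the $(1,0)$-part is closed under bracket. The clean algebraic formulation is that $\gl^{\mathbb{C}} + \gm^{1,0}$ is closed under the bracket of $\gg^{\mathbb{C}}$: the bracket $[\gl^{\mathbb{C}}, \gm^{1,0}] \subset \gm^{1,0}$ encodes invariance (together with $\gl^{\mathbb{C}}$ being a subalgebra, which it is automatically), and $[\gm^{1,0}, \gm^{1,0}] \subset \gl^{\mathbb{C}} + \gm^{1,0}$ encodes integrability. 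I would cite the standard references (\cite{wang}) for this well-known dictionary rather than reproving the Nijenhuis computation.

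The remaining work is the equivalence (ii)$\Leftrightarrow$(iii), which is purely linear-algebraic. Given a splitting as in (ii), set $\gk = \gl^{\mathbb{C}} + \gm^{1,0}$; I would check directly that $\gk + \bar{\gk} = \gl^{\mathbb{C}} + \gm^{1,0} + \gm^{0,1} = \gl^{\mathbb{C}} + \gm^{\mathbb{C}} = \gg^{\mathbb{C}}$, using $\overline{\gl^{\mathbb{C}}} = \gl^{\mathbb{C}}$ and $\overline{\gm^{1,0}} = \gm^{0,1}$, and that $\gk \cap \bar{\gk} = \gl^{\mathbb{C}}$, where the nontrivial inclusion $\gk \cap \bar{\gk} \subset \gl^{\mathbb{C}}$ follows from the directness of the sum $\gm^{\mathbb{C}} = \gm^{1,0} \oplus \gm^{0,1}$ together with the decomposition $\gg^{\mathbb{C}} = \gl^{\mathbb{C}} \oplus \gm^{\mathbb{C}}$. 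Conversely, given a subalgebra $\gk$ satisfying (\ref{def2}), I would define $\gm^{1,0}$ as a complement to $\gl^{\mathbb{C}}$ inside $\gk$ — concretely $\gm^{1,0} = \gk \cap \gm^{\mathbb{C}}$ after projecting appropriately — and recover the splitting of $\gm^{\mathbb{C}}$, then verify $\gm^{0,1} = \overline{\gm^{1,0}}$ from the conjugation-compatibility of the conditions in (\ref{def2}).

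The main obstacle I anticipate is the bookkeeping in (iii)$\Rightarrow$(ii): producing the correct $\gm^{1,0}$ from $\gk$ and showing the resulting subspaces are genuine complex-conjugate complements whose sum is direct. The subtlety is that $\gk$ need not contain $\gm^{1,0}$ as a literal summand transverse to $\gl^{\mathbb{C}}$ in a canonical way, so I would use a dimension count — from $\gk + \bar{\gk} = \gg^{\mathbb{C}}$ and $\gk \cap \bar{\gk} = \gl^{\mathbb{C}}$ one gets $\dim_{\mathbb{C}} \gk = \frac{1}{2}(\dim_{\mathbb{C}} \gg^{\mathbb{C}} + \dim_{\mathbb{C}} \gl^{\mathbb{C}})$, i.e. $\gk/\gl^{\mathbb{C}}$ has half the dimension of $\gm^{\mathbb{C}}$ — to guarantee that the induced splitting of $\gm^{\mathbb{C}}$ is into two halves exchanged by conjugation, which is exactly what is needed to define $J_o$. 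Everything else is routine verification.
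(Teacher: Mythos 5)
Your proposal is correct and takes essentially the same route as the paper, which states the proposition as well known after sketching exactly the (i)$\Leftrightarrow$(ii) dictionary you describe (invariance plus integrability of $J$ $\Leftrightarrow$ $\gk=\gl^{\mathbb{C}}+\gm^{1,0}$ is a subalgebra), leaving the (ii)$\Leftrightarrow$(iii) linear algebra you supply implicit. One remark: the ``subtlety'' you anticipate in (iii)$\Rightarrow$(ii) actually dissolves, since $\gl^{\mathbb{C}}=\gk\cap\bar{\gk}\subset\gk$ forces $\gk=\gl^{\mathbb{C}}\oplus(\gk\cap\gm^{\mathbb{C}})$ canonically, so $\gm^{1,0}:=\gk\cap\gm^{\mathbb{C}}$ works without any projection and your dimension count, while valid, is not needed.
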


In particular, if $M = G$ is a Lie group,
there is a one to one correspondence between invariant complex
structures on $G$ and decompositions
\[
\gg^{\mathbb{C}} = \gg^{1,0}\oplus \gg^{0,1},
\]
where $\gg^{1,0}$ and $\gg^{0,1}$ are subalgebras of
$\gg^{\mathbb{C}}$ and
$\gg^{0,1} = \overline{\gg^{1,0}}.$

\subsubsection{Some basic facts  on semisimple Lie algebras}

We fix our notations from the theory of semisimple Lie algebras.\\

{\bf Complex semisimple Lie algebras.} Recall that any complex
semisimple Lie algebra $\gg^{\mathbb C}$ has a root space
decomposition
\[
\gg^{\mathbb{C}} = \gh +\gg (R)= \gh + \sum_{\alpha\in
R}\gg_{\alpha}
\]
with respect to a Cartan subalgebra $\gh$, where $R \subset \gh^*$
is the  root system of $\gg^{\mathbb C}$ relative to $\gh$ and for
any subset $P \subset R$ we denote by $\gg(P) $  the direct sum of
root spaces
\[
\gg(P) := \sum_{\alpha \in P} \gg_{\alpha}.
\]
Along the paper we will denote by $E_\alpha \in  \gg_\alpha$  the
root vectors of a Weyl basis of $\gg(R)$, which have the following
properties:
\begin{enumerate}
    \item[i)]
\[
\langle E_{\alpha}, E_{-\alpha}\rangle =1,\quad\forall\alpha \in
R,
\]
where $B= \langle\cdot , \cdot\rangle$  is the  Killing  form of $\gg^{\mathbb C}$; \item[ii)] the structure constants
$N_{\alpha\beta}$ defined by
\[
[E_{\alpha}, E_{\beta}]= N_{\alpha\beta} E_{\alpha
+\beta},\quad\forall \alpha ,\beta \in R
\]
are real and satisfy
\[
N_{-\alpha ,-\beta} = - N_{\alpha \beta},\quad\forall \alpha
,\beta \in R.
\]
\end{enumerate}
We will identify  the  dual space  $\gh^*$  with $\gh$ using  the
restriction  $\langle\cdot, \cdot\rangle$ of the Killing form to
$\gh$, which is non-degenerate on $\gh$ and positive definite on
the real form $\gh(\mathbb{R})$ of $\gh$, spanned by the root
system $R \subset \gh$. For a set of roots $P$, we will constantly
use the notation $P^{\mathrm{sym}}:=P \cap (-P)$ for the symmetric
part of $P$ and $P^{\mathrm{asym}} := P \setminus
P^{\mathrm{sym}}$  for
the anti-symmetric part.\\

{\bf Real semisimple Lie algebras.}  Let $\gg$ be  a real
semisimple Lie algebra. Its complexification  $\gg^{\mathbb{C}}$
is a complex  semisimple Lie algebra and $\gg = (\gg^{\mathbb
C})^{\sigma}$, that is, $\gg$ can be reconstructed from
$\gg^{\mathbb{C}}$ as the fix point set of a {\bf complex
conjugation} or {\bf antiinvolution}
\[
\sigma :\gg^{\mathbb C} \rightarrow \gg^{\mathbb C},\quad
x\rightarrow \bar{x},
\]
that is, $\sigma$ is an involutive automorphism of
$\gg^{\mathbb{C}}$, as a real Lie algebra, and is antilinear. One
can always assume that the antiinvolution $\sigma$ preserves a
Cartan subalgebra $\gh $ of $\gg^{\mathbb{C}}$. Then $\gh_{\gg} =
\gh^\sigma := \gh \cap \gg $ is a  Cartan subalgebra of $\gg$. The
antiinvolution $\sigma$ acts naturally on the  set of roots $R$ of
$\gg^{\mathbb{C}}$ relative to $\gh$, by
\[
\sigma (\alpha ):= \overline {\alpha\circ\sigma },\quad \forall
\alpha\in R
\]
and
\[
\sigma (\gg_{\alpha} )= \gg_{\sigma (\alpha )},\quad\forall\alpha\in R.
\]
The {\bf compact  real form} of a complex semisimple Lie algebra
$\gg^{\mathbb{C}}$ is unique (up to conjugation) and is defined by
the antiinvolution $\tau$ (called  {\bf compact}) given  by
\[
\tau |_{\gh({\mathbb R})} =-\Id,  \,\,  \tau (E_\alpha )= -
E_{-\alpha},\, \forall \alpha \in R.
\]
Assume now that $\gg$ is a real form of $\gg^{\mathbb C}$, not
necessarily compact. Consider a Cartan decomposition
\[
\gg = \gk \oplus \gp
\]
of $\gg$ and let $\theta$ be the associated Cartan involution,
with $+1$ eigenspace $\gk$ and $-1$ eigenspace $\gp .$ The real
form
\[
\gu = \gk \oplus i \gp
\]
is compact, the corresponding antiinvolution $\tau$ commutes with
$\theta$ and $\gg = (\gg^{\mathbb C})^{\sigma}$, where $\sigma : =
\theta \circ \tau$. A $\theta$-invariant Cartan subalgebra
$\gh_{\gg}$ of $\gg$ decomposes as
\[
\gh_{\gg} = \gh^{+} \oplus \gh^{-},\quad \gh^{+}\subset \gk ,\quad
\gh^{-}\subset \gp
\]
and any root of $\gg^{\mathbb C}$ relative to
$(\gh_{\gg})^{\mathbb C}$ takes purely imaginary values on
$\gh^{+}\oplus i \gh^{-}.$ The Cartan subalgebra $\gh_{\gg}$ is
called maximally compact (respectively, maximally non-compact) if
the dimension of $\gh^{+}$ (respectively, $\gh^{-}$) is as large
as possible. The real form $\gg$ is of {\bf inner type} if
$\theta$ is an inner automorphism of $\gg$, that is, it belongs to
$\mathrm{Int}(\gg)$. The definition is independent of the choice
of $\theta$, since any two Cartan involutions of $\gg$ are
conjugated via $\mathrm{Int}(\gg )$, see e.g.
\cite[p.\,301]{knapp}. It can be shown that $\gg$ is of inner type
if and only if there is a (maximally compact) Cartan subalgebra $\gh_{\gg}$ included
in $\gk$, see e.g. \cite[p.\,24]{burstall}. Then
\[
\sigma (\alpha ) = - \alpha
\]
for any root $\alpha$ of $\gg^{\mathbb{C}}$ relative to
$(\gh_{\gg})^{\mathbb C}.$ Any compact real form is a real form of inner type.\\

A non-inner real form $\gg$ (and the corresponding antilinear
involution) is called {\bf outer}. For such a real form, the
Dynkin diagram of $\gg^{\mathbb{C}}$ admits a non-trivial
automorphism of order two, defined by the Cartan involutions of
$\gg$ (more precisely, any Cartan involution $\theta$ permutes a
set of simple roots of $\gg^{\mathbb C}$ relative to
$(\gh_{\gg})^{\mathbb C}$, where $\gh_{\gg}\subset \gg$ is a
maximally compact $\theta$-invariant Cartan subalgebra, giving
rise to the resulting automorphism of the Dynkin diagram, see
\cite[p.\,339]{knapp}). In particular, if $\gg^{\mathbb C}$ is
simple, then $\gg^{\mathbb{C}} = A_{n}$, $(n\geq 2$), $D_{n}$
$(n\geq 3)$ or $E_{6}.$ The list of simple non-complex Lie
algebras of outer type is short and it is given below
(see \cite{knapp}, Appendix C):\\
$$ \gsl_n(\mathbb{R})\, (n>2),\, \gsl_n(\mathbb{H})\, (n\geq 2),\, \{ \gso_{2p+1,2q+1},\,
0\leq p\leq q\}\setminus \{ \gso_{1,1},\gso_{1,3}\},\,
\ge_6(\gf_4),\, \ge_6(\gsp_4).$$ For the real forms of the
exceptional Lie algebra $\ge_6$ we indicate in the bracket the
type of maximal compact subalgebras. A Lie group is called {\bf inner} (respectively,
{\bf outer})  if its Lie algebra is inner (respectively, outer).

\subsubsection{Invariant complex structures on homogeneous
manifolds of compact semisimple Lie groups}\label{wangsection}

Let $F = G/K = {\mathrm Ad}_G(x_0) \subset \gg $ be  a flag manifold
of a compact connected semisimple Lie group $G$,
with Lie algebra $\gg = (\gg^{\mathbb{C}})^{\tau}$,
and $K = T\cdot L$ a decomposition of  the stabilizer
$K$ into a product of a  torus
$T$  and a semisimple group $L$.
Let $T^{\prime}$ be a subtorus of $T$, such that the quotient $T/ T^{\prime}$ has even
dimension.  The total space of the fibration
\[
 \pi:M = G/(L\cdot T^{\prime}) \rightarrow F= G/K
\]
admits an invariant complex structure, defined  by an invariant  complex structure on the fiber $T/ T^{\prime}$  and an invariant
complex structure on the base $F$, such that $\pi$ is an holomorphic fibration.
Moreover, any invariant complex structure on $M$ is obtained in this way.

More precisely,  let $\gh_{\gg}$ be
a Cartan subalgebra of $\gg$ included in $\gk =\mathrm{Lie}(K).$
The complexification of the $B$-orthogonal reductive decomposition
$\gg = \gk \oplus \gp$
of $F = G/K$ is given by
\[
\gk^\mathbb{C} = \gh + \gg([\Pi_0]),\quad \gp^{\mathbb{C}} = \gg(R^{\prime}),
 \quad R^{\prime} = R \setminus [\Pi_0]
\]
where  $\Pi_0  \subset \Pi $  is a subset  of a system
$\Pi$ of simple roots of $\gg^{\mathbb{C}}$  relative to
$\gh = (\gh_{\gg})^{\mathbb{C}}$ and
$[\Pi_0]:= R \cap {\mathrm{span}}(\Pi_0 )$
is the set of all roots from $R$ spanned by $\Pi_{0}.$
The Cartan subalgebra $\gh_{\gg}$ decomposes into a
$B$-orthogonal direct sum
\[
\gh_{\gg} = \gt \oplus \gh_0  = \gt^{\prime} \oplus \mathfrak{a}\oplus \gh_{0}
\]
where $\gt = \mathrm{Lie} (T )$ and $\gt^{\prime} =\mathrm{Lie}(T^{\prime}).$
The complexification of the  reductive decomposition of $ M= G/(L\cdot T^{\prime}) $ is
given by
\[
 \gg^{\mathbb{C}} =  (\gl +\gt^{\prime})^{\mathbb{C}} \oplus\gm^{\mathbb{C}}
\]
where $\gl =\mathrm{Lie}(L)$,
$\gl^{\mathbb{C}}=  \gh_0^{\mathbb{C}} + \gg([\Pi_{0}])$
and  $\gm^{\mathbb{C}}=\mathfrak{a}^{\mathbb{C}} + \gg(R^{\prime}).$

 \begin{thm}\label{wang} \cite{wang} \begin{enumerate}
\item[i)] The  compact homogeneous manifold $M = G/(L\cdot T^{\prime})$ described above  admits
an invariant  complex structure defined by  the decomposition
\[
\mathfrak{m}^{\mathbb{C}}= \mathfrak{m}^{1,0}\oplus
\overline{\mathfrak{m}^{1,0}}= \mathfrak{m}^{1,0}\oplus
\tau(\mathfrak{m}^{1,0}),
\]
where
\[
\mathfrak{m}^{1,0}= {\mathfrak{a}}^{1,0}+ \gg
(R^{\prime}_{+}), \quad R^{\prime}_+ := R^+ \cap (R \setminus
[\Pi_0]),
\]
$R^{+}$ is the positive root system defined by $\Pi$
and $\mathfrak{a}^{1,0}$ is  the holomorphic  subspace of a
complex structure  $J^{\mathfrak{a}}$ on $\mathfrak{a} $.

\item[ii)] Conversely, any invariant complex structure on an
homogeneous manifold of a compact, connected, semisimple
Lie group $G$ can be obtained by this construction.

\end{enumerate}

\end{thm}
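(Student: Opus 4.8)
The plan is to translate everything through Proposition \ref{deadaugat1}: an invariant complex structure on the reductive homogeneous space $M = G/(L\cdot T')$ is the same datum as a complex subalgebra $\gk\subset\gg^{\mathbb{C}}$ satisfying $\gk + \tau(\gk) = \gg^{\mathbb{C}}$ and $\gk\cap\tau(\gk) = \gr^{\mathbb{C}}$, where $\tau$ is the conjugation fixing the compact form $\gg$ (so $\bar{\phantom{x}}=\tau$) and $\gr := \gl + \gt'$ is the isotropy subalgebra. Part i) then amounts to checking that the proposed $\gk$ is such a subalgebra, and part ii) to showing that every such $\gk$ has the stated shape.

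For part i), I would set $\gh' := \gh_0^{\mathbb{C}} + (\gt')^{\mathbb{C}} + \mathfrak{a}^{1,0}$, so that $\gk := \gr^{\mathbb{C}} + \gm^{1,0} = \gh' + \gg([\Pi_0]) + \gg(R'_+)$, and verify $[\gk,\gk]\subseteq\gk$ by running over brackets of root vectors. The subspace $\gh'$ lies in the Cartan $\gh=(\gh_{\gg})^{\mathbb{C}}$ and hence acts diagonally, preserving each $\gg_\alpha$; the subsystem $[\Pi_0] = R\cap\operatorname{span}(\Pi_0)$ is closed and $\gh_0^{\mathbb{C}}+\gg([\Pi_0]) = \gl^{\mathbb{C}}$ is a subalgebra. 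The decisive combinatorial facts are: (a) if $\alpha\in[\Pi_0]$, $\beta\in R'_+$ and $\alpha+\beta\in R$, then $\alpha+\beta$ still has a strictly positive coefficient on some simple root outside $\Pi_0$, hence $\alpha+\beta\in R'_+$; (b) a sum of two roots of $R'_+$ that is again a root lies in $R'_+$; and (c) the roots of $[\Pi_0]$ vanish on $\mathfrak{a}\subseteq\gt$, the centre of $\gk=\mathrm{Lie}(K)$, so that $[\mathfrak{a}^{1,0},\gg([\Pi_0])]=0$. Together these close $\gk$ under the bracket. Since $\tau(\gm^{1,0}) = \mathfrak{a}^{0,1}+\gg(R'_-)$ and $R'_+\sqcup R'_- = R\setminus[\Pi_0]$, one gets $\gm^{\mathbb{C}} = \gm^{1,0}\oplus\tau(\gm^{1,0})$, and Proposition \ref{deadaugat1} produces the invariant complex structure.

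Part ii) is the substantial direction. Starting from an arbitrary subalgebra $\gk$ with $\gk+\tau(\gk)=\gg^{\mathbb{C}}$ and $\gr^{\mathbb{C}}:=\gk\cap\tau(\gk)$ reductive, I would first take a Levi decomposition $\gk=\gs\ltimes\gn$ with $\gn$ the nilradical and the reductive factor chosen so that $\gr^{\mathbb{C}}\subseteq\gs$, then pick a Cartan subalgebra $\gh'$ of $\gs$ containing a Cartan subalgebra $\gt_\gr^{\mathbb{C}}$ of $\gr^{\mathbb{C}}$ (the complexified maximal torus of $\gr$). The key claim is that $\gh := \gh' + \tau(\gh')$ is a Cartan subalgebra of $\gg^{\mathbb{C}}$, of the form $(\gh_{\gg})^{\mathbb{C}}$ for a maximal torus $\gh_{\gg}\subseteq\gg$, and that $\gh_{\gg}$ normalizes $\gk$; that is, $\gk$ is regular. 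Granting this, $\gk$ decomposes with respect to $\gh$ as $\gk = \gh' + \gg(P)$ with $P = \{\alpha\in R : \gg_\alpha\subseteq\gk\}$, and the structural conditions translate into root combinatorics: closedness of $P$ from $[\gk,\gk]\subseteq\gk$; $P^{\mathrm{sym}}$ conjugate to a subsystem $[\Pi_0]$ for a suitable simple system $\Pi$, cut out by $\gr^{\mathbb{C}}$; and $P^{\mathrm{asym}}$ together with its negative exhausting $R\setminus P^{\mathrm{sym}}$, so that a choice of positivity identifies $P^{\mathrm{asym}}$ with $R'_+$. Finally $\mathfrak{a} := \gh_{\gg}\ominus\gt_\gr$ carries the complex structure $J^{\mathfrak{a}}$ whose holomorphic space is $\mathfrak{a}^{1,0} = \gh'\cap\mathfrak{a}^{\mathbb{C}}$, the non-$\tau$-fixed part of $\gh'$.

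The main obstacle is precisely the regularity claim in part ii): that an arbitrary integrable invariant complex structure is diagonal with respect to some common maximal torus of $G$. This is where compactness of $G$ is indispensable — it forces $\gr^{\mathbb{C}}=\gk\cap\tau(\gk)$ to be reductive and lets one choose the Levi factor and Cartan $\tau$-compatibly, and it is also what guarantees that the conjugate directions $\tau(\gh')$ again normalize $\gk$, so that $\gh'+\tau(\gh')$ is a genuine, $\tau$-stable Cartan of $\gg^{\mathbb{C}}$ rather than a larger reductive centralizer. Once regularity is established, the passage to the root-theoretic normal form $(\Pi_0,\mathfrak{a},R'_+,J^{\mathfrak{a}})$ is the same bookkeeping carried out concretely in part i).
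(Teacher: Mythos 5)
First, a point of comparison: the paper does not prove Theorem \ref{wang} at all — it is quoted from Wang's 1954 paper \cite{wang} and used as a black box (e.g.\ in the proof of Theorem \ref{sumarizat}) — so your attempt can only be judged on its own merits. Your part i) is correct and complete: the combinatorial facts (a)--(c) are exactly what is needed to check that $\gk=\gr^{\mathbb{C}}+\gm^{1,0}$ is closed under the bracket, and since $\overline{X}=\tau(X)$ and $\tau(\gg(R'_+))=\gg(-R'_+)$, the decomposition $\gm^{\mathbb{C}}=\gm^{1,0}\oplus\tau(\gm^{1,0})$ holds and Proposition \ref{deadaugat1} applies. The genuine gap is in part ii), which is where the entire content of Wang's theorem lives. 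Your argument rests on the ``key claim'' that, after choosing a decomposition $\gk=\gs\ltimes\gn$ with $\gr^{\mathbb{C}}\subseteq\gs$ reductive and a Cartan subalgebra $\gh'$ of $\gs$, the space $\gh'+\tau(\gh')$ is a Cartan subalgebra of $\gg^{\mathbb{C}}$ that normalizes $\gk$. You never prove this; you only assert that compactness ``guarantees'' it. As stated, for an arbitrary choice of $\gh'$ it is simply false: already in $\gsl_2(\mathbb{C})$ with compact conjugation $\tau(e)=-f$, $\tau(h)=-h$, the line $\gh'=\mathbb{C}(h-2e)$ is a Cartan subalgebra, $\tau(\gh')=\mathbb{C}(h-2f)$, and $[h-2e,\,h-2f]=4(e+f+h)\neq 0$, so $\gh'+\tau(\gh')$ is not even abelian, let alone a Cartan subalgebra. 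Likewise, nothing in your setup forces $\tau(\gh')$ to normalize $\gk$: a priori $\tau(\gh')$ normalizes $\tau(\gk)$, and the only evident way to conclude that it also normalizes $\gk$ is to already know that $\gk$ is normalized by a $\tau$-stable Cartan subalgebra — which is precisely the regularity statement you are trying to establish. The phrase ``choose the Levi factor and Cartan $\tau$-compatibly'' therefore carries the whole theorem, and no construction or proof of the existence of such a choice is offered; your own closing paragraph concedes this is ``the main obstacle'' without overcoming it.

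There are also secondary unproved steps in the same place: a reductive complement $\gs$ to the nilradical $\gn$ need not exist for an abstract Lie algebra (Levi's theorem splits off the solvable radical, not the nilradical), and positioning $\gr^{\mathbb{C}}$ inside a maximal reductive subalgebra requires a Malcev-type conjugacy theorem; both would have to be justified for $\gk$, whose internal structure is exactly what is unknown at this stage. The net effect is that your part ii) reduces Wang's theorem to the assertion that every subalgebra $\gk$ with $\gk+\tau(\gk)=\gg^{\mathbb{C}}$ and $\gk\cap\tau(\gk)=\gr^{\mathbb{C}}$ is regular with respect to some maximal torus of $\gg$ — but that assertion \emph{is} Wang's theorem, and its proof in \cite{wang} is a substantial structure-theoretic argument (built around the action of a maximal torus of the isotropy algebra and centralizer/normalizer analysis), none of which appears in your sketch. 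In short: part i) stands; part ii) is a plan whose central step is missing and, in the generality in which you state it, false.
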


\subsection{Generalized complex structures on
manifolds}\label{prel1}

In this paper we consider only generalized  complex structures of
constant type.

Let $\mathcal J$ be a generalized almost complex structure on an
$n$-dimensional manifold $M$. According to M. Gualtieri
\cite{thesis},
 the
 holomorphic bundle $L=\mathbb{T}^{1,0}M \subset (\mathbb{T}M)^{\mathbb{C}}$ of $\mathcal
J$ can be described in terms of a subbundle $E\subset
(TM)^{\mathbb{C}}$ and a $2$-form $\tilde{\omega}\in
\Gamma(\Lambda^{2}(E^{*}))$ defined on $E$. We now recall this
description.

Note that $L\subset (\mathbb{T}M)^{\mathbb{C}}$ is isotropic with
respect to the complex linear extension of $g_{\mathrm{can}}$ and
 $L\oplus \bar{L}= (\mathbb{T}M)^{\mathbb{C}}.$
Conversely, any isotropic subbundle $L\subset
(\mathbb{T}M)^{\mathbb{C}}$ such that
 $L\oplus \bar{L}= (\mathbb{T}M)^{\mathbb{C}}$
defines a generalized almost complex structure $\mathcal J$, whose
complex linear extension to $(\mathbb{T}M)^{\mathbb{C}}$ satisfies
${\mathcal J}\vert_{L}= i$ and ${\mathcal J}\vert_{\bar{L}}=-i.$
These considerations play a key role in the proof of the following
Proposition:

\begin{prop}\label{cheie}\cite[p.\,49]{thesis} A complex rank $n$ subbundle $L$ of $(\mathbb{T}M)^{\mathbb{C}}$
is the  holomorphic bundle of a
generalized almost complex structure ${\mathcal J}$ if and only if
it is of the form
\[
L= L(E, \tilde{\omega} ):= \{ X+\xi\in E\oplus (T^{\mathbb{C}}M)^{*},
\quad \xi\vert_{E}= \tilde{\omega} (X,\cdot )\}
\]
where $E\subset (TM)^{\mathbb{C}}$ is a complex
subbundle and
$\tilde{\omega}\in \Gamma(\Lambda^{2}(E^{*}))$ is a complex  2-form on $E$   such that
the imaginary  part $\mathrm{Im}\left(\tilde{\omega}\vert_{\Delta}\right)$ is non-degenerate.
Here
\[
\Delta = E \cap TM  \subset TM
\]
is the real part of $E\cap\bar{E}$, i.e.
\[
\Delta^{\mathbb{C}}=E\cap\bar{E}.
\]
Moreover, ${\mathcal J}$ is integrable if and only if $E$ is
involutive (i.e. its space of sections is closed under the Lie
bracket) and $d_{E}\tilde{\omega} =0$, where $d_{E}$ is the
exterior derivative along $E$.
\end{prop}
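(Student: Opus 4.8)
The plan is to prove Proposition \ref{cheie} in two halves, corresponding to the "if and only if" structure: first the characterization of holomorphic bundles as the graphs $L(E,\tilde\omega)$, and then the integrability criterion. The whole argument rests on the two observations recorded just before the statement: that the holomorphic bundle $L$ of a generalized almost complex structure is isotropic with respect to $g_{\mathrm{can}}$ and satisfies $L \oplus \bar L = (\mathbb{T}M)^{\mathbb{C}}$, and conversely that any such subbundle defines a generalized almost complex structure $\mathcal J$ via ${\mathcal J}|_L = i$, ${\mathcal J}|_{\bar L} = -i$. So I would reduce everything to a purely linear-algebraic analysis of maximal isotropic subbundles $L$ with $L \oplus \bar L = (\mathbb{T}M)^{\mathbb{C}}$, done fiberwise.

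For the first half, given a subbundle $L$ of complex rank $n$, set $E := \pi_{TM}(L)$, the projection of $L$ to $(TM)^{\mathbb{C}}$ along the cotangent directions. A section $X + \xi \in L$ then has $X \in E$, and I would show that the isotropy of $L$ forces $\xi|_E$ to be determined by $X$ through a skew form: if $X+\xi$ and $Y+\eta$ both lie in $L$, then $g_{\mathrm{can}}(X+\xi, Y+\eta) = \frac{1}{2}(\xi(Y) + \eta(X)) = 0$, which after a short manipulation yields a well-defined alternating $\tilde\omega \in \Lambda^2(E^*)$ with $\xi|_E = \tilde\omega(X, \cdot)$, exactly the form in the displayed definition of $L(E,\tilde\omega)$. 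The condition $\xi|_E = \tilde\omega(X,\cdot)$ leaves the values of $\xi$ on a complement of $E$ free, so the rank bookkeeping reads $\dim_{\mathbb{C}} L = \dim_{\mathbb{C}} E + \operatorname{codim}_{\mathbb{C}} E = n$, confirming that $L$ has the right rank automatically. The subtle point is the non-degeneracy of $\mathrm{Im}(\tilde\omega|_\Delta)$: I would translate the transversality condition $L \cap \bar L = 0$ (equivalent to $L \oplus \bar L = (\mathbb{T}M)^{\mathbb{C}}$ by the rank count) into a statement about the real subspace $\Delta$ with $\Delta^{\mathbb{C}} = E \cap \bar E$, and show that an element of $L \cap \bar L$ corresponds precisely to a vector in the radical of $\mathrm{Im}(\tilde\omega|_\Delta)$; hence $L \cap \bar L = 0$ iff $\mathrm{Im}(\tilde\omega|_\Delta)$ is non-degenerate. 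The converse direction — that every $L(E,\tilde\omega)$ with this non-degeneracy is isotropic and transverse to its conjugate — is the same computation run backwards.

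For the integrability half, I would use that $\mathcal J$ is integrable iff $L$ is closed under the Courant bracket \eqref{courant}. Taking two sections $X+\xi, Y+\eta$ of $L = L(E,\tilde\omega)$, the tangential component of $[X+\xi, Y+\eta]$ is $[X,Y]$, so closure first forces $[X,Y] \in E$, i.e. $E$ must be involutive. Assuming involutivity, I would then compute the cotangent component of the Courant bracket and check when it again satisfies the defining relation $\xi'|_E = \tilde\omega([X,Y], \cdot)$; expanding the Lie-derivative and exterior-derivative terms in \eqref{courant} and comparing with $\tilde\omega([X,Y],\cdot)$ should collapse, after using $\xi|_E = \tilde\omega(X,\cdot)$ and $\eta|_E = \tilde\omega(Y,\cdot)$, precisely into the vanishing of $d_E\tilde\omega$ evaluated on $X,Y$ and a third argument. \textbf{The main obstacle} I expect is this last bracket computation: one must be careful that $\xi, \eta$ are only constrained on $E$ (their values off $E$ are unconstrained), so the terms $L_X\eta - L_Y\xi - \frac12 d(\eta(X) - \xi(Y))$ must be restricted to $E$ before the comparison, and one must verify that the ambiguity in the off-$E$ values does not affect the tangential argument of $d_E\tilde\omega$. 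Keeping track of which covariant objects are being restricted to $E$ versus extended off it, and confirming that the Courant-bracket expression is independent of the chosen extensions when restricted to $E$, is where the care is needed; once that is organized, the identification with $d_E\tilde\omega = 0$ is a direct expansion.
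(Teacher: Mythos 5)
Your plan for the integrability half is sound, and your first half starts correctly: setting $E:=\pi_{TM}(L)$, using isotropy to get well-definedness and skewness of $\tilde\omega$, and counting ranks to conclude $L=L(E,\tilde\omega)$ is the right argument. (For the record, the paper offers no proof of this proposition --- it cites Gualtieri's thesis --- so the only question is whether your argument stands on its own.) Your handling of the Courant bracket, including the observation that the covector part must be restricted to $E$ and that involutivity makes this restriction independent of the extensions of $\xi,\eta$ off $E$, is exactly what is needed there.

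The genuine gap is in the transversality step. Your claim that ``an element of $L\cap\bar L$ corresponds precisely to a vector in the radical of $\mathrm{Im}(\tilde\omega|_\Delta)$'' is false: an element $X+\xi\in L\cap\bar L$ with $X=0$ is exactly a covector $\xi$ annihilating $E+\bar E$, and such nonzero elements exist whenever $E+\bar E\neq (TM)^{\mathbb{C}}$, no matter what $\tilde\omega$ is. The correct linear-algebra statement is: $L(E,\tilde\omega)\cap\overline{L(E,\tilde\omega)}=0$ if and only if $E+\bar E=(TM)^{\mathbb{C}}$ \emph{and} $\mathrm{Im}\left(\tilde\omega|_{\Delta}\right)$ is non-degenerate. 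In the forward direction of the proposition this costs nothing, since $L\oplus\bar L=(\mathbb{T}M)^{\mathbb{C}}$ projects onto $E+\bar E=(TM)^{\mathbb{C}}$; but your converse (``the same computation run backwards'') genuinely fails. Concretely, on $M=\mathbb{R}^{4}$ take $E=\mathrm{span}_{\mathbb{C}}\{\partial_{1},\partial_{2}\}$ and $\tilde\omega=i\,(dx^{1}\wedge dx^{2})|_{E}$: then $\mathrm{Im}\left(\tilde\omega|_{\Delta}\right)$ is non-degenerate on $\Delta=\mathrm{span}_{\mathbb{R}}\{\partial_{1},\partial_{2}\}$, yet $dx^{3}\in L(E,\tilde\omega)\cap\overline{L(E,\tilde\omega)}$, so $L(E,\tilde\omega)$ is not the holomorphic bundle of any generalized almost complex structure. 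This also shows that the proposition as stated in the paper silently omits the hypothesis $E+\bar E=(TM)^{\mathbb{C}}$, which is present in Gualtieri's original formulation; a correct proof must add it (or note that it is needed), since it cannot be derived from the non-degeneracy condition alone. A careful execution of your own plan would have surfaced this: the covectors with zero tangent component are precisely the case your radical correspondence overlooks.
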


The codimension of the subbundle $E \subset  T^{\mathbb C}M$ is called
the {\bf type} of the generalized complex structure $\mathcal
J.$

Any complex or symplectic structure defines a generalized complex
structure (see e.g. \cite{thesis}). Other examples of generalized
complex structures can be obtained using $B$-field transformations,
as follows. Any closed $2$-form $B\in\Omega^{2}(M)$ (usually
called a {\bf $B$-field}) defines an automorphism of $\mathbb{T}M$,
by
\[
\mathrm{exp}(B)(X+\xi ) = X + i_{X}B +\xi ,\quad \forall X+\xi
\in\mathbb{T}M
\]
which preserves the Courant bracket (this follows from $dB=0$). If
$\mathcal J$ is a generalized complex structure on $M$, with
holomorphic bundle $L(E, \tilde{\omega})$, then $L(E, \tilde{\omega} +i^{*}B
)$, where $i^{*}B\in\Lambda^{2}(E^{*})$ is the restriction of (the complexification of) $B$ to $E$, is the
holomorphic bundle of another generalized complex structure
$\mathrm{exp}(B)\cdot {\mathcal J}$, called the {\bf $B$-field
transformation of $\mathcal J$}. Obviously,
 a $B$-field transformation preserves  the
type.

The last notion we need from generalized complex geometry is the
normal form of generalized complex structures \cite[p.\,1507]{ornea}.
Recall first that an (almost) $f$-structure on a manifold $M$ is an
endomorphism $F$ of $TM$ satisfying $F^{3}+ F =0.$ Let $T^{0}M$,
$T^{1,0}M$ and $T^{0,1}M$ be the eigenbundles of the complex
linear extension of $F$, with eigenvalues $0$, $i$ and $-i$
respectively. A (real) $2$-form $\eta\in\Omega^{2}(M)$ is called
compatible with $F$ if $\eta_{\mathbb C}\vert_{T^{0}M}$ is
non-degenerate and $\mathrm{Ker}(\eta_{\mathbb C} ) =
T^{1,0}M\oplus T^{0,1}M$, where $\eta_{\mathbb C}$ is the
complex linear extension of $\eta .$ A generalized (almost)
complex structure $\mathcal J$ on $M$, with holomorphic bundle
$L$, is in {\bf normal form} if $L= L(T^{0}M\oplus T^{1,0}M,
i\eta_{\mathbb C} )$ for some almost $f$-structure and
compatible $2$-form $\eta$. In the language of \cite{thesis},
this means that ${\mathcal J}_{p}$, at any $p\in M$,  is the
direct sum of a complex structure and a symplectic structure.

\section{Infinitesimal description of invariant generalized complex structures on Lie
groups}\label{si1}

\subsection{Admissible pairs: definition and general results}

Generalized geometry of Lie groups and homogeneous spaces already
appears in the literature (see e.g.
\cite{ref1,cavalcanti,ref2,brett}). It is known that  invariant
generalized complex structures on a Lie group $G$ are in bijective
correspondence with invariant complex structures on the cotangent
group $T^{*}G$, which are compatible with the standard neutral
bi-invariant metric of $T^{*}G$ \cite[p.\,771]{ref1}. The proof
follows from the remark that the restriction of  the Courant
bracket to the space of invariant generalized vector fields (i.e.
invariant sections of $\mathbb{T}G$) coincides with the Lie
bracket in the Lie algebra $\gt^{*}(\gg )$ of the cotangent group
$T^{*}G.$

In this section we develop an infinitesimal description of
invariant generalized complex structures on Lie groups in terms of
the so called admissible pairs, which will be a main tool in this
paper.\footnote{ We thank the referee for pointing to us article
\cite{brett}, where an infinitesimal description of invariant
generalized complex structures on homogeneous manifolds $G/K$, in
terms of the so called generalized complex pairs (GC-pairs), was
developed. When $K=\{e\}$, a GC-pair is an admissible pair.}

Let $G$ be a real Lie group. On the generalized tangent bundle
$\mathbb{T}G$ we consider the natural action induced by left translations:
\[
g \cdot (X + \xi) := {(L_g)}_* X + \xi \circ (L_g^{-1})_{*}
,\quad\forall g\in G,\quad \forall X+\xi \in\mathbb{T}G.
\]

\begin{defn} A generalized almost complex structure $\mathcal J$ on $G$
is called {\bf invariant} if
\begin{equation}\label{sting}
{\mathcal J}(X+\xi )= g^{-1}\cdot {\mathcal J}\left( g\cdot
(X+\xi)\right) ,\quad\forall g\in G, \quad \forall X+\xi\in
\mathbb{T}G.
\end{equation}
\end{defn}

The holomorphic bundle $L = L(E, \tilde{\omega} )$ of an invariant generalized
almost complex structure $\mathcal J$ is determined by its fiber $L_{e}$
at the identity element $e\in G$,  i.e.
by a subspace  $\gk:= E_{e} \subset \gg^{\mathbb{C}}$
and a complex $2$-form $\omega := \tilde{\omega}_{e}\in \Lambda^{2}(\gk^{*})$.
It is easy to check that the bundle $E$ is involutive if and only if $\gk$ is a complex subalgebra of $\gg^{\mathbb{C}}$.
Moreover, if $E$ is involutive, then
$d_{E}\tilde{\omega} =0$ if and only if $\omega$ is closed as a $2$-form on the Lie algebra $\gk$, i.e. for any $X, Y, Z\in \gk $,
\[
(d_{\gk}\omega )(X, Y, Z) :=\omega ( X, [Y,Z]) +\omega (Z, [X, Y])
+\omega (Y, [Z, X]) =0.
\]
This last statement follows from the following computation:  for any $X, Y, Z\in E_{e}$ (viewed
either as left invariant sections of $E$ or elements of $\gk$),
\begin{align*}
(d_{E}\tilde{\omega})_{e}(X, Y, Z) &= X\left(\tilde{\omega}(Y, Z)\right) - Y\left( \tilde{\omega}(X, Z)\right)
+ Z\left( \tilde{\omega}(X, Y)\right)\\
& + \tilde{\omega} ( X, [Y,Z]) +\tilde{\omega} (Z, [X, Y])
+\tilde{\omega} (Y, [Z, X])\\
&= (d_{\gk}\omega )(X, Y, Z),
\end{align*}
where we used that $\tilde{\omega} (Y, Z)$ and the remaining similar terms are constant (from the  invariance of $\tilde{\omega}$).\\

To simplify terminology we introduce the following definition.

\begin{defn} Let $\gg$ be a real Lie algebra, given by the fixed point set of an antiinvolution
$\sigma (x) = \bar{x}$ of $\gg^{\mathbb{C}}.$
A pair $(\gk ,\omega )$ formed by a complex subalgebra $\gk\subset \gg^{\mathbb{C}}$ and a
closed $2$-form $\omega\in\Lambda^{2}(\gk^{*})$ is called {\bf $\gg$-admissible} if\\

i) $\gg^{\mathbb{C}} = \gk +\bar{\gk}$;\\

ii) $\omega_{\gl}:=\mathrm{Im}\left(\omega\vert_{\gl}\right)$ is a
symplectic form on $\gl = \gk\cap\gg$, i.e. it  is non-degenerate
(and closed).

\end{defn}

The following result, which is a corollary of  Proposition
\ref{cheie}, reduces the  description of invariant generalized
complex structures on a Lie group $G$ to the description of
$\gg$-admissible pairs.

\begin{thm}\label{20}
Let $G$ be a Lie group, with Lie algebra $\gg .$
There is a natural one to one correspondence between:

\begin{enumerate}
\item[i)]  invariant generalized complex structures on $G$;

\item[ii)] $\gg$-admissible pairs $(\gk ,\omega )$.

\end{enumerate}

More precisely, a $\gg$-admissible pair $(\gk ,\omega )$ defines an invariant
generalized complex structure $\mathcal J$, with holomorphic space at
$e\in G$ given by
\[
{\mathbb T}_{e}^{1,0}G= \gw^{1,0}:= \{ X+\xi\in \gk\oplus
(\gg^{\mathbb{C}})^{*}:\quad \xi\vert_{\gk}= \omega (X,\cdot )\}.
\]
\end{thm}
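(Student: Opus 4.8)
The plan is to establish the bijection in Theorem \ref{20} by upgrading the already-established local correspondence of Proposition \ref{cheie} to the invariant setting, using the fact that an invariant object is completely determined by its value at the identity $e \in G$. First I would note that an invariant generalized almost complex structure $\mathcal J$ on $G$ has holomorphic bundle $L$ that is itself left-invariant, so by Proposition \ref{cheie} it has the form $L = L(E, \tilde\omega)$, and the invariance of $\mathcal J$ forces both $E$ and $\tilde\omega$ to be invariant. Hence $E$ is determined by the single subspace $\gk := E_e \subset \gg^{\mathbb C}$, and $\tilde\omega$ by the single $2$-form $\omega := \tilde\omega_e \in \Lambda^2(\gk^*)$. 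Conversely, given any pair $(\gk, \omega)$, one left-translates to define an invariant subbundle $E$ with $E_e = \gk$ and an invariant form $\tilde\omega$ with $\tilde\omega_e = \omega$, and sets $L = L(E, \tilde\omega)$; this $L$ is automatically invariant, so it defines an invariant generalized almost complex structure via the isotropic-subbundle description recalled before Proposition \ref{cheie}. This gives a bijection between invariant generalized almost complex structures and pairs $(\gk, \omega)$ at the identity, and the explicit formula for $\gw^{1,0}$ is just $L_e$ written out.

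The remaining task is to match up the \emph{nondegeneracy} and \emph{integrability} conditions on both sides. For nondegeneracy, Proposition \ref{cheie} requires $\mathrm{Im}(\tilde\omega|_\Delta)$ to be nondegenerate, where $\Delta^{\mathbb C} = E \cap \bar E$. At the identity this reads $\Delta_e^{\mathbb C} = \gk \cap \bar{\gk}$, so $\Delta_e = \gl = \gk \cap \gg$ is exactly the real part of $\gk \cap \bar{\gk}$, and the condition becomes nondegeneracy of $\omega_\gl = \mathrm{Im}(\omega|_\gl)$ — which is condition (ii) of $\gg$-admissibility. The rank condition that $L$ have complex rank $n = \dim_{\mathbb R} G$ translates into $L_e \oplus \bar{L_e} = (\gg^{\mathbb C})^*$-plus-$\gg^{\mathbb C}$, and unwinding the definition of $L(E,\tilde\omega)$ shows this is equivalent to $\gk + \bar{\gk} = \gg^{\mathbb C}$, i.e. condition (i). I would verify this last equivalence by a short dimension count: the projection of $L_e$ to $\gg^{\mathbb C}$ has image $\gk$, and $L_e \oplus \bar{L_e}$ surjects onto $\gg^{\mathbb C}$ under the tangent projection precisely when $\gk + \bar{\gk} = \gg^{\mathbb C}$.

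For integrability, Proposition \ref{cheie} asks that $E$ be involutive and $d_E \tilde\omega = 0$. The two computations needed here are essentially already carried out in the excerpt just before Definition of admissibility: involutivity of the invariant bundle $E$ is equivalent to $\gk = E_e$ being a complex subalgebra of $\gg^{\mathbb C}$ (since the Lie bracket of invariant vector fields is again invariant, closure of sections under the bracket is the same as closure of $\gk$ under $[\cdot,\cdot]$), and the displayed identity $(d_E\tilde\omega)_e(X,Y,Z) = (d_\gk\omega)(X,Y,Z)$ shows that $d_E\tilde\omega = 0$ is equivalent to $\omega$ being closed as a Lie-algebra $2$-form on $\gk$. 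Both of these are exactly the defining data of a $\gg$-admissible pair (a complex subalgebra together with a closed form). Assembling the pieces, a pair $(\gk,\omega)$ is $\gg$-admissible if and only if the associated invariant $L(E,\tilde\omega)$ is the holomorphic bundle of an invariant generalized complex structure, which completes the correspondence.

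I expect the main obstacle to be purely bookkeeping rather than conceptual: one must carefully check that left-invariance of $\mathcal J$ (equation \eqref{sting}) really does force \emph{both} the subbundle $E$ and the form $\tilde\omega$ in its canonical presentation to be invariant, given that the decomposition $L = L(E,\tilde\omega)$ is canonically attached to $L$. This is the point where invariance of $\mathcal J$ must be transported through the $E$/$\tilde\omega$ formalism, and it requires knowing that $E$ and $\tilde\omega$ are canonically determined by $L$ (so that an invariant $L$ yields invariant $E$ and $\tilde\omega$) rather than merely that some presentation exists. Once that canonicity is in hand, everything else is a direct fiberwise translation of Proposition \ref{cheie} to $e \in G$.
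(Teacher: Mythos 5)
Your proof follows essentially the same route as the paper: Theorem \ref{20} is presented there as a corollary of Proposition \ref{cheie}, with exactly your fiberwise translation at the identity (invariance reduces $E$ and $\tilde{\omega}$ to $\gk = E_{e}$ and $\omega = \tilde{\omega}_{e}$; involutivity of $E$ corresponds to $\gk$ being a subalgebra; $d_{E}\tilde{\omega}=0$ corresponds to $d_{\gk}\omega =0$) carried out in the paragraphs immediately preceding the theorem. Your additional attention to the canonicity of the presentation $L = L(E,\tilde{\omega})$ and to the spanning condition $\gk + \bar{\gk} = \gg^{\mathbb{C}}$ fills in details the paper leaves implicit, but the argument is the same.
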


Theorem \ref{20} has the following important consequence.

\begin{cor}\label{quot} Let $\mathcal J$ be an invariant
generalized complex structure on a Lie group $G$, defined by a
$\gg$-admissible pair $(\gk ,\omega ).$ Suppose that the real Lie
algebra
\[
\gl =\gg \cap \gk \subset \gg
\]
generates a closed, connected Lie subgroup $L$ of $G$, such that
the homogeneous space $M= G/ L$ is reductive. Then $\mathcal J$
defines an invariant complex structure $J$ on  $M$.
\end{cor}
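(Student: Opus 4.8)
The plan is to build the invariant complex structure $J$ on $M = G/L$ by exhibiting the required decomposition of $\gm^{\mathbb{C}}$ directly from the admissible pair, and then invoke Proposition \ref{deadaugat1}. Recall that invariance and integrability of $J$ are, by that Proposition, equivalent to the existence of a subalgebra $\gk' \subset \gg^{\mathbb{C}}$ satisfying $\gk' + \bar{\gk'} = \gg^{\mathbb{C}}$ and $\gk' \cap \bar{\gk'} = \gl^{\mathbb{C}}$. The natural candidate is $\gk' = \gk$ itself, the subalgebra from the admissible pair. Thus the core of the argument is to verify these two conditions for $\gk$.

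\medskip

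First I would note that condition i) of $\gg$-admissibility already gives $\gk + \bar{\gk} = \gg^{\mathbb{C}}$, so the sum condition is immediate. The essential step is therefore to show that $\gk \cap \bar{\gk} = \gl^{\mathbb{C}}$, where $\gl = \gg \cap \gk$. The inclusion $\gl^{\mathbb{C}} \subseteq \gk \cap \bar{\gk}$ is straightforward: since $\gl \subseteq \gk$ and $\gl \subseteq \gg$ is fixed by $\sigma$, we have $\gl \subseteq \bar{\gk}$ as well, and complexifying preserves this. The reverse inclusion $\gk \cap \bar{\gk} \subseteq \gl^{\mathbb{C}}$ is where I expect the main subtlety. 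Here I would use that $\gk \cap \bar{\gk}$ is a $\sigma$-invariant complex subspace of $\gg^{\mathbb{C}}$ (being the intersection of $\gk$ with its own conjugate), so it is the complexification of its real part $(\gk \cap \bar{\gk})^{\sigma}$. One then identifies this real part with $\gl = \gg \cap \gk$: an element of $\gg \cap \gk$ lies in $\gk$ and is real, hence also in $\bar{\gk}$, giving $\gl \subseteq (\gk \cap \bar{\gk})^{\sigma}$; conversely a real element of $\gk \cap \bar{\gk}$ lies in $\gg$ and in $\gk$, hence in $\gl$. This matches $\gl$ precisely as the real part of $\gk \cap \bar{\gk}$ described in the admissible pair setup.

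\medskip

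With the two conditions of \eqref{def2} verified, Proposition \ref{deadaugat1} yields an invariant complex structure $J$ on $M = G/L$, provided the reductivity and closed-subgroup hypotheses hold, which are assumed in the statement. The decomposition $\gm^{\mathbb{C}} = \gm^{1,0} \oplus \gm^{0,1}$ is recovered by setting $\gm^{1,0}$ to be the projection of $\gk$ onto a chosen $\mathrm{Ad}_L$-invariant complement $\gm$ of $\gl$ in $\gg$; the subalgebra property of $\gk = \gl^{\mathbb{C}} + \gm^{1,0}$ is what encodes integrability.

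\medskip

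The step I expect to be the main obstacle is the reverse inclusion $\gk \cap \bar{\gk} \subseteq \gl^{\mathbb{C}}$, specifically the passage from a complex $\sigma$-invariant subspace to the complexification of its fixed-point set. The delicate point is confirming that $\gk \cap \bar{\gk}$ is genuinely $\sigma$-stable as a \emph{real} subspace so that the decomposition into real and imaginary parts stays inside it; this uses that $\sigma$ is an antilinear involution and that $\sigma(\gk) = \bar{\gk}$, so $\sigma(\gk \cap \bar{\gk}) = \bar{\gk} \cap \gk = \gk \cap \bar{\gk}$. Once this symmetry is secured, the identification with $\gl^{\mathbb{C}}$ follows formally, and the symplectic condition ii) on $\omega$ plays no role in producing $J$ itself—it is only the data that gets discarded when one forgets the generalized structure in favor of the honest complex structure.
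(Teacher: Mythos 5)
Your proof is correct and follows essentially the same route as the paper: both verify the two conditions of (\ref{def2}) for $\gk$ itself and then invoke Proposition \ref{deadaugat1} iii). The only difference is that the paper simply asserts $\gk\cap\bar{\gk}=\gl^{\mathbb{C}}$ without justification, whereas you supply the (correct) verification via the $\sigma$-invariance of $\gk\cap\bar{\gk}$ and the identification of its fixed-point set with $\gl$.
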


\begin{proof}
Since $\gk$ belongs to an admissible pair, $\gk +
\bar{\gk}=\gg^{\mathbb{C}}.$ Moreover, $\gk\cap\bar{\gk}=
\gl^{\mathbb{C}}.$ From Proposition \ref{deadaugat1} {\it iii)},
$\gk$ defines an invariant complex structure $J$ on $M$.

\end{proof}

We end this section with a property of admissible pairs, which
will be useful in our treatment of invariant generalized complex
structures on semisimple Lie groups.

\begin{prop}\label{n}
Let $\gg$ be a Lie algebra and $(\gk ,\omega )$ a $\gg$-admissible pair.
Suppose that $\gl = \gk\cap\gg$ is reductive. Then $\gl$ is
abelian.
\end{prop}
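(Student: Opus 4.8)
The plan is to turn admissibility into the statement that $\gl$ carries an invariant symplectic form and then use the cohomological rigidity of semisimple Lie algebras. First I would observe that $\omega_\gl = \mathrm{Im}(\omega\vert_\gl)$ is not merely non-degenerate but also \emph{closed} as a $2$-form on the real Lie algebra $\gl$. Indeed, $\gl = \gk\cap\gg$ is a subalgebra of $\gk$, so the restriction $\omega\vert_\gl$ of the closed form $\omega$ is again a (complex-valued) closed form, $d_\gl(\omega\vert_\gl) = (d_\gk\omega)\vert_\gl = 0$; since $\gl$ has real structure constants, $d_\gl$ is a real operator and commutes with taking imaginary parts, whence $d_\gl\omega_\gl = 0$. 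Thus $(\gl,\omega_\gl)$ is a symplectic Lie algebra, and the whole statement reduces to the assertion that a reductive symplectic Lie algebra is abelian.

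Writing the reductive decomposition $\gl = \mathfrak{z}\oplus\gs$ into its center $\mathfrak{z}$ and its semisimple part $\gs = [\gl,\gl]$, the next step is to show that this splitting is $\omega_\gl$-orthogonal. For a central $X$ and arbitrary $Y,Z\in\gl$ one has $[X,Y] = [Z,X] = 0$, so the cocycle identity $\omega_\gl(X,[Y,Z]) + \omega_\gl(Y,[Z,X]) + \omega_\gl(Z,[X,Y]) = 0$ collapses to $\omega_\gl(X,[Y,Z]) = 0$. Since $[\gl,\gl] = \gs$, bilinearity then gives $\omega_\gl(\mathfrak{z},\gs) = 0$. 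Consequently the matrix of $\omega_\gl$ is block-diagonal for the decomposition $\mathfrak{z}\oplus\gs$, and non-degeneracy of $\omega_\gl$ forces each block to be non-degenerate; in particular $\omega_\gl\vert_\gs$ is a symplectic form on the semisimple algebra $\gs$.

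It then remains to rule out a symplectic structure on a non-zero semisimple Lie algebra, which I expect to be the crux. Here I would invoke Whitehead's lemma $H^2(\gs) = 0$ to write $\omega_\gl\vert_\gs = d_\gs\beta$ for some $\beta\in\gs^*$. If $\dim\gs = 2m$, then $(\omega_\gl\vert_\gs)^m = d_\gs\bigl(\beta\wedge(d_\gs\beta)^{m-1}\bigr)$ is an exact top-degree form. But a semisimple Lie algebra is unimodular, i.e. $\mathrm{tr}(\mathrm{ad}\,X) = 0$ for all $X$, and this makes $d_\gs\colon\Lambda^{2m-1}(\gs^*)\to\Lambda^{2m}(\gs^*)$ the zero map; hence every exact top form vanishes and $(\omega_\gl\vert_\gs)^m = 0$. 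This contradicts non-degeneracy, for which the Liouville form $(\omega_\gl\vert_\gs)^m$ must be non-zero. Therefore $\gs = 0$ and $\gl = \mathfrak{z}$ is abelian.

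The principal obstacle is this last step, certifying that a non-trivial semisimple summand is incompatible with a symplectic form. Two points deserve care: first, the orthogonality argument must genuinely use only the centrality of $\mathfrak{z}$, so that non-degeneracy of $\omega_\gl\vert_\gs$ is \emph{derived} rather than assumed; and second, the passage from ``$\omega_\gl\vert_\gs$ is an exact $2$-cocycle'' to ``its top power vanishes'' rests precisely on the unimodularity of $\gs$ (equivalently, on $d_\gs$ annihilating forms of degree $\dim\gs - 1$), which is the one structural input that fails for a genuine reductive algebra and makes the abelian conclusion sharp.
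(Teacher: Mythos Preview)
Your proof is correct and follows the same overall scheme as the paper's: both reduce the proposition to the assertion that a reductive Lie algebra admitting a symplectic form is abelian (the paper isolates this as a separate lemma), and both establish the orthogonality $\omega_\gl(\mathfrak{z},\gs)=0$ via the cocycle identity in exactly the way you do, thereby forcing $\omega_\gl\vert_\gs$ to be non-degenerate on the semisimple part.

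The only genuine difference lies in the final contradiction. Having written $\omega_\gl\vert_\gs = d_\gs\beta$ via $H^2(\gs)=0$, the paper uses the Killing form $\langle\cdot,\cdot\rangle$ of $\gs$ to dualise $\beta$ to a vector $X_0\in\gs$ and computes directly
\[
\omega_\gl\vert_\gs(X_0,Y) = -\beta([X_0,Y]) = -\langle X_0,[X_0,Y]\rangle = \langle[X_0,X_0],Y\rangle = 0,
\]
so $X_0$ lies in the kernel; non-degeneracy gives $X_0=0$, hence $\beta=0$ and $\omega_\gl\vert_\gs=0$, contradicting $\gs\neq 0$. Your route through the Liouville form and unimodularity is equally valid and arguably more conceptual, since it isolates the obstruction as the non-vanishing of $H^{\dim\gs}(\gs)$ against exactness of the symplectic volume; the paper's argument is more direct and exploits a tool specific to the semisimple setting, namely the non-degenerate invariant bilinear form. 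Both arguments are short, and either would serve.
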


Proposition \ref{n} is a consequence of the following general statement
(applied to the reductive Lie algebra $\gl = \gk\cap \gg$, which
admits a symplectic form, namely $\omega_{\gl}: =\mathrm{Im}\left(
\omega\vert_{\gl}\right)$):

\begin{lem}\label{reductive-lem} Any (real or complex) reductive Lie
algebra which admits a symplectic form is abelian.
\end{lem}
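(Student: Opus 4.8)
The plan is to reduce the statement to the case of a semisimple Lie algebra, and then to rule out symplectic forms there by a cohomological argument. Write the reductive Lie algebra as a direct sum $\gl = \mathfrak{z}\oplus\gs$, where $\mathfrak{z}$ is the center of $\gl$ and $\gs = [\gl,\gl]$ is semisimple; both summands are ideals, and $\gg$ being abelian is equivalent to $\gs = 0$. So everything reduces to showing that the semisimple part $\gs$ vanishes.

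The first genuine step is to show that $\mathfrak{z}$ and $\gs$ are $\omega$-orthogonal. I would test the closedness of $\omega$ on triples with one central entry: for $Z_0\in\mathfrak{z}$ and $Y,W\in\gl$, since $[Z_0,Y]=[W,Z_0]=0$, the cocycle identity
\[
(d_{\gl}\omega)(Z_0,Y,W)=\omega(Z_0,[Y,W])+\omega(W,[Z_0,Y])+\omega(Y,[W,Z_0])
\]
collapses to $\omega(Z_0,[Y,W])=0$. As $[\gl,\gl]=\gs$, this yields $\omega(\mathfrak{z},\gs)=0$. Since $\omega$ is non-degenerate on $\gl=\mathfrak{z}\oplus\gs$ and the two summands are $\omega$-orthogonal, $\omega$ restricts to a non-degenerate form on each; in particular $\omega\vert_{\gs}$ is non-degenerate, and being the restriction of a closed form to the subalgebra $\gs$ it is again closed. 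Hence $\omega\vert_{\gs}$ is a symplectic form on the semisimple Lie algebra $\gs$.

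The main obstacle, and the crux of the argument, is to prove that a semisimple Lie algebra admits no symplectic form. Here I would invoke Whitehead's second lemma, $H^{2}(\gs)=0$ (valid over both $\mathbb{R}$ and $\mathbb{C}$), so that $\omega\vert_{\gs}=d\beta$ for some $\beta\in\gs^{*}$. Using the non-degeneracy of the Killing form $\kappa$ of $\gs$, represent $\beta=\kappa(B,\cdot)$ for a unique $B\in\gs$; then the coboundary formula together with the $\ad$-invariance of $\kappa$ gives $\omega(X,Y)=-\kappa([B,X],Y)$ for all $X,Y\in\gs$. Evaluating at $X=B$ shows $\omega(B,\cdot)=-\kappa([B,B],\cdot)=0$, so $B$ lies in the kernel of $\omega\vert_{\gs}$. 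Non-degeneracy forces $B=0$, whence $\beta=0$ and $\omega\vert_{\gs}=0$; this is possible only if $\gs=0$. Therefore $\gl=\mathfrak{z}$ is abelian. The entire argument is insensitive to the ground field, since Whitehead's lemma and the non-degeneracy and invariance of the Killing form hold in the real and complex cases alike, which is what the statement requires.
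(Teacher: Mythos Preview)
Your proof is correct and follows essentially the same route as the paper's: decompose into center plus semisimple part, use closedness to get $\omega$-orthogonality of the two summands, restrict to the semisimple part, write the restricted form as a coboundary via $H^{2}(\gs)=0$, dualize the potential through the Killing form, and observe that the resulting vector lies in the kernel of the symplectic form. The only differences are cosmetic (notation and your explicit naming of Whitehead's lemma where the paper cites a reference).
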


\begin{proof}
Let $\gl= \gc \oplus \gl^{s}$ be a reductive Lie algebra, with the
center $\gc$ and semisimple part $\gl^{s}.$ Suppose that $\omega
\in \Lambda^{2}(\gl^{*} )$ is a non-degenerate closed $2$-form on
$\gl$. Then $\gc$ is $\omega$-orthogonal to $\gl^s$. Indeed,
 for $Y, Z\in \gl^{{s}}$ and $X\in \gc$ , we get
 \[
0 =  d\omega (X, Y, Z) = \omega (X, [Y, Z]) +\omega ( Z, [X, Y])+\omega
( Y, [Z, X])=  \omega (X, [Y, Z]).
\]
Since $[\gl^s, \gl^s] = \gl^s$, the claim follows. Assume now that
$\gl^s \neq 0$. Then $\omega^s := \omega|_{\gl^s}$ is a
non-degenerate closed 2-form on  the semisimple Lie algebra
$\gl^s$. Being closed, $\omega^{s}$ is the differential of a
1-form $\xi \in (\gl^s)^*$
(see e.g. \cite[p.\,16]{oni}),  which is dual with respect to the
Killing form $\langle\cdot , \cdot\rangle$ of $\gl^{s}$ to a vector $X_0\in \gl^{s}$:
\[
\xi(X) = \langle X_0,X\rangle , \, \, \forall X \in \gl^s.
\]
For any $ Y \in \gl^s$ we  get
\[
\omega (X_{0}, Y) = d\xi (X_{0}, Y) = -\xi ([X_{0}, Y]) = -
\langle X_{0}, [X_{0}, Y]\rangle =  \langle [X_0,X_0], Y\rangle  =
0 .
\]
Hence  $X_0$ belongs to the kernel of $\omega^s$.
Since $\omega^{s}$ is non-degenerate,  $X_{0}=0$ and hence $\gl^{s}=0$.
We obtain a contradiction.

 \end{proof}

\subsection{A class of admissible pairs}

We have reduced  the description of invariant generalized complex
structures on a Lie group $G$, with Lie algebra $\gg$,  to the
description of $\gg$-admissible pairs $(\gk , \omega )$, and, in
particular, to the description of closed $2$-forms on $\gk .$ In
this section we find a special class of $\gg$-admissible pairs
$(\gk, \omega)$ (see Proposition \ref{special} below). We begin
with the following result which will be used repeatedly in our
description of invariant generalized complex structures on
semisimple Lie groups.

\begin{prop}\label{rho} Suppose that a Lie algebra $\gk$ admits a
semidirect  decomposition
\[
\gk = \gs \oplus \gp
\]
into a subalgebra $\gs$ and an ideal $\gp .$ Decompose a 2-form
$\rho \in \Lambda^2(\gk^*)$ on $\gk$   into three parts
\[
\rho = \rho_{0} +\rho_{1}+\rho_{2},
\]
where   $\rho_{0}\in \Lambda^{2}(\gs^{*})$ is the $\gs$-part,
$\rho_{1}\in\Lambda^{2}(\gp^{*})$ is the $\gp$-part  and
$\rho_{2}\in \gs^{*}\wedge \gp^{*}\subset\Lambda^{2}(\gk^{*})$  is
the mixed part of $\rho$ (all trivially extended  to $\gk$).
Then  the form $\rho$ is closed if and only if the following
conditions are satisfied:

\begin{enumerate}

\item the forms $\rho_0, \rho_1$  are closed  on $\gs$ and,
respectively, on $\gp$;

\item the following two conditions are satisfied:
\begin{equation}\label{c1}
\rho_{2}(s, [p, p^{\prime}])  = \rho_{1}([s, p], p^{\prime})
+\rho_{1}(p, [s,p^{\prime}])
\end{equation}
and
\begin{equation}\label{c2}
\rho_{2}([s, s^{\prime}], p) + \rho_{2}([s^{\prime}, p], s)
+\rho_{2}([p, s], s^{\prime})=0,
\end{equation}
for any $s, s^{\prime}\in \gs$ and $p, p^{\prime}\in \gp .$
\end{enumerate}

In particular, if $\gs$ is $\rho$-orthogonal to $\gp$, i.e. $\rho_{2}=0$,
then $\rho $  is  closed if and only if  its $\gs$-part $\rho_0$ is closed and
its  $\gp$-part  $\rho_{1}$ is closed   and
$\mathrm{ad}_{\gs}$-invariant.
\end{prop}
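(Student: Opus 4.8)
The plan is to exploit the vector-space splitting $\gk = \gs \oplus \gp$, which induces a grading of the exterior algebra: identifying $\gs^*$ and $\gp^*$ with the annihilators of $\gp$ and $\gs$ respectively, one has
\[
\Lambda^{3}(\gk^{*}) = \Lambda^{3}(\gs^{*}) \oplus \left(\Lambda^{2}(\gs^{*})\wedge\gp^{*}\right) \oplus \left(\gs^{*}\wedge\Lambda^{2}(\gp^{*})\right) \oplus \Lambda^{3}(\gp^{*}).
\]
Since $d_{\gk}\rho$ is alternating and trilinear, it vanishes if and only if it vanishes on every triple whose arguments are separately taken from $\gs$ or $\gp$, and by antisymmetry there are exactly four cases, indexed by how many arguments lie in $\gp$. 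Throughout I would use the defining formula $(d_{\gk}\rho)(X,Y,Z) = \rho(X,[Y,Z]) + \rho(Z,[X,Y]) + \rho(Y,[Z,X])$ together with the structural relations $[\gs,\gs]\subset\gs$, $[\gs,\gp]\subset\gp$ and $[\gp,\gp]\subset\gp$, which control into which summand each bracket falls; combined with the fact that $\rho_0$, $\rho_1$, $\rho_2$ are supported on $\gs\times\gs$, $\gp\times\gp$, $\gs\times\gp$ respectively, this isolates exactly one piece of $\rho$ in each case.

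First I would treat the two pure cases. For a triple $(s,s^{\prime},s^{\prime\prime})$ in $\gs$ all three brackets lie in $\gs$, so only $\rho_0$ contributes and $(d_{\gk}\rho)(s,s^{\prime},s^{\prime\prime}) = (d_{\gs}\rho_0)(s,s^{\prime},s^{\prime\prime})$; this vanishes for all such triples precisely when $\rho_0$ is closed on $\gs$. Symmetrically, for a triple $(p,p^{\prime},p^{\prime\prime})$ in $\gp$ all brackets lie in $\gp$, only $\rho_1$ survives, and $(d_{\gk}\rho)(p,p^{\prime},p^{\prime\prime}) = (d_{\gp}\rho_1)(p,p^{\prime},p^{\prime\prime})$, giving the closedness of $\rho_1$ on $\gp$. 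These two computations account for the first condition of the statement.

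Next I would handle the two mixed cases, which yield the conditions (\ref{c1}) and (\ref{c2}). For $(s,p,p^{\prime})$ the brackets $[p,p^{\prime}]$, $[s,p]$, $[p^{\prime},s]$ all lie in $\gp$, so the first term pairs $\gs$ with $\gp$ (giving $\rho_2$) and the remaining two pair $\gp$ with $\gp$ (giving $\rho_1$); expanding and using antisymmetry of $\rho_1$, the vanishing of $(d_{\gk}\rho)(s,p,p^{\prime})$ rearranges to exactly (\ref{c1}). For $(s,s^{\prime},p)$ one bracket, $[s,s^{\prime}]$, lies in $\gs$ while $[s^{\prime},p]$ and $[p,s]$ lie in $\gp$, so every term is of mixed type and only $\rho_2$ appears; collecting the three terms and matching signs shows that the vanishing of $(d_{\gk}\rho)(s,s^{\prime},p)$ is, up to an overall sign, precisely (\ref{c2}). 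These four equivalences together establish the main claim. The only real obstacle here is clerical: keeping track of the signs produced by the antisymmetry of $\rho_1$ and $\rho_2$ and by $[p,s]=-[s,p]$, so that the resulting expressions line up with the stated normal form of (\ref{c1}) and (\ref{c2}) rather than with a permuted variant.

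Finally, for the ``in particular'' statement I would specialize to $\rho_2 = 0$. Condition (\ref{c2}) then holds automatically, since each of its terms contains a $\rho_2$. Condition (\ref{c1}) collapses to $\rho_1([s,p],p^{\prime}) + \rho_1(p,[s,p^{\prime}]) = 0$ for all $s\in\gs$ and $p,p^{\prime}\in\gp$, which is exactly the statement that $\rho_1$ is annihilated by the derivation $\ad_{s}$ for every $s\in\gs$, i.e. that $\rho_1$ is $\ad_{\gs}$-invariant. Together with the unchanged closedness of $\rho_0$ and $\rho_1$ from the first condition, this gives the asserted criterion.
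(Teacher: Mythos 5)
Your proof is correct and complete: the paper itself omits the argument, stating only that ``the proof is straightforward,'' and your case-by-case evaluation of $d_{\gk}\rho$ on the four types of triples $(s,s',s'')$, $(s,s',p)$, $(s,p,p')$, $(p,p,p')$ is exactly that straightforward verification, with the signs handled correctly (including the overall sign in the $(s,s',p)$ case) and the correct specialization $\rho_2=0$ giving $\ad_{\gs}$-invariance of $\rho_1$.
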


\begin{proof}
The proof is straightforward.
\end{proof}

\begin{prop}\label{special}
Let $\gg$ be a real Lie algebra and $\sigma (x) = \bar{x}$ the
associated antiinvolution on $\gg^{\mathbb C}$. Suppose that a
complex subalgebra $\gk\subset \gg^{\mathbb{C}}$, with
$\gg^{\mathbb{C}}= \gk + \bar{\gk}$, has an ideal $\gp$,
complementary to the subalgebra $\gl^{\mathbb{C}}=\gk
\cap\bar{\gk}$, that is,
\[
\gk = \gl^{\mathbb{C}}\oplus \gp ,
\]
Then any (complex) $2$-form
$\omega\in\Lambda^{2}(\gk^{*})$ which defines  a $\gg$-admissible
 pair $(\gk ,\omega)$  is given by
\[
\omega = \widehat{\omega}_{0}+\rho_{1}+\rho_{2},
\]
where $\widehat{\omega}_{0}\in \Lambda^{2}(\gl^{\mathbb{C}})^{*}$
is a closed $2$-form on $\gl^{\mathbb{C}}$, such that $\mathrm{Im}\left( \widehat{\omega}_{0}\vert_{\gl}\right)$
is non-degenerate,  $\rho_{1}\in\Lambda^{2}(\gp^{*})$ is a closed 2-form on $\gp$ and
$\rho_{2}\in (\gl^{\mathbb C})^{*}\wedge \gp^{*}$ is a  2-form
which satisfies the following two conditions :
\[
\rho_{2}(l, [p, p^{\prime}])  = \rho_{1}([l, p], p^{\prime})
+\rho_{1}(p, [l,p^{\prime}])
\]
and
\[
\rho_{2}([l, l^{\prime}], p) + \rho_{2}([l^{\prime}, p], l)
+\rho_{2}([p, l], l^{\prime})=0,
\]
for any $l, l^{\prime}\in \gl^{\mathbb{C}}$ and $p, p^{\prime}\in
\gp .$
\end{prop}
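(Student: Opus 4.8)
The plan is to recognize this statement as a direct specialization of Proposition \ref{rho}, applied to the semidirect decomposition $\gk = \gl^{\mathbb{C}} \oplus \gp$ in which the role of the subalgebra $\gs$ is played by $\gl^{\mathbb{C}} = \gk \cap \bar{\gk}$ and the role of the ideal by the given complement $\gp$. First I would use the vector-space splitting $\gk = \gl^{\mathbb{C}} \oplus \gp$ to write the canonical three-part decomposition
\[
\omega = \widehat{\omega}_{0} + \rho_{1} + \rho_{2},
\]
where $\widehat{\omega}_{0} \in \Lambda^{2}((\gl^{\mathbb{C}})^{*})$ is the $\gl^{\mathbb{C}}$-part, $\rho_{1} \in \Lambda^{2}(\gp^{*})$ the $\gp$-part, and $\rho_{2} \in (\gl^{\mathbb{C}})^{*} \wedge \gp^{*}$ the mixed part (each trivially extended to $\gk$). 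This splitting is automatic and carries no content on its own; it merely records $\omega$ in the form that Proposition \ref{rho} refers to.

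Next, since $(\gk, \omega)$ is $\gg$-admissible, the form $\omega$ is closed by the very definition of admissibility. I would then invoke Proposition \ref{rho} for this closed $\omega$ relative to the decomposition above. Its conclusion, read off with $\gs = \gl^{\mathbb{C}}$ (so that the elements $s, s' \in \gs$ are relabelled $l, l' \in \gl^{\mathbb{C}}$), yields precisely that $\widehat{\omega}_{0}$ is closed on $\gl^{\mathbb{C}}$, that $\rho_{1}$ is closed on $\gp$, and that $\rho_{1}, \rho_{2}$ satisfy the two compatibility identities \eqref{c1} and \eqref{c2} — which are exactly the two displayed equations in the statement. Thus every assertion about the closedness of $\widehat{\omega}_{0}$ and about the conditions on $\rho_{1}$ and $\rho_{2}$ follows with no further computation.

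It then remains only to establish the non-degeneracy of $\mathrm{Im}\left(\widehat{\omega}_{0}\vert_{\gl}\right)$, and this is the one point that is not pure bookkeeping. The key observation is that $\omega$ and $\widehat{\omega}_{0}$ have the same restriction to $\gl$. Indeed $\gl \subset \gl^{\mathbb{C}}$, whereas $\rho_{1}$ vanishes unless both of its arguments lie in $\gp$ and $\rho_{2}$ vanishes whenever both of its arguments lie in $\gl^{\mathbb{C}}$; hence $\rho_{1}\vert_{\gl} = \rho_{2}\vert_{\gl} = 0$ and $\omega\vert_{\gl} = \widehat{\omega}_{0}\vert_{\gl}$. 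Taking imaginary parts gives $\mathrm{Im}\left(\widehat{\omega}_{0}\vert_{\gl}\right) = \mathrm{Im}\left(\omega\vert_{\gl}\right) = \omega_{\gl}$, which is non-degenerate by admissibility condition ii).

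The main — indeed essentially the only — obstacle is therefore organizational rather than computational: one must match the hypothesis that $\gp$ is a complementary \emph{ideal} to the exact form of the semidirect decomposition demanded by Proposition \ref{rho}, and then verify that neither the $\gp$-part nor the mixed part of $\omega$ interferes with the symplectic condition on the real subalgebra $\gl$. Once these two points are in place, the result is immediate from the earlier proposition.
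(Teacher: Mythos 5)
Your proposal is correct and is essentially the paper's own argument: the paper states Proposition \ref{special} as an immediate consequence of Proposition \ref{rho} (applied with $\gs = \gl^{\mathbb{C}}$ and the given ideal $\gp$), exactly as you do. Your explicit verification that $\rho_{1}\vert_{\gl} = \rho_{2}\vert_{\gl} = 0$, so that $\mathrm{Im}\left(\widehat{\omega}_{0}\vert_{\gl}\right) = \mathrm{Im}\left(\omega\vert_{\gl}\right)$ is non-degenerate by admissibility, is the one step the paper leaves implicit, and you have it right.
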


\section{Invariant generalized complex structures of regular type on
semisimple Lie groups}\label{regularsection}

In the remaining part of the paper we  define and study a class (called regular) of invariant
generalized complex structures on semisimple Lie groups.

\subsection{Regular $\gg$-admissible pairs: definition and general
results}\label{regularpairs}

Let $G$ be a real semisimple Lie group with Lie algebra $\gg$.

\begin{defn} A $\gg$-admissible pair $(\gk ,\omega )$ and
the associated  invariant  generalized complex structure $\mathcal
J$ on $G$ are called {\bf regular} if the complex  subalgebra
$\gk\subset\gg^{\mathbb{C}}$  is {\bf regular}, i.e. it is
normalized by a Cartan subalgebra of $\gg$.
\end{defn}

We denote by
\[
\sigma :\gg^{\mathbb{C}}\rightarrow\gg^{\mathbb{C}},\quad x\rightarrow\bar{x}
\]
the conjugation of $\gg^{\mathbb{C}}$ with respect to $\gg$. Let
$\gk$ be a regular subalgebra of $\gg^{\mathbb{C}}$. The
complexification  $\gh$ of the Cartan subalgebra $\gh_{\gg}$ of
$\gg$ which normalizes $\gk$ is a $\sigma$-invariant Cartan
subalgebra of $\gg^{\mathbb{C}}.$ Being regular, the subalgebra
$\gk$ is of the form
\begin{equation}\label{defk}
\gk = \gh_{\gk} + \gg (R_{0})
\end{equation}
where $\gh_{\gk}:=\gk \cap \gh$ and $R_{0}\subset R$ is a closed
subset of the root system $R$ of $\gg^{\mathbb{C}}$ relative to
$\gh .$ The condition $\gk +\bar{\gk}= \gg^{\mathbb{C}}$ from the
definition of admissible pairs is equivalent to
\[
R_{0}\cup \sigma (R_{0})= R,\quad \gh_{\gk}+\bar{\gh}_{\gk} = \gh
.
\]
To simplify terminology we introduce the following definition:

\begin{defn}\label{sigmapara} i) A subset $R_{0}\subset R$ is called {\bf
$\sigma$-parabolic} if it is closed and
\[
R_{0}\cup \sigma (R_{0}) = R.
\]
ii) A $\sigma$-parabolic subset $R_{0}\subset R$ is called a {\bf
$\sigma$-positive system} if it satisfies the additional condition
\[
R_{0}\cap \sigma (R_{0}) =\emptyset .
\]
iii) Two $\sigma$-parabolic subsets $R_0, R_0'$ are called {\bf
equivalent} if one of  them  can be  obtained  from the other by
transformations
 $R \to - R, \,\,  R \to \sigma(R)$  and a transformation  from the Weyl
  group of $R$, which commutes with $\sigma$.
 \end{defn}

We remark that if $\gg$ is a real form
of inner type of $\gg^{\mathbb{C}}$ and $\gh_{\gg}$ is a maximally
compact Cartan subalgebra of $\gg$, then
\begin{equation}\label{conditiegen}
\sigma (\alpha ) = -\alpha ,\quad\forall\alpha\in R
\end{equation}
and by a result of Bourbaki \cite{bourbaki} (Chapter VI, Section 1.7), $\sigma$-parabolic
subsets (respectively, $\sigma$-positive systems) of $R$ are just
parabolic subsets, that is, closed subsets which contain a
positive root system (respectively, positive root
systems).

\begin{lem}\label{regk} Let $\gk$ be the regular subalgebra
(\ref{defk}) of
$\gg^{\mathbb{C}}$, such that
\begin{equation}\label{suplimentara} \left( R_{0}\cap \sigma
(R_{0})\right)^{\mathrm{asym}} = \emptyset .
\end{equation}
Suppose that $\gk$ can be included into a
$\gg$-admissible pair $(\gk, \omega)$. Then
\[
\gk = \gl^{\mathbb{C}} + \mathfrak{a}^{1,0} + \gg (R_{0})
\]
where $R_{0}$ is a $\sigma$-positive system of $R$,
 $\gl := \gk \cap\gh_{\gg}$ and
$\mathfrak{a}^{1,0}$ is the holomorphic space of a complex
structure $J^{\mathfrak{a}}$ on a complement $\mathfrak{a}$ of
$\gl$ in $\gh_{\gg}$. In particular, the dimension of
$\mathfrak{a}$ is even.
\end{lem}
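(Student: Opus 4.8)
The plan is to read off the subalgebra $\gl^{\mathbb{C}}=\gk\cap\bar{\gk}$ attached to the admissible pair, to show that the extra hypothesis (\ref{suplimentara}) forces it to be reductive, and then to extract abelianness from Proposition \ref{n}. First I would compute $\gk\cap\bar{\gk}$ explicitly. Since $\gh$ is $\sigma$-invariant we have $\bar{\gk}=\sigma(\gh_{\gk})+\gg(\sigma(R_{0}))$, and comparing Cartan parts and root spaces gives
\[
\gl^{\mathbb{C}}=\gk\cap\bar{\gk}=\bigl(\gh_{\gk}\cap\sigma(\gh_{\gk})\bigr)+\gg(S),\qquad S:=R_{0}\cap\sigma(R_{0}).
\]
The hypothesis (\ref{suplimentara}) says exactly that $S^{\mathrm{asym}}=\emptyset$, i.e. $S=-S$ is a symmetric closed subsystem of $R$. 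For each $\alpha\in S$ both $\pm\alpha$ lie in $R_{0}$ and in $\sigma(R_{0})$, so $H_{\alpha}=[E_{\alpha},E_{-\alpha}]$ lies in $\gh_{\gk}\cap\sigma(\gh_{\gk})$; hence $\gl^{\mathbb{C}}$ contains the subalgebra generated by $\gg(S)$.

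The main step is to pass from reductivity to abelianness. Because $S$ is symmetric, $\gl^{\mathbb{C}}$ is a regular subalgebra whose root system is symmetric: splitting the Cartan part as $\gh_{\gk}\cap\sigma(\gh_{\gk})=\gh_{S}\oplus\mathfrak{z}$, where $\gh_{S}=\mathrm{span}_{\mathbb{C}}\{H_{\alpha}:\alpha\in S\}$ and $\mathfrak{z}$ is its intersection with the Killing-orthogonal complement of $\mathrm{span}(S)$, one obtains $\gl^{\mathbb{C}}=\mathfrak{z}\oplus\bigl(\gh_{S}+\gg(S)\bigr)$ with $\mathfrak{z}$ central and $\gh_{S}+\gg(S)$ semisimple. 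Thus $\gl^{\mathbb{C}}$, and hence its real form $\gl=\gk\cap\gg$ (recall $\gk\cap\bar{\gk}=\gl^{\mathbb{C}}$ from Corollary \ref{quot}), is reductive. Proposition \ref{n} then forces $\gl$ to be abelian, so $\gg(S)=0$, i.e. $S=\emptyset$. This is precisely $R_{0}\cap\sigma(R_{0})=\emptyset$; together with the $\sigma$-parabolicity already recorded before the lemma ($R_{0}$ closed and $R_{0}\cup\sigma(R_{0})=R$), it shows that $R_{0}$ is a $\sigma$-positive system. I expect this reductivity-to-abelian step to be the crux, and (\ref{suplimentara}) is used exactly here: an asymmetric part of $S$ would contribute a nilpotent piece and destroy reductivity, blocking the application of Proposition \ref{n}.

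With $S=\emptyset$ the root part of $\gl^{\mathbb{C}}$ vanishes, so $\gl^{\mathbb{C}}=\gh_{\gk}\cap\sigma(\gh_{\gk})\subset\gh$, and consequently $\gl=\gk\cap\gg=\gk\cap\gh_{\gg}$, matching the notation of the statement. It then remains to describe $\gh_{\gk}=\gk\cap\gh$. The conjugation $\sigma$ makes $\gh=(\gh_{\gg})^{\mathbb{C}}$ a complexified real vector space with real form $\gh_{\gg}$, and the complex subspace $\gh_{\gk}$ satisfies $\gh_{\gk}+\sigma(\gh_{\gk})=\gh$ (from $\gk+\bar{\gk}=\gg^{\mathbb{C}}$ restricted to $\gh$) and $\gh_{\gk}\cap\sigma(\gh_{\gk})=\gl^{\mathbb{C}}$; these are exactly conditions (\ref{def2}) for the abelian Lie algebra $\gh_{\gg}$ with subalgebra $\gl$. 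Applying Proposition \ref{deadaugat1} in this abelian setting yields a complement $\mathfrak{a}$ of $\gl$ in $\gh_{\gg}$, a complex structure $J^{\mathfrak{a}}$ on $\mathfrak{a}$, and the decomposition $\gh_{\gk}=\gl^{\mathbb{C}}+\mathfrak{a}^{1,0}$ with $\mathfrak{a}^{1,0}$ the holomorphic space of $J^{\mathfrak{a}}$. Substituting into (\ref{defk}) gives $\gk=\gl^{\mathbb{C}}+\mathfrak{a}^{1,0}+\gg(R_{0})$, and since $\mathfrak{a}$ carries the complex structure $J^{\mathfrak{a}}$ its dimension is even, which completes the argument.
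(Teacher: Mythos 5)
Your proof is correct and follows essentially the same route as the paper: compute $\gk\cap\bar{\gk}=\gh_{\gk}\cap\bar{\gh}_{\gk}+\gg\left(R_{0}\cap\sigma(R_{0})\right)$, use hypothesis (\ref{suplimentara}) to see this intersection is reductive, invoke the reductive-plus-symplectic-implies-abelian result to force $R_{0}\cap\sigma(R_{0})=\emptyset$ (hence $\sigma$-positivity), and then split the Cartan part to produce $\mathfrak{a}^{1,0}$. The only cosmetic differences are that the paper applies Lemma \ref{reductive-lem} directly rather than through Proposition \ref{n}, and it obtains the complex structure $J^{\mathfrak{a}}$ by choosing a complement $\gw$ of $\gh_{\gk}\cap\bar{\gh}_{\gk}$ in $\gh_{\gk}$ and noting that $\gw$, $\bar{\gw}$ are transverse, instead of citing Proposition \ref{deadaugat1} for the abelian algebra $\gh_{\gg}$.
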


\begin{proof} The complex conjugated subalgebra $\bar{\gk}$ has the form
\begin{equation}\label{gk-ad}
\bar{\gk}= \bar{\gh}_{\gk}+ \sigma (\gg(R_{0}) )
=\bar{\gh}_{\gk}+\gg (\sigma (R_{0})).
\end{equation}
From (\ref{gk-ad}),
\[
\gk \cap\bar{\gk} = \gh_{\gk}\cap\bar{\gh}_{\gk} +\gg (
R_{0}\cap\sigma (R_{0})).
\]
Condition (\ref{suplimentara}) means that $R_{0}\cap\sigma
(R_{0})$ is symmetric. Thus the  Lie algebra $\gk\cap\bar{\gk}$ is
reductive,  with semisimple part generated by $\gg
(R_{0}\cap\sigma (R_{0}))$ and the center which is the annihilator
of $R_{0}\cap\sigma (R_{0})$ in $\gh_{\gk}\cap\bar{\gh}_{\gk}$.
Since $\gk \cap \bar{\gk}$ is a reductive subalgebra with a
symplectic form, by  Lemma \ref{reductive-lem} it is commutative.
It follows that $R_{0}\cap \sigma (R_{0}) =\emptyset .$ On the
other hand, $\gk + \bar{\gk }= \gg^{\mathbb{C}}$ implies that
$R_{0}\cup \sigma (R_{0}) = R.$ We proved that $R_{0}$ is a
$\sigma$-positive system.

Let $\gw$ be a complement of $\gh_{\gk}\cap\bar{\gh}_{\gk}$ in
$\gh_{\gk}$. Since $\gh_{\gk} +\bar{\gh}_{\gk} =\gh$, $\gw
+\bar{\gw} = \mathfrak{a}^{\mathbb{C}}$ where $\mathfrak{a}$ is a
complement of $\gl$ in $\gh_{\gg}.$ Being transverse, $\gw$ and
$\bar{\gw}$ are the holomorphic and anti-holomorphic spaces of a
complex structure $J^{\mathfrak{a}}$ on $\mathfrak{a}.$

\end{proof}

Note that if $\sigma (\alpha ) = -\alpha$, for any $\alpha\in R$,
the condition (\ref{suplimentara}) is automatically satisfied.
Our main result in this section is stated as follows.

\begin{prop}\label{corAdmissiblepair}
Let $\gg$ be a real form of inner type of a
complex semisimple Lie algebra $\gg^{\mathbb C}$. Any regular
subalgebra of $\gg^{\mathbb C}$ which is normalized by a maximally
compact Cartan subalgebra $\gh_{\gg}$ of $\gg$ and can be included
in a $\gg$-admissible pair is of the form
\begin{equation}\label{defalgk}
\gk = \gl^{\mathbb{C}}+ \mathfrak{a}^{1,0} + \gg (R^{+})
\end{equation}
where $\gl := \gk\cap \gh_{\gg}$, $\mathfrak{a}^{1,0}$ is the
holomorphic space of a complex structure on a complement
$\mathfrak{a}$ of $\gl$ in $\gh_{\gg}$ and $R^{+}$ is a system of
positive roots of $\gg^{\mathbb{C}}$ relative to $\gh :=
(\gh_{\gg})^{\mathbb C}.$
\end{prop}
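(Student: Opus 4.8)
The plan is to obtain the proposition as a direct consequence of Lemma \ref{regk}, exploiting the special structure of inner-type real forms. Since $\gg$ is of inner type and $\gh_{\gg}$ is a maximally compact Cartan subalgebra, equation (\ref{conditiegen}) gives $\sigma(\alpha) = -\alpha$ for every $\alpha \in R$. Because $\gk$ is regular and normalized by $\gh_{\gg}$, it has the form (\ref{defk}), namely $\gk = \gh_{\gk} + \gg(R_0)$ with $\gh_{\gk} = \gk \cap \gh$ and $R_0 \subset R$ a closed subset; moreover, $\gk$ lying in a $\gg$-admissible pair forces $\gk + \bar{\gk} = \gg^{\mathbb{C}}$, i.e. $R_0 \cup \sigma(R_0) = R$.

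The first step is to check that the hypothesis (\ref{suplimentara}) of Lemma \ref{regk} is automatically fulfilled. Indeed, from $\sigma(\alpha) = -\alpha$ we get $\sigma(R_0) = -R_0$, so
\[
R_0 \cap \sigma(R_0) = R_0 \cap (-R_0) = R_0^{\mathrm{sym}},
\]
which is symmetric by construction; hence $\left(R_0 \cap \sigma(R_0)\right)^{\mathrm{asym}} = \emptyset$, exactly as observed in the remark following Lemma \ref{regk}. Therefore Lemma \ref{regk} applies verbatim and produces a decomposition $\gk = \gl^{\mathbb{C}} + \mathfrak{a}^{1,0} + \gg(R_0)$, in which $\gl = \gk \cap \gh_{\gg}$, the set $R_0$ is a $\sigma$-positive system of $R$, and $\mathfrak{a}^{1,0}$ is the holomorphic space of a complex structure $J^{\mathfrak{a}}$ on a complement $\mathfrak{a}$ of $\gl$ in $\gh_{\gg}$.

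It remains only to identify $R_0$ with an ordinary positive system. Under the condition $\sigma(\alpha) = -\alpha$, the notion of a $\sigma$-parabolic subset collapses to that of a parabolic subset, and correspondingly a $\sigma$-positive system is just a positive root system; this is precisely the Bourbaki characterization recorded in the remark after Definition \ref{sigmapara}. Consequently $R_0 = R^{+}$ for some system $R^{+}$ of positive roots of $\gg^{\mathbb{C}}$ relative to $\gh = (\gh_{\gg})^{\mathbb{C}}$, and substituting this into the decomposition above yields the asserted form (\ref{defalgk}).

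Since essentially all of the analytic content has already been extracted in Lemma \ref{regk} (which in turn rests on the vanishing argument of Lemma \ref{reductive-lem}), there is no genuine obstacle at this stage: the only point requiring care is the automatic verification of (\ref{suplimentara}), after which the statement is a formal specialization of Lemma \ref{regk} combined with the Bourbaki identification of $\sigma$-positive systems as positive root systems.
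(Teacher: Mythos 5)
Your proof is correct and follows exactly the route the paper intends: the paper states this proposition without a separate proof block precisely because it is meant to follow from Lemma \ref{regk}, the remark that condition (\ref{suplimentara}) is automatic when $\sigma(\alpha)=-\alpha$, and the Bourbaki identification of $\sigma$-positive systems with positive root systems in the inner case. Your write-up simply makes that implicit chain of reasoning explicit, including the correct observation that $R_0\cap\sigma(R_0)=R_0^{\mathrm{sym}}$ is symmetric.
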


In the next section we find all $2$-forms, which, together with
the subalgebra (\ref{defalgk}), form $\gg$-admissible pairs.

\subsection{Regular pairs on semisimple Lie groups
of inner type}\label{lace}

In this Section we assume that the real form $\gg = (\gg^{\mathbb
C})^{\sigma}$ of a complex semisimple Lie algebra $\gg^{\mathbb
C}$ is of inner type.  Then, preserving the
notations of Proposition \ref{corAdmissiblepair}, a regular
subalgebra $\gk \subset \gg^{\mathbb C}$ normalized by a maximally
compact Cartan subalgebra $\gh_{\gg}$ of $\gg$ and which is part
of a $\gg$-admissible pair $(\gk, \omega)$ has the form
\begin{equation}\label{k}
\gk =\gh_{0}+ \gg (R^{+})= \gl^{\mathbb{C}} +\mathfrak{a}^{1,0} +
\gg (R^{+}),
\end{equation}
where $R^{+}\subset R$ is a positive root system of $\gg^{\mathbb
C}$ relative to $\gh = (\gh_{\gg})^{\mathbb C}$ and
$\gh_{0}\subset \gh$ satisfies $\gh_{0} + \bar{\gh}_{0} = \gh .$
We now determine all $2$-forms $\omega\in\Lambda^{2}(\gk^{*})$
which, together with $\gk$, form a $\gg$-admissible pair. In the
following theorem we fix root vectors $\{ E_{\alpha}\}_{\alpha\in
R}$ of a Weyl basis of $\gg (R)$ and we denote by
$\omega_{\alpha}\in (\gg^{\mathbb{C}})^{*}$ the dual covectors
defined by
\begin{equation}\label{covectors-def}
\omega_{\alpha}\vert_{\gh}= 0,\quad\omega_{\alpha}(E_{\beta})
=\delta_{\alpha\beta}, \quad\forall\alpha ,\beta\in R.
\end{equation}

As usual $N_{\alpha\beta}$ will denote the structure constants,
defined by $[E_{\alpha}, E_{\beta}]= N_{\alpha\beta}E_{\alpha
+\beta}.$

\begin{thm}\label{main} Let $\gk \subset \gg^{\mathbb{C}}$ be the regular subalgebra
defined by (\ref{k}) and $\omega\in\Lambda^{2}(\gk^{*})$. Then
$(\gk, \omega)$ is a $\gg$-admissible pair if and only if
\begin{equation}\label{forma}
\omega = \widehat{\omega}_{0} +\sum_{\alpha\in
R^{+}}\mu_{\alpha}\alpha\wedge\omega_{\alpha}
+\frac{1}{2}\sum_{\alpha ,\beta\in R^{+}} \mu_{\alpha
+\beta}N_{\alpha\beta}\omega_{\alpha}\wedge\omega_{\beta} ,
\end{equation}
where $\mu_{\alpha}\in\mathbb{C}$, for any $\alpha\in R^{+}$,  and
$\widehat{\omega}_{0}\in \Lambda^{2}(\gh_{0}^{*})$ is any $2$-form
on $\gh_0$ (trivially extended to $\gk$), such that
$\mathrm{Im}\left( \widehat{\omega}_{0} \vert_{\gl}\right)$ is
non-degenerate.
\end{thm}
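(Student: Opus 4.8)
The plan is to apply Proposition \ref{special} to the semidirect decomposition $\gk = \gh_0 \oplus \gg(R^+)$, where $\gg(R^+)$ is a nilpotent ideal (since $R^+$ is a positive system, hence closed under addition, and $\gh_0$ normalizes each root space). Here $\gl^\mathbb{C} = \gk \cap \bar{\gk} = \gh_0$ plays the role of the complementary subalgebra $\gs$, and $\gp = \gg(R^+)$ is the ideal. Thus I would first verify that this is indeed a semidirect decomposition into a subalgebra and an ideal, so that the structure of Proposition \ref{special} applies: any admissible $\omega$ splits as $\omega = \widehat{\omega}_0 + \rho_1 + \rho_2$ with $\widehat{\omega}_0 \in \Lambda^2(\gh_0^*)$ closed (automatic, since $\gh_0$ is abelian) and satisfying the nondegeneracy condition on $\gl$, while $\rho_1 \in \Lambda^2(\gg(R^+)^*)$ and $\rho_2 \in \gh_0^* \wedge \gg(R^+)^*$ obey the two cocycle-type conditions \eqref{c1}, \eqref{c2}.

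The core of the argument is to solve these two conditions explicitly in terms of the root data. First I would write the mixed part as $\rho_2 = \sum_{\alpha \in R^+} \lambda_\alpha \wedge \omega_\alpha$ for some covectors $\lambda_\alpha \in \gh_0^*$, and the $\gg(R^+)$-part as $\rho_1 = \sum_{\alpha,\beta} c_{\alpha\beta}\, \omega_\alpha \wedge \omega_\beta$. Since $\gh_0$ acts on $E_\alpha$ by the eigenvalue $\alpha$ (viewed as a functional on $\gh$), condition \eqref{c1}, applied with $l \in \gh_0$ and root vectors $p = E_\alpha$, $p' = E_\beta$, relates the value $\rho_2(l, [E_\alpha, E_\beta]) = N_{\alpha\beta}\,\rho_2(l, E_{\alpha+\beta})$ to $\alpha(l)\,\rho_1(E_\alpha, E_\beta) + \beta(l)\,\rho_1(E_\alpha, E_\beta)$. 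Running this over all $l$ forces $\lambda_{\alpha+\beta} = \mu_{\alpha+\beta}\,(\alpha+\beta)$ for a scalar $\mu$, and simultaneously pins down $c_{\alpha\beta} = \tfrac{1}{2}\mu_{\alpha+\beta} N_{\alpha\beta}$, which is exactly the form displayed in \eqref{forma} once I identify $\lambda_\alpha = \mu_\alpha\,\alpha$. The factor $\tfrac{1}{2}$ and the symmetrization over $R^+$ come from the antisymmetry of $\omega_\alpha \wedge \omega_\beta$. Condition \eqref{c2}, with all three arguments suitably chosen, would then be checked to be automatically satisfied by this ansatz (it becomes a consistency relation among structure constants of the form $N_{\alpha\beta}$, verified via the Jacobi identity for the $N_{\alpha\beta}$).

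The main obstacle I anticipate is the bookkeeping in \eqref{c1}: I must be careful that $[E_\alpha, E_\beta]$ lands in $\gg(R^+)$ only when $\alpha + \beta \in R$, and contributes nothing (so the constraint is vacuous on that term) when $\alpha + \beta \notin R \cup \{0\}$; the case $\alpha + \beta = 0$ does not arise since $R^+$ contains no opposite pairs. Tracking which pairs $(\alpha,\beta)$ genuinely contribute, and confirming that the scalars $\mu_\alpha$ may be prescribed freely (i.e. that no further relations among them are imposed by \eqref{c1}–\eqref{c2} beyond what is already absorbed into the stated formula), is the delicate part. Conversely, for the "if" direction I would substitute the displayed $\omega$ back into the closedness conditions and check $d_\gk \omega = 0$ directly using $[E_\alpha, E_\beta] = N_{\alpha\beta} E_{\alpha+\beta}$ and the definition \eqref{covectors-def}, and separately verify that $\mathrm{Im}(\widehat{\omega}_0|_\gl)$ nondegenerate is precisely the admissibility condition $\omega_\gl$ symplectic, since the $\rho_1, \rho_2$ terms vanish on $\gl = \gk \cap \gg \subset \gh_\gg$ (the root-space contributions die on the Cartan part). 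This last point uses that $\gl \subset \gh_0$, so only $\widehat{\omega}_0$ survives upon restriction to $\gl$.
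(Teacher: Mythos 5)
Your overall strategy (split $\gk = \gh_{0}\oplus\gg (R^{+})$ into the abelian subalgebra $\gh_{0}$ and the ideal $\gg (R^{+})$, decompose $\omega = \widehat{\omega}_{0}+\rho_{1}+\rho_{2}$, and solve the mixed closedness conditions of Proposition \ref{rho}) is exactly the paper's, but the way you distribute the work between (\ref{c1}) and (\ref{c2}) contains a genuine gap. Condition (\ref{c1}) only produces equations of the form $N_{\alpha\beta}\,\lambda_{\alpha +\beta} = c_{\alpha\beta}\,(\alpha +\beta )\vert_{\gh_{0}}$, so it constrains $\lambda_{\gamma}$ only for roots $\gamma$ that decompose as a sum of two positive roots. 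For a \emph{simple} root $\gamma$ there is no pair $(\alpha ,\beta )\in R^{+}\times R^{+}$ with $\alpha +\beta =\gamma$, hence (\ref{c1}) says nothing about $\lambda_{\gamma}$, and your argument cannot conclude $\lambda_{\gamma}=\mu_{\gamma}\gamma$; without that, $\rho_{2}$ is not of the form claimed in (\ref{forma}). What forces $\lambda_{\alpha}\propto \alpha\vert_{\gh_{0}}$ for \emph{every} $\alpha\in R^{+}$ is condition (\ref{c2}): since $\gh_{0}$ is abelian, (\ref{c2}) reduces to $\rho_{2}(E_{\alpha},H)\,\alpha (H^{\prime}) = \rho_{2}(E_{\alpha},H^{\prime})\,\alpha (H)$, i.e.\ $\lambda_{\alpha}\wedge \alpha\vert_{\gh_{0}}=0$, which together with $\alpha\vert_{\gh_{0}}\not\equiv 0$ gives the proportionality. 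So (\ref{c2}) is emphatically not ``automatically satisfied'' and is not a Jacobi-type identity among the $N_{\alpha\beta}$ --- with $\gh_{0}$ abelian it contains no structure constants at all; it is the condition that creates the ansatz in the first place. The paper's order is the reverse of yours: (\ref{c2}) determines $\rho_{2}=\sum_{\alpha}\mu_{\alpha}\alpha\wedge\omega_{\alpha}$, and then (\ref{c1}) determines $\rho_{1}=\frac{1}{2}\sum N_{\alpha\beta}\mu_{\alpha +\beta}\omega_{\alpha}\wedge\omega_{\beta}$.

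Three further points. First, for pairs with $\alpha +\beta\notin R$ the constraint from (\ref{c1}) is \emph{not} vacuous: the left side vanishes but the right side is $(\alpha +\beta )(l)\,\rho_{1}(E_{\alpha},E_{\beta})$, so it forces $\rho_{1}(E_{\alpha},E_{\beta})=0$; without this you cannot exclude extra terms $c_{\alpha\beta}\,\omega_{\alpha}\wedge\omega_{\beta}$ with $\alpha +\beta\notin R$ from (\ref{forma}). Both this and the proportionality arguments rest on the lemma that $\alpha\vert_{\gh_{0}}$ and $(\alpha +\beta )\vert_{\gh_{0}}$ are not identically zero --- a consequence of $\gh_{0}+\bar{\gh}_{0}=\gh$ and $\sigma (\alpha )=-\alpha$, which is precisely where the inner-type/maximally-compact hypothesis enters; you use this implicitly (``running over all $l$'') but never establish it. Second, the Jacobi identity for the $N_{\alpha\beta}$ is indeed needed, but in the converse direction: one must verify that the $\rho_{1}$ given by the formula is closed on the non-abelian ideal $\gg (R^{+})$ (condition 1 of Proposition \ref{rho}), which is where the paper invokes Jacobi; your plan to ``check $d_{\gk}\omega =0$ directly'' must include this step, so the Jacobi identity belongs there, not in (\ref{c2}). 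Third, a small slip: $\gk\cap\bar{\gk}=\gl^{\mathbb{C}}$, which differs from $\gh_{0}=\gl^{\mathbb{C}}+\mathfrak{a}^{1,0}$ when $\mathfrak{a}\neq 0$, so it is Proposition \ref{rho} with subalgebra $\gs =\gh_{0}$, rather than Proposition \ref{special} taken literally, that applies.
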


\begin{proof}
We first show that a $2$-form on $\gk$ is closed if and only if it
is given by (\ref{forma}), for some constants $\mu_{\alpha}$. For
this, we apply Proposition \ref{rho}, with commutative subalgebra
$\gs :=\gh_{0}= \gl^{\mathbb{C}} +\ga^{1,0}$ and ideal $\gp := \gg
(R^{+}).$ Any $\omega\in\Lambda^{2}(\gk^{*})$ is given by
\[
\omega = \widehat{\omega}_{0} + \rho_{1}+ \rho_{2}
\]
where $\widehat{\omega}_{0}\in\Lambda^{2}(\gh_{0}^{*})$,
$\rho_{1}\in\Lambda^{2}(\gp^{*})$ and $\rho_{2}\in
\gh_{0}^{*}\wedge \gp^{*}$ are trivially extended to $\gk .$ Since
$\gh_{0}$ is abelian, $d_{\gh_{0}}\widehat{\omega}_{0}=0$ for any
$\widehat{\omega}_{0}$. From Proposition \ref{rho}, $d_{\gk}\omega
=0$ if and only if $d_{\gp}\rho_{1}=0$ and conditions (\ref{c1})
and (\ref{c2}) are satisfied. Since $\gh_{0}$ is abelian,
condition (\ref{c2}) becomes
\[
\rho_{2}( [H^{\prime}, E_{\alpha}], H) +\rho_{2}( [E_{\alpha}, H],
H^{\prime})=0, \quad \forall H, H^{\prime}\in\gh_{0},\quad
\forall\alpha\in R^{+}
\]
or
\begin{equation}\label{simplificat}
\rho_{2}( E_{\alpha}, H) \alpha (H^{\prime}) = \rho_{2}(
E_{\alpha}, H^{\prime})\alpha (H).
\end{equation}
On the other hand,  for any  $\alpha \in R$, the restriction
$\alpha\vert_{\gh_{0}}$ is not identically zero. (This is an easy
consequence of the equality  $\gh_{0} +\bar{\gh}_{0}=\gh$ and
 the  fact  that  $\sigma (\alpha )
= -\alpha$, when $\gg = (\gg^{\mathbb{C}})^{\sigma}$ is a compact
real form or a real form of inner type and $\gh_{\gg}$ is a
maximally compact Cartan subalgebra of $\gg$). Thus relation
(\ref{simplificat}) is equivalent to
\[
\rho_{2}(H,E_{\alpha}) =\mu_{\alpha}\alpha (H),\quad\forall \alpha
\in R^{+},\quad \forall H\in \gh_{0},
\]
for some constants $\mu_{\alpha}$, or
\begin{equation}\label{rho2}
\rho_{2} =\sum_{\alpha\in R^{+}} \mu_{\alpha}\alpha\wedge
\omega_{\alpha}.
\end{equation}
We proved that (\ref{c2}) is equivalent to (\ref{rho2}). We now
consider (\ref{c1}), which gives
\[
\rho_{2}(H, [E_{\alpha} , E_{\beta}]) =\rho_{1}( [H, E_{\alpha}],
E_{\beta}) +\rho_{1}(E_{\alpha} , [H, E_{\beta}]),\quad\forall
H\in \gh_{0},\quad\forall\alpha ,\beta \in R^{+},
\]
or, using (\ref{rho2}),
\begin{equation}\label{rho3}
N_{\alpha\beta } \mu_{\alpha +\beta} (\alpha +\beta )(H) = (\alpha
+\beta )(H) \rho_{1}(E_{\alpha}, E_{\beta}),\quad\forall H\in
\gh_{0},\quad\forall\alpha ,\beta\in R^{+}.
\end{equation}
A similar argument as above shows that $(\alpha
+\beta)\vert_{\gh_{0}}$ is not identically zero, for any $\alpha ,
\beta \in R^{+}$. Thus (\ref{rho3}) becomes
\begin{equation}\label{rho4}
\rho_{1}= \frac{1}{2} \sum_{\alpha ,\beta\in R^{+}}N_{\alpha\beta
} \mu_{\alpha +\beta}\omega_{\alpha}\wedge\omega_{\beta}.
\end{equation}
It remains to check that $\rho_{1}\in\Lambda^{2}(\gp^{*})$ defined
by (\ref{rho4}) is closed. For this, let $\alpha , \beta , \gamma
\in R^{+}$ be any positive roots. Then
\begin{align*}
(d_{\gp}\rho_{1})(E_{\alpha}, E_{\beta}, E_{\gamma
})&=\rho_{1}(E_{\alpha}, [E_{\beta}, E_{\gamma }])+
\rho_{1}(E_{\gamma}, [E_{\alpha}, E_{\beta }]) +
\rho_{1}(E_{\beta}, [E_{\gamma}, E_{\alpha}])\\
& = c_{\alpha +\beta +\gamma}(N_{\beta\gamma}N_{\alpha , \beta +
\gamma} + N_{\alpha\beta} N_{\gamma , \alpha + \beta} +
N_{\gamma\alpha} N_{\beta , \gamma +\alpha}).
\end{align*}
On the other hand, the Jacobi identity
\[
[E_{\alpha},[ E_{\beta}, E_{\gamma}]] + [E_{\gamma},[ E_{\alpha},
E_{\beta}]]+ [E_{\beta},[E_{\gamma}, E_{\alpha}]]=0
\]
implies that
\[
N_{\beta\gamma}N_{\alpha , \beta + \gamma} + N_{\alpha\beta}
N_{\gamma , \alpha + \beta} + N_{\gamma\alpha} N_{\beta , \gamma
+\alpha}=0.
\]
Therefore $\rho_{1}$ defined by (\ref{rho4}) is closed. We proved
that all closed  $2$-forms on $\gk$ are given by (\ref{forma}).
If, moreover, $\mathrm{Im}\left( \widehat{\omega}_{0}
\vert_{\gl}\right)$ is non-degenerate, then $(\gk ,\omega )$ is a
$\gg$-admissible pair.
\end{proof}

\begin{cor}\label{cor-main}
Let $G$ be a real semisimple Lie group of inner type.

\begin{enumerate}\item[i)]  If there is a regular generalized complex structure on $G$
then the rank of $G$ is even.

\item[ii)]  Assume that the rank of $G$ is even and let $\mathcal J$ be a
regular generalized complex structure on $G$, with associated
admissible pair $(\gk ,\omega )$, where
\begin{equation}\label{111}
\gk = \gl^{\mathbb{C}} +\mathfrak{a}^{1,0} + \gg (R^{+})
\end{equation}
and $\omega\in\Lambda^{2}(\gk^{*})$ are like in Theorem
\ref{main}. Then the type of $\mathcal J$ is given by
\[
\mathrm{type}({\mathcal J}) = \frac{1}{2}\left(
 \mathrm{rank}(G) - \mathrm{dim}(\gl ) + |R|\right)
\]
where $|R|$ is the number of roots from $R$.

\item[iii)] Assume that the rank of $G$ is even. Then, for any $k\in\mathbb{Z}$ such
that
\[
0\leq k\leq \frac{1}{2}\mathrm{rang}(G),
\]
there is a regular generalized complex structure $\mathcal J$ on
$G$ with
\begin{equation}\label{tipul}
\mathrm{type}({\mathcal J}) =\frac{1}{2} | R| + k.
\end{equation}
\end{enumerate}

\end{cor}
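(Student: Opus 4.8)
The plan is to read off everything from the structural description of $\gk$ in Proposition~\ref{corAdmissiblepair} together with the parametrization of $\omega$ in Theorem~\ref{main}, so that all three parts become dimension counts. Throughout I would work with the splitting $\gh_{\gg} = \gl \oplus \mathfrak{a}$, where $\gl = \gk \cap \gh_{\gg}$ and $\mathfrak{a}$ is the complement carrying the complex structure $J^{\mathfrak{a}}$; recall $\dim \gh_{\gg} = \mathrm{rank}(G)$.

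For \emph{i)} I would show both summands are even-dimensional. Since $\gl$ is a subspace of the abelian Lie algebra $\gh_{\gg}$ it is abelian, and admissibility forces $\mathrm{Im}\left(\omega\vert_{\gl}\right)$ to be a symplectic form on $\gl$ (this is also the content of Proposition~\ref{n} applied to $\gl$); a vector space with a non-degenerate alternating form has even dimension, so $\dim \gl$ is even. On the other hand, Lemma~\ref{regk} gives that $\dim \mathfrak{a}$ is even, because $\mathfrak{a}$ admits the complex structure $J^{\mathfrak{a}}$. Hence $\mathrm{rank}(G) = \dim \gl + \dim \mathfrak{a}$ is even.

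For \emph{ii)} I would use that the type of $\mathcal J$ is the complex codimension of $E$, whose fibre at $e$ is $\gk$, inside $T^{\mathbb{C}}_{e}G = \gg^{\mathbb{C}}$, so $\mathrm{type}(\mathcal J) = \dim_{\mathbb{C}}\gg^{\mathbb{C}} - \dim_{\mathbb{C}}\gk$. Then I would simply add up complex dimensions using (\ref{111}): one has $\dim_{\mathbb{C}}\gg^{\mathbb{C}} = \mathrm{rank}(G) + |R|$, while $\dim_{\mathbb{C}}\gl^{\mathbb{C}} = \dim \gl$, $\dim_{\mathbb{C}}\mathfrak{a}^{1,0} = \tfrac{1}{2}(\mathrm{rank}(G) - \dim \gl)$ and $\dim_{\mathbb{C}}\gg(R^{+}) = \tfrac{1}{2}|R|$. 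Subtracting and simplifying yields exactly $\tfrac{1}{2}\left(\mathrm{rank}(G) - \dim \gl + |R|\right)$.

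For \emph{iii)} the formula of \emph{ii)} rewrites as $\mathrm{type}(\mathcal J) = \tfrac{1}{2}|R| + \tfrac{1}{2}\dim \mathfrak{a}$, so it suffices to realize every even value $\dim \mathfrak{a} = 2k$ with $0 \le 2k \le \mathrm{rank}(G)$. Given such a $k$, I would fix a positive system $R^{+}$, choose any splitting $\gh_{\gg} = \gl \oplus \mathfrak{a}$ with $\dim \mathfrak{a} = 2k$ (so $\dim \gl = \mathrm{rank}(G) - 2k$, which is even since $\mathrm{rank}(G)$ is even), equip $\mathfrak{a}$ with a complex structure $J^{\mathfrak{a}}$, and form $\gk$ as in (\ref{111}). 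To complete the pair, by Theorem~\ref{main} I only need a $2$-form $\widehat{\omega}_{0}$ on $\gh_{0} = \gl^{\mathbb{C}} + \mathfrak{a}^{1,0}$ (taking all $\mu_{\alpha}=0$) with $\mathrm{Im}\left(\widehat{\omega}_{0}\vert_{\gl}\right)$ non-degenerate: choosing any real symplectic form $\Omega$ on the even-dimensional $\gl$, extending it complex-bilinearly to $\gl^{\mathbb{C}}$, multiplying by $i$, and extending by zero along $\mathfrak{a}^{1,0}$ gives $\mathrm{Im}\left(\widehat{\omega}_{0}\vert_{\gl}\right) = \Omega$. This produces a regular generalized complex structure of type $\tfrac{1}{2}|R| + k$. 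The computations are otherwise routine; the only point demanding care is the parity bookkeeping, namely checking that $\dim \gl$ is forced even (so that the symplectic form $\Omega$, and hence an admissible $\omega$, exists) and that the attainable values $\tfrac{1}{2}\dim \mathfrak{a}$ are precisely $0,1,\dots,\tfrac{1}{2}\mathrm{rank}(G)$, matching the stated range of $k$.
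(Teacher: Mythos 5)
Your proposal is correct and follows essentially the same route as the paper's proof: part \emph{i)} from the even-dimensionality forced by the symplectic form on $\gl$ and the complex structure $J^{\mathfrak{a}}$ on $\mathfrak{a}$, part \emph{ii)} by computing $\dim_{\mathbb{C}}\gk$ against $\dim_{\mathbb{C}}\gg^{\mathbb{C}}$, and part \emph{iii)} by choosing $\dim\gl = \mathrm{rank}(G)-2k$ and invoking Theorem \ref{main}. You merely make explicit what the paper leaves as a remark, namely the dimension count in \emph{ii)} and the existence of an admissible $\widehat{\omega}_{0}$ (via a real symplectic form on the even-dimensional $\gl$) in \emph{iii)}.
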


\begin{proof}
Let $\mathcal J$ be a regular generalized complex structure on $G$,
with admissible pair $(\gk , \omega )$,
like in Theorem \ref{main}.
Since $\gl$ admits a symplectic form, it has
even dimension. Since the complement $\mathfrak{a}$
of $\gl$ in $\gh_{\gg}$ admits a complex structure, it
has also even dimension. Thus, the rank of $G$ is even. This proves
the first claim. The second claim follows from the definition of the
type and by computing the dimension of $\gk .$ For the third claim, we notice that the generalized complex structure $\mathcal J$
associated to an admissible pair $(\gk ,\omega )$, with $\gk$ given by
(\ref{111}),  satisfies (\ref{tipul}), provided that
the dimension of $\gl$ is equal to
$\mathrm{rank}(G) - 2k .$ Since the rank of $G$ is even, $\gl$ can be chosen with this property.
\end{proof}

For simplicity, the following proposition is stated for compact
forms, but it holds also for any real form of inner type. We preserve
the notations from Theorem \ref{main}.

\begin{prop}\label{B-field}
Let $\mathcal J$ be a regular generalized complex structure on a
compact semisimple Lie group $G$, with associated $\gg$-admissible
pair $(\gk ,\omega )$ defined by (\ref{k}) and (\ref{forma}).
Define a covector $\xi\in \gg^{*}$ by
\[
\xi := -\sum_{\alpha\in R^{+}}\mu_{\alpha}\omega_{\alpha}
+\sum_{\alpha\in R^{+}}\bar{\mu}_{\alpha}\omega_{-\alpha}
\]
and let $B := d\xi$. Then $\mathcal J$ is the $B$-field
transformation of the regular generalized complex structure
$\widehat{\mathcal J}$ whose associated $\gg$-admissible pair is
$(\gk ,\widehat{\omega}_{0}).$ Moreover, $\widehat{\mathcal J}$ is
in normal form if and only if
\[
i_{H}\widehat{\omega}_{0}=0, \quad\forall H\in \mathfrak{a}^{1,0}.
\]
\end{prop}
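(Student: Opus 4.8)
The plan is to prove the two assertions separately, both by explicit computation at the identity $e\in G$, exploiting that every object in sight is left-invariant so that the statements on $\mathbb{T}G$ reduce to linear algebra on $\gg^{\mathbb C}$.

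\emph{The $B$-field assertion.} First I would check that $\xi$ is a genuine \emph{real} invariant $1$-form, so that $B=d\xi$ is a closed invariant real $2$-form, i.e.\ an admissible $B$-field. Since $\gg$ is compact, $\sigma=\tau$ with $\tau(E_\alpha)=-E_{-\alpha}$, whence the conjugate covectors satisfy $\overline{\omega_\alpha}=-\omega_{-\alpha}$ (both sides vanish on $\gh$ and agree on each $E_\beta$). Applying the antilinear $\sigma$ to the defining formula for $\xi$ then gives $\bar\xi=\xi$, so $\xi\in\gg^{*}$. Being left-invariant, $B=d\xi$ is the invariant $2$-form with $B(X,Y)=-\xi([X,Y])$ at $e$, and $dB=d^{2}\xi=0$. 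The heart of this part is the identity $B|_{\gk}=\omega-\widehat{\omega}_0$; granting it, the $B$-field transformation recalled in Section~\ref{prel1}, applied to the invariant bundles built on the distribution $E$ with $E_e=\gk$ and on the invariant extensions of $\widehat{\omega}_0$ and $\omega$, yields $\exp(B)\cdot\widehat{\mathcal J}=\mathcal J$, because at $e$ the restriction $i^{*}B$ equals $B|_{\gk}$ and thus $\widehat{\omega}_0+B|_{\gk}=\omega$.

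To prove the identity I would evaluate $B(X,Y)=-\xi([X,Y])$ on the three types of pairs spanning $\gk=\gh_0\oplus\gg(R^{+})$, first recording $\xi(E_\gamma)=-\mu_\gamma$ for $\gamma\in R^{+}$ (the $\omega_{-\beta}$ terms drop since $-\gamma\notin R^{+}$). Then: on $\gh_0\times\gh_0$ both sides vanish as $\gh_0$ is abelian; on $\gh_0\times\gg(R^{+})$, from $[H,E_\alpha]=\alpha(H)E_\alpha$ one gets $B(H,E_\alpha)=\mu_\alpha\alpha(H)$, matching the $\mu_\alpha\,\alpha\wedge\omega_\alpha$ term of (\ref{forma}); and on $\gg(R^{+})\times\gg(R^{+})$, from $[E_\alpha,E_\beta]=N_{\alpha\beta}E_{\alpha+\beta}$ one gets $B(E_\alpha,E_\beta)=N_{\alpha\beta}\mu_{\alpha+\beta}$, which matches the double sum once $N_{\beta\alpha}=-N_{\alpha\beta}$ is used to collapse the antisymmetrization. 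This is routine but is the main computational step of the first half.

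\emph{The normal-form assertion.} Here I would exhibit the candidate $f$-structure directly from the root decomposition. Since $R^{+}$ is $\sigma$-positive, $\gk\cap\bar{\gk}=\gl^{\mathbb C}$, so the real distribution $\Delta$ of $\widehat{\mathcal J}$ has $\Delta^{\mathbb C}_e=\gl^{\mathbb C}$; I set $T^{0}M_e=\gl^{\mathbb C}$, $T^{1,0}M_e=\mathfrak{a}^{1,0}\oplus\gg(R^{+})$ and $T^{0,1}M_e=\sigma(T^{1,0}M_e)=\mathfrak{a}^{0,1}\oplus\gg(R^{-})$. Their sum is $\gl^{\mathbb C}\oplus\mathfrak{a}^{\mathbb C}\oplus\gg(R)=\gg^{\mathbb C}$ with $T^{1,0}M_e\cap T^{0,1}M_e=0$, so this is the eigenspace decomposition of an invariant real $f$-structure $F$ with $E=T^{0}M\oplus T^{1,0}M$. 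Since the pair $(E,\tilde\omega)$ attached to a holomorphic bundle is unique (Proposition \ref{cheie}), $\widehat{\mathcal J}$ is in normal form for this $F$ exactly when $\widehat{\omega}_0=i\eta_{\mathbb C}|_{\gk}$ for a real $\eta$ compatible with $F$, i.e.\ with $\ker\eta_{\mathbb C}=T^{1,0}M\oplus T^{0,1}M$ and $\eta_{\mathbb C}|_{T^{0}M}$ non-degenerate. On $\gg(R^{+})$ the form $\widehat{\omega}_0$ vanishes automatically, so the sole obstruction to $\widehat{\omega}_0$ annihilating $T^{1,0}M_e$ is its behaviour on $\mathfrak{a}^{1,0}$: the requirement $T^{1,0}M_e\subset\ker(i\eta_{\mathbb C})$ forces $i_H\widehat{\omega}_0=0$ for $H\in\mathfrak{a}^{1,0}$, and conversely this makes $\widehat{\omega}_0$ vanish whenever an argument lies in $T^{1,0}M_e\oplus T^{0,1}M_e$. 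On $T^{0}M_e=\gl^{\mathbb C}$ one recovers $\eta$ from the non-degenerate $\mathrm{Im}(\widehat{\omega}_0|_{\gl})$, declaring $\eta|_{\gl}:=\mathrm{Im}(\widehat{\omega}_0|_{\gl})$ and $\eta=0$ on the complement, and then checking that $i\eta_{\mathbb C}$ reproduces $\widehat{\omega}_0$ on $\gl^{\mathbb C}$.

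The step I expect to be most delicate is precisely this last matching on the symplectic direction $\gl^{\mathbb C}$: one must verify that the real, invariant $\eta$ built from $\mathrm{Im}(\widehat{\omega}_0|_{\gl})$ indeed satisfies $i\eta_{\mathbb C}|_{\gl^{\mathbb C}}=\widehat{\omega}_0|_{\gl^{\mathbb C}}$, which is where the reality of $\eta$ interacts with the complex-valued $\widehat{\omega}_0$. By contrast, the vanishing $i_H\widehat{\omega}_0=0$ on $\mathfrak{a}^{1,0}$ is the clean algebraic condition that separates the complex-structure directions $\mathfrak{a}^{1,0}\oplus\gg(R^{+})$ from the symplectic direction $\gl$, and it is exactly this separation that the normal form demands. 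Combining the two halves gives the stated equivalence.
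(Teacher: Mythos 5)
Your proof of the first claim is correct and follows the paper's own route: the paper likewise checks that $\xi$ is real on $\gg$ and records the identity $B\vert_{\gk}=\sum_{\alpha\in R^{+}}\mu_{\alpha}\,\alpha\wedge\omega_{\alpha}+\frac{1}{2}\sum_{\alpha ,\beta\in R^{+}}\mu_{\alpha +\beta}N_{\alpha\beta}\,\omega_{\alpha}\wedge\omega_{\beta}$ (its equation (\ref{xi-d})), i.e. $B\vert_{\gk}=\omega-\widehat{\omega}_{0}$; your evaluation of $-\xi([\cdot ,\cdot ])$ on the pairs $\gh_{0}\times\gh_{0}$, $\gh_{0}\times\gg(R^{+})$ and $\gg(R^{+})\times\gg(R^{+})$ just makes explicit what the paper states as ``it can be checked''.

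The normal-form claim is where your argument has a genuine gap, and it sits exactly at the step you flagged as delicate. By the uniqueness in Proposition \ref{cheie}, normal form means: $\gk=T^{0}\oplus T^{1,0}$ and $\widehat{\omega}_{0}=i\eta_{\mathbb{C}}\vert_{\gk}$ for some almost $f$-structure and some \emph{real} compatible $2$-form $\eta$; moreover $T^{0}=\gk\cap\bar{\gk}=\gl^{\mathbb{C}}$ is forced, whatever $f$-structure is used. Restricting the required identity to real vectors $X,Y\in\gl$, the right-hand side $i\eta (X,Y)$ is purely imaginary, whereas $\widehat{\omega}_{0}(X,Y)$ has in general a non-zero real part: Theorem \ref{main} constrains only $\mathrm{Im}(\widehat{\omega}_{0}\vert_{\gl})$ (non-degeneracy) and leaves $\mathrm{Re}(\widehat{\omega}_{0}\vert_{\gl})$ completely free. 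Hence your $\eta$, built by declaring $\eta\vert_{\gl}:=\mathrm{Im}(\widehat{\omega}_{0}\vert_{\gl})$, satisfies $i\eta_{\mathbb{C}}\vert_{\gl^{\mathbb{C}}}=\widehat{\omega}_{0}\vert_{\gl^{\mathbb{C}}}$ only when $\mathrm{Re}(\widehat{\omega}_{0}\vert_{\gl})=0$, and no other real $\eta$ can do better, since the purely-imaginary constraint on $\gl\times\gl$ is unavoidable. The defect is not reparable within the stated hypotheses: take $\gl=\gh_{\gg}$ (rank of $G$ even), $\mathfrak{a}=0$ and $\widehat{\omega}_{0}=(1+i)\nu_{\mathbb{C}}$ with $\nu$ a real symplectic form on $\gl$; then the condition $i_{H}\widehat{\omega}_{0}=0$ for $H\in\mathfrak{a}^{1,0}$ holds vacuously, yet $\widehat{\mathcal J}$ is not in normal form. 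So the ``if'' direction cannot be completed as written; the equivalence needs the extra hypothesis $\mathrm{Re}(\widehat{\omega}_{0}\vert_{\gl})=0$. (The paper's own proof of this half is the single sentence ``The second claim is straightforward'', so this is an issue the paper glosses over rather than resolves.)

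There is also a quantifier slip in your ``only if'' direction: you prove that $\widehat{\mathcal J}$ is in normal form \emph{with respect to the particular} $F$ having $T^{1,0}=\mathfrak{a}^{1,0}\oplus\gg(R^{+})$ exactly when $i_{H}\widehat{\omega}_{0}=0$ on $\mathfrak{a}^{1,0}$, but the definition of normal form is existential over $f$-structures. Any realizing $F$ must have $T^{1,0}=\mathrm{Ker}(\widehat{\omega}_{0}\vert_{\gk})$ (use $\widehat{\omega}_{0}=i\eta_{\mathbb{C}}\vert_{\gk}$ and the non-degeneracy of $\eta_{\mathbb{C}}\vert_{T^{0}}$), and this kernel always contains $\gg(R^{+})$ and is a complement of $\gl^{\mathbb{C}}$ in $\gk$, but it need not contain $\mathfrak{a}^{1,0}$; ruling that out (or re-choosing the complement $\mathfrak{a}^{1,0}$ inside $\gh_{0}$, which the proposition fixes in advance) is a step your argument does not address.
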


\begin{proof} The compact real form $\gg$ is
given by
\[
\gg =i\gh ({\mathbb R})+\sum_{\alpha\in R}\mathbb{R}(E_{\alpha} -
E_{-\alpha}) +i \sum_{\alpha\in R}{\mathbb R}(E_{\alpha}
+E_{-\alpha}).
\]
It can be checked that $\xi$ takes real values on $\gg$ and
\begin{equation}\label{xi-d}
B\vert_{\gk}= \sum_{\alpha\in
R^{+}}\mu_{\alpha}\alpha\wedge\omega_{\alpha} +\frac{1}{2}
\sum_{\alpha, \beta\in R^{+}}\mu_{\alpha +\beta}N_{\alpha\beta}
\omega_{\alpha}\wedge\omega_{\beta}.
\end{equation}
From (\ref{forma}) and (\ref{xi-d}), $\mathcal J$ is the $B$-field
transformation of $\widehat{\mathcal J}.$ The second claim is
straightforward.
\end{proof}

\subsection{Invariant generalized complex structures on compact semisimple Lie
groups}\label{igcssc}

We now show that any invariant generalized complex structure
$\mathcal J$ on a compact semisimple Lie group $G$ is regular,
provided that $\mathcal J$ satisfies an additional natural
condition. More precisely, we prove:

\begin{thm}\label{sumarizat}
Let $G$ be a compact semisimple Lie group, with Lie algebra $\gg$,
and $\mathcal J$ an invariant generalized complex structure on $G$
defined by a $\gg$-admissible pair $(\gk ,\omega ).$ Suppose that
$\gl := \gk \cap \gg$ generates a closed subgroup $L$ of $G$. Then
$\mathcal J$ is regular.
\end{thm}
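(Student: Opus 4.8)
The plan is to realize $\mathcal{J}$ as an invariant \emph{complex} structure on a torus quotient of $G$, and then to apply Wang's classification (Theorem \ref{wang}) to conclude that the defining subalgebra $\gk$ is adapted to a root-space decomposition, which is precisely regularity.

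First I would show that $\gl = \gk \cap \gg$ is abelian. Since $G$ is compact, $\gg$ admits an $\Ad(G)$-invariant inner product, whose restriction to the subalgebra $\gl$ is $\ad_\gl$-invariant; hence $\gl$ is a compact, in particular reductive, Lie algebra. As $(\gk,\omega)$ is admissible, $\gl$ carries the symplectic form $\omega_\gl = \mathrm{Im}(\omega|_\gl)$, so Proposition \ref{n} forces $\gl$ to be abelian. The connected subgroup $L$ with Lie algebra $\gl$ is closed by hypothesis, hence a torus, and $M = G/L$ is reductive (take $\gm = \gl^\perp$ for the invariant inner product).

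Next I would pass to the quotient. By Corollary \ref{quot}, $\mathcal{J}$ induces an invariant complex structure $J$ on $M = G/L$, and by Proposition \ref{deadaugat1} its holomorphic subalgebra is exactly $\gk$, with $\gk \cap \bar{\gk} = \gl^{\mathbb{C}}$. Now Theorem \ref{wang} ii) applies: every invariant complex structure on a homogeneous manifold of a compact connected semisimple Lie group arises from Wang's construction, in which the holomorphic subalgebra has the form $\gk = \gh_{\gk} + \gg(R_0)$ for some Cartan subalgebra $\gh_{\gg} \subset \gg$ with complexification $\gh$, where $\gh_{\gk} = \gk \cap \gh$ and $R_0 \subset R$. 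Since $\ad_{\gh_{\gg}}$ preserves $\gh$ and each root space, such a $\gk$ is normalized by $\gh_{\gg}$; hence $\gk$ is regular, and so is $\mathcal{J}$.

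The hard part is this final step. There is no a priori reason that a subalgebra $\gk$ satisfying only $\gk + \bar{\gk} = \gg^{\mathbb{C}}$ and $\gk \cap \bar{\gk} = \gl^{\mathbb{C}}$ should be stable under $\ad_{\gh_{\gg}}$, so the entire weight of the argument rests on Wang's theorem, which guarantees that $J$ --- equivalently $\gk$ --- can be aligned with the root decomposition of some Cartan subalgebra. The preliminary reduction to $\gl$ abelian (so that $L$ is a torus and $M = G/T'$) is what places us cleanly within the scope of Wang's construction, with trivial semisimple isotropy.
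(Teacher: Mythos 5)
Your proof is correct and follows the same route as the paper: the paper's own (very brief) proof likewise invokes Corollary \ref{quot} to obtain an invariant complex structure on the reductive quotient $M=G/L$ and then cites Theorem \ref{wang} to conclude that $\gk$ is regular. Your additional preliminary step (using Proposition \ref{n} to show $\gl$ is abelian, so $L$ is a torus) and your explicit explanation of why Wang's construction forces $\gk$ to be normalized by a Cartan subalgebra are details the paper leaves implicit, but they do not change the argument.
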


\begin{proof}
Since $G$ is semisimple and compact, $M= G/L$ is reductive and
$\mathcal J$ induces an invariant complex structure $J$ on $M$,
defined by the subalgebra $\gk$ (see Corollary \ref{quot}). By Theorem
\ref{wang}, $\gk$ is regular.
\end{proof}

\section{Invariant generalized complex structures on
semisimple Lie groups of outer type}\label{examples}

In this section we construct a large class of regular generalized
complex structures on semisimple Lie groups of outer type. Let
$\gg =(\gg^{\mathbb{C}})^{\sigma}$ be a real form of outer type of
a complex semisimple Lie algebra $\gg^{\mathbb{C}}$, defined by an
antilinear conjugation
\[
\sigma :\gg^{\mathbb C}\rightarrow\gg^{\mathbb{C}},\quad x\rightarrow
\bar{x}.
\]
In a first stage, we are looking for $\gg$-admissible regular
pairs $(\gk ,\widehat{\omega}_{0})$, such that the $2$-form
$\widehat{\omega}_{0}$ is  the trivial extension of a form defined
on the Cartan part of the regular subalgebra $\gk$ of
$\gg^{\mathbb C}$. Assume, as usual, that
\begin{equation}\label{pentrusimpla}
\gk = \gh_0 + \gg(R_0),
\end{equation}
where $\gh_{0}$ is included in a $\sigma$-invariant Cartan
subalgebra $\gh$ of $\gg^{\mathbb C}$ and $R_{0}$ is a
$\sigma$-positive system  of the root system $R$ of
$\gg^{\mathbb{C}}$ relative to $\gh$ (see Definition
\ref{sigmapara}). Thus, we are looking for pairs $(\gh_{0},
\widehat{\omega}_{0})$, with
$\widehat{\omega}_{0}\in\Lambda^{2}(\gh_{0})$, such that the
following conditions are satisfied:

\begin{enumerate}

\item[i)] $\gh_{0}+ \bar{\gh}_{0} =\gh$;

\item[ii)] the trivial extension of $\widehat{\omega}_{0}$ to $\gk$ is closed;

\item[iii)] $\mathrm{Im}\left(\widehat{\omega}_{0}\vert_{\gl}\right)$ is
non-degenerate, where $\gl := \gh_{0}\cap \gg$ is the real part of
$\gh_{0}$.\\

\end{enumerate}

We first remark that the symmetric part $R_{0}^{\mathrm{sym}}$ of
the $\sigma$-positive system $R_{0}$ is not empty in general (as
it happens when $\gg$ is of inner type). Since $\gk$ is a subalgebra,
\[
[\gg_{\alpha}, \gg_{-\alpha}]\subset \gh_{0},\quad\forall\alpha\in
R_{0}^{\mathrm{sym}}
\]
and a simple computation shows that condition ii) above is
equivalent to
\begin{equation}\label{conditie-closed}
\widehat{{\omega}}_{0} ([E_{\alpha}, E_{-\alpha}],
H)=0,\quad\forall \alpha\in R_{0}^{\mathrm{sym}}, \quad\forall
H\in \gh_{0}.
\end{equation}
Define
\[
\mathcal S :=\mathrm{Span}_{\mathbb{C}}\{ [E_{\alpha} ,
E_{-\alpha}],\quad \alpha\in R_{0}^{\mathrm{sym}}\} =
\mathrm{Span}(R_{0}^{\mathrm{sym}})^{\flat}
\]
where $\flat :\gh^{*} \rightarrow \gh$ is the isomorphism defined by
the Killing form and assume that
\begin{equation}\label{condfin-1}
{\mathcal S}\cap \bar{\mathcal S} =\{ 0 \} .
\end{equation}
\smallskip

Assuming that the additional condition (\ref{condfin-1}) is
satisfied, we now describe a simple construction of pairs
$(\gh_{0}, \widehat{\omega}_{0})$ such that i), ii) and iii) hold.
For this, consider a subspace $\gh_{1}\subset \gh$ such that
$\gh_{1} +\bar{\gh}_{1} =  \gh$ and ${\mathcal S}\subset\gh_{1}.$
From (\ref{condfin-1}), we can chose a complementary subspace
$\gh_{0}$ of $\gh_{1}\cap \bar{\mathcal S}$ in $\gh_{1}$, such
that
${\mathcal S}\subset \gh_{0}.$ One can check that:\\

a) $\gh_{0}\cap \bar{\gh}_{0}$ is transverse to $\mathcal S$;\\

b) $\gh = \gh_{0} +\bar{\gh}_{0}$.\\

The subspace $\gh_{0}$ decomposes into a direct sum
\[
\gh_{0} = (\gh_{0} \cap\bar{\gh}_{0}) \oplus {\mathcal S}\oplus
{\mathcal W}
\]
where $\mathcal W\subset \gh_{0}$ is any complementary subspace of
$(\gh_{0}\cap\bar{\gh}_{0})\oplus {\mathcal S}$. One can chose
$\widehat{\omega}_{0}\in\Lambda^{2}(\gh_{0})$ such that ${\mathcal
S}\subset \mathrm{Ker}(\widehat{\omega}_{0})$ and
$\mathrm{Im}\left(\widehat{\omega}_{0}\right)$ is non-degenerate
on $\gl = (\gh_{0}\cap\bar{\gh}_{0})^{\sigma}.$  Then $(\gk ,
\widehat{\omega}_{0})$ is a $\gg$-admissible pair.\\

Our next aim is to construct other regular $\gg$-admissible pairs
$(\gk , \omega )$, for which $\gg = (\gg^{\mathbb C} )^{\sigma}$
is any real form of outer type of $\gg^{\mathbb C}=
\gsl_{2n}({\mathbb C})$ $(n\geq 2$), $\gso_{2n}(\mathbb{C})$
$(n\geq 3$) and $\ge_{6}$, the regular subalgebra $\gk$ is
normalized by a maximally compact Cartan subalgebra of $\gg$, the
root system $R_{0}$ of $\gk$ is a $\sigma$-positive system and the
$2$-form $\omega$ has a non-zero restriction to the root part
$\gg(R_0)$ of $\gk .$ The following Proposition simplifies considerably
this task. It shows that we can chose (arbitrarily), for each of
the above complex Lie algebras, a real form of outer type and find
regular pairs with respect to these real forms. They will provide
invariant generalized complex structures on all real forms of
outer type of $SL_{2n}({\mathbb C})$ $(n\geq 2)$,
$SO_{2n}({\mathbb C})$ $(n\geq 3$) and $E_{6}.$

\begin{prop}\label{remarca}
Let $\gg^{\mathbb C}$ be a complex simple Lie algebra, $\gg =
(\gg^{\mathbb C})^{\sigma}$ a real form of outer type of $\gg^{\mathbb C}$ and
$(\gk , \omega )$ a regular $\gg$-admissible pair, with
\begin{equation}\label{kappa-fin}
\gk = \gh_{\gk} + \gg (R_{0})
\end{equation}
normalized by a maximally compact Cartan subalgebra $\gh_{\gg}$ of
$\gg$ and $R_{0}$ a $\sigma$-positive system.
Then any other real form of outer type of $\gg^{\mathbb{C}}$ is conjugated to a real form
$\gg^{\prime}$ such that $(\gk ,\omega )$ is
$\gg^{\prime}$-admissible.
\end{prop}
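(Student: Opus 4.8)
The plan is to show that the admissibility of $(\gk,\omega)$ depends on the conjugation $\sigma$ only through two pieces of data that are shared by all real forms of outer type: the permutation that $\sigma$ induces on the root system $R$, and the restriction $\sigma\vert_{\gh}$ to the Cartan subalgebra $\gh=(\gh_{\gg})^{\mathbb{C}}$. Once this is isolated, the proposition reduces to the structural fact that any two outer real forms of a simple $\gg^{\mathbb{C}}$ of type $A_{2n-1}$, $D_{n}$ or $E_{6}$ lie in a single inner class, since such a $\gg^{\mathbb{C}}$ carries a unique nontrivial diagram automorphism $\nu$ of order two.

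First I would isolate the relevant data. Because $R_{0}$ is a $\sigma$-positive system, $R_{0}\cap\sigma(R_{0})=\emptyset$, so by (\ref{gk-ad}) we have $\gk\cap\bar{\gk}=\gh_{\gk}\cap\bar{\gh}_{\gk}\subset\gh$ and hence $\gl=\gk\cap\gg\subset\gh_{\gg}$. Consequently the non-degeneracy condition in part ii) of admissibility involves only the Cartan--Cartan block of $\omega$ together with the subspace $\gl$, both governed by $\sigma\vert_{\gh}$; and the condition $\gg^{\mathbb{C}}=\gk+\bar{\gk}$ of part i) is, again by (\ref{gk-ad}), equivalent to $R_{0}\cup\sigma(R_{0})=R$ together with $\gh_{\gk}+\sigma(\gh_{\gk})=\gh$, which sees only the root permutation and $\sigma\vert_{\gh}$.

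Next I would put the two forms into a common normal position. Fixing a compact conjugation $\tau$ and writing $\sigma=\theta\tau$, I would attach to the standard outer involution $\theta_{0}$ of a pinning the maximally compact Cartan subalgebra for which $\theta_{0}\vert_{\gh}=\nu$. A direct computation then gives $\gh_{\gg}=\gh(\mathbb{R})^{-\nu}\oplus i\,\gh(\mathbb{R})^{+\nu}$ and $\sigma\vert_{\gh}=\nu\circ\tau\vert_{\gh}$, so that both $\gh_{\gg}$ and $\sigma\vert_{\gh}$ depend only on $\nu$ and are therefore identical for every outer form in the inner class. By the Vogan diagram classification, an arbitrary outer real form is conjugate to one whose Cartan involution is $\theta=\Ad(t)\circ\theta_{0}$ with $t=\exp(i\pi H)$ a complexified-torus element; its conjugation $\sigma'=\theta\tau$ then satisfies $\sigma'=\Ad(t)\circ\sigma$. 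Since $\Ad(t)$ is the identity on $\gh$ and multiplies each $\gg_{\alpha}$ by $\pm1$, it follows that $\sigma'\vert_{\gh}=\sigma\vert_{\gh}$ and $\sigma'(\alpha)=\sigma(\alpha)$ for every $\alpha\in R$.

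Finally I would transfer admissibility. As $\Ad(t)$ preserves $\gh$ and each root space, $\sigma'(\gk)=\Ad(t)\,\sigma(\gk)=\sigma(\gk)=\bar{\gk}$, so $\gk+\sigma'(\gk)=\gg^{\mathbb{C}}$ and $R_{0}$ stays $\sigma'$-positive; moreover $\gk\cap\sigma'(\gk)=\gl^{\mathbb{C}}$ and, because $\sigma'\vert_{\gh}=\sigma\vert_{\gh}$, its $\sigma'$-fixed points give $\gl'=\gk\cap\gg'=\gl$, whence $\mathrm{Im}(\omega\vert_{\gl'})=\mathrm{Im}(\omega\vert_{\gl})$ remains non-degenerate. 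Thus $(\gk,\omega)$ is $\gg'$-admissible, where $\gg'=(\gg^{\mathbb{C}})^{\sigma'}$ is conjugate to the given outer form. I expect the main obstacle to be the third step: verifying rigorously that every outer form in the inner class is conjugate to one obtained from $\theta_{0}$ by an inner torus twist $\Ad(\exp(i\pi H))$, so that the two conjugations genuinely share $\gh_{\gg}$, $\sigma\vert_{\gh}$ and the induced permutation of $R$. This is exactly where the Vogan-diagram description and the uniqueness of $\nu$ for the types $A_{2n-1},D_{n},E_{6}$ carry the argument.
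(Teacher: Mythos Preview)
Your argument is correct and follows essentially the same route as the paper: both isolate that, because $R_{0}$ is $\sigma$-positive and hence $\gk\cap\bar{\gk}\subset\gh$, the admissibility conditions depend only on $\sigma\vert_{\gh}$ and the induced permutation of $R$---i.e., only on the diagram symmetry $s$ of the Vogan diagram and not on the painted nodes---and then invoke that any two outer real forms share the same $s$ up to conjugation. The one point to tighten is your claim of a \emph{unique} nontrivial order-two diagram automorphism: for $D_{4}$ there are three, but as the paper remarks they are mutually related by outer automorphisms of $D_{4}$, so the conclusion still holds.
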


\begin{proof}
To prove the claim, we
recall the
formalism of Vogan diagrams
 \cite[p.\,344]{knapp}, which
describes the real forms of a given complex simple Lie algebra $\gg^{\mathbb{C}} .$
An abstract Vogan diagram is
an abstract Dynkin diagram, which represents a system of simple roots $\Pi$
of $\gg^{\mathbb{C}}$ relative to a Cartan subalgebra $\gh$,
together with two pieces of additional
information: some arrows between vertices, which indicate the action of an involutive symmetry $s$
of the Dynkin diagram (in the case of outer real forms)  and a subset of
painted nodes (in the fix point set of $s$), which indicates the non-compact simple roots.
The symmetry $s$ defines an involution $\theta$ of the Cartan subalgebra $\gh$,
which can be canonically extended to an involutive automorphism $\theta$ of $\gg^{\mathbb{C}}$,
by the conditions: for any $\alpha \in \Pi$,
\begin{equation}
\theta (E_{\alpha })=\begin{cases}
E_{\alpha^{\prime}},\quad \mathrm{if}\ s(\alpha )= \alpha^{\prime}\neq \alpha\\
E_{\alpha},\quad \text{if}\ \alpha = s(\alpha )\  \mathrm{is\ not\ a\ painted\ root}\\
- E_{\alpha},\quad \text{if}\ \alpha = s(\alpha )\  \mathrm{is\ a\ painted\ root},
\end{cases}
\end{equation}
where $\{ E_{\alpha}\}$ are root vectors of a Weyl basis.

The composition $\sigma := \theta\circ \tau$, where $\tau$ is the compact involution
commuting with $\theta$,
defines the real form $\gg = (\gg^{\mathbb{C}})^{\sigma}$ associated to the Vogan diagram.
Moreover, the real subalgebra
\begin{equation}\label{cartan-special}
\gh_{\gg} = \gh^{+}\oplus \gh^{-}=  \{ x\in i \gh ({\mathbb R}), \quad \theta (x) = x\}
\oplus \{ x\in \gh ({\mathbb R}), \quad \theta (x) = -x \}
\end{equation}
is a maximally compact ($\theta$-invariant) Cartan subalgebra of $\gg$.

Consider now a regular $\gg$-admissible pair
$(\gk ,\omega )$, like in (\ref{kappa-fin}),
 where $\gg$ is a real form of outer type of a complex simple Lie algebra $\gg^{\mathbb{C}}$.
The defining conditions for  $(\gk , \omega )$, namely
\[
R_{0} \cup \sigma ( R_{0}) = R,\quad R_{0} \cap \sigma (R_{0}) =
\emptyset ,\quad \gh_{\gk}+ \bar{\gh}_{\gk}= \gh ,\quad
d_{\gk}\omega = 0
\]
and $\mathrm{Im}(\omega\vert_{(\gh_{\gk}\cap \bar{\gh}_{\gk})^{\sigma}})$
non-degenerate, depend only on the symmetry $s$ of the associated Vogan
diagram.  Remark now that the symmetry $s$ is unique,
for $\gg^{\mathbb{C}} \neq D_{4}$ and for $D_{4}$ there are three symmetries related by
an outer automorphism of $D_{4}.$ This proves our claim.

\end{proof}

\begin{rem}
In the statement of Proposition \ref{remarca} it is essential that
$R_{0}$ is a $\sigma$-positive system.  When $R_{0}$ is $\sigma$-parabolic
but not necessarily $\sigma$-positive,
$(\gk\cap \bar{\gk})^{\sigma}$ does not reduce in general to the Cartan part
$(\gh_{\gk}\cap \bar{\gh}_{\gk})^{\sigma}$
and it may depend also on which nodes from the Vogan diagram are painted; therefore,
the same is true for the condition  $\mathrm{Im}(\omega\vert_{(\gk\cap \bar{\gk})^{\sigma}})$
from the definition of $\gg$-admissible pairs.
\end{rem}

Motivated by Proposition \ref{remarca}, in the remaining part of this
section we construct regular generalized complex structures (and
admissible pairs) on $G= SL_{n}({\mathbb H})$ ($n\geq 2$),
$SO_{2n-1,1}$ ($n\geq 3$) and two real forms of outer type of
$E_{6}.$

\subsection{Generalized complex structures on $SL_{n}({\mathbb
H})$}\label{slnh}

{\bf a) Description of  the antiinvolution $\sigma$ which defines
$\gsl_n({\mathbb{H}})$}\\

Let $V =\mathbb{C}^{2n}$ be a  complex vector space of dimension
$2n\geq 4$, with standard basis $\{ e_{1}, \cdots ,
e_{n},e_{1^{\prime}}, \cdots , e_{n^{\prime}}\}$ and
$\gsl_{2n}({\mathbb C})$ the Lie algebra of traceless
endomorphisms of $V$. We  denote by
\[
E_{ij} = e_i \otimes e_j^*, \, E_{i^{\prime}j^{\prime}}= e_{i^{\prime}}
\otimes e^*_{j^{\prime}},\,
 E_{i^{\prime}j} = e_{i^{\prime}} \otimes e^*_{j}    ,\,
 E_{ij^{\prime}}= e_{i} \otimes e^*_{j^{\prime}}
 \]
the  associated  basis of $\mathfrak{gl}(V)$ and we  choose  a
Cartan subalgebra
\[
\gh = \{ H = \sum_{i=1}^{n} x_i E_{ii} + \sum_{j'=1}^n x_{j'}
E_{j'j'},\,\, \sum_{i=1}^{n} x_i + \sum_{j^{\prime}=1}^{n} x_{j'}
=0\}
\]
which consists of traceless  diagonal matrices. Denote by
$\epsilon_i, \epsilon_{j'}$ the linear forms on $\gh$ defined by
$$\epsilon_i(H) = x_i,\,\,\,\,\,  \epsilon_{j'}(H) = x_{j'}.$$
The roots of $\gsl(V)$ are
\[
R:= \{\epsilon_{ij}:=\epsilon_{i}-\epsilon_{j},
\epsilon_{i^{\prime}j^{\prime}}:=\epsilon_{i^{\prime}}-\epsilon_{j^{\prime}},
\epsilon_{i^{\prime}j}:= \epsilon_{i^{\prime}}-\epsilon_{j},
\epsilon_{ij^{\prime}}:= \epsilon_{i}-\epsilon_{j^{\prime}}\}.
\]
The Lie algebra ${\mathfrak{sl}}_n(\mathbb{H})$ is a real form of
outer type of $\gsl_{2n}({\mathbb C})$,  defined by the antilinear
involution
\[
\sigma (A) = -J\bar{A}J,\quad\forall A\in
{\mathfrak{sl}}_{2n}({\mathbb C}),
\]
where $J$ is the  matrix
\[
J = \left(\begin{tabular}{cc} $0$ & $\Id$\\
$-\Id$ & $0$
\end{tabular}\right).
\]
The antilinear involution $\sigma$ acts on roots, transforming
unprime indices  into  prime indices  and vice versa, i.e.
\[
 \sigma (\epsilon_{ij})= \epsilon_{i^{\prime}j^{\prime}},\quad
\sigma (\epsilon_{i^{\prime}j^{\prime}})= \epsilon_{ij},\quad
\sigma (\epsilon_{ij^{\prime}})= \epsilon_{i^{\prime}j},
\quad\sigma (\epsilon_{i^{\prime}j})= \epsilon_{ij^{\prime}}.
\]
Alternatively, one may also define $\gsl_{n}({\mathbb H})$ as the
unique real form of outer type of $\gsl_{2n}({\mathbb C})$, whose
Vogan diagram has no painted node. As described above,
$\gsl_{n}({\mathbb H})$ can be obtained as in the proof of Theorem
6.88 of \cite{knapp} (see also Proposition \ref{remarca}), by
considering the Weyl basis (\ref{vectori}) (see below) and assigning to the
nodes of $A_{2n-1}$ the simple roots $\epsilon_{12^{\prime}},
\epsilon_{2^{\prime}3}, \epsilon_{34^{\prime}},\cdots  ,
\epsilon_{3^{\prime}2}, \epsilon_{21^{\prime}}$. The unique simple
root fixed by the non-trivial automorphism of $A_{2n-1}$ (given by
the horizontal reversal) is $\epsilon_{n^{\prime}n}$ ($n$-even) or
$\epsilon_{nn^{\prime}}$ ($n$-odd) and is a white root. The Cartan
subalgebra
\[
\gh^{\sigma} = \{ H = \sum_{i=1}^{n}x_{i} E_{ii} +\sum_{j=1}^{n}
\overline{x}_{j}E_{j^{\prime}j^{\prime}},\quad \sum_{i=1}^{n}(
x_{i} +\bar{x}_{i})=0\}
\]
is a maximally compact Cartan subalgebra of $\gsl_{n}({\mathbb
H})$. Any $\alpha\in R$ for which $\sigma (\alpha ) = -\alpha$ is
of the form $\epsilon_{ii^{\prime}}$ or $\epsilon_{i^{\prime}i}$
and is compact, since $\sigma (E_{\epsilon_{ii^{\prime}}}) = -
E_{\epsilon_{i^{\prime}i}}$, where $E_{\epsilon_{ij}}$ are root
vectors of the Weyl basis (\ref{vectori}). It follows that
$\gh^{\sigma}$ is also a maximally non-compact Cartan subalgebra
of $\gsl_{n}({\mathbb H})$  (see \cite[p.\,335]{knapp}).

\vspace{10pt}

{\bf b) $\sigma$-positive  systems of  the Lie algebra
$\gsl_{2n}({\mathbb C})$}

\begin{prop}\label{clasif1}
Any  $\sigma$-positive  root system  $R_0$ of the root system $R$
is  equivalent to one of the following systems:

a) $\{ \epsilon_i - \epsilon_j,\, \epsilon_{i} -
\epsilon_{j'},\,\, i,j = 1,2,\cdots  ,n\}$;

b) $\{ \epsilon_i - \epsilon_j,\, \epsilon_{i'} - \epsilon_j,\,\,
i,j = 1,2, \cdots  ,n \}.$

\end{prop}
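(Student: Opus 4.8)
The plan is to translate the statement into combinatorics of transitive directed graphs and then to normalize by the allowed equivalences. Identify the index set $I=\{1,\dots ,n,1',\dots ,n'\}$ and regard each root $\epsilon_a-\epsilon_b$ as the directed edge $a\to b$ on the complete graph on $I$. Under this dictionary $\sigma$ becomes the prime-swap involution $\pi\colon i\leftrightarrow i'$, acting on edges by $\pi(a\to b)=\pi(a)\to\pi(b)$; a subset $R_0$ is closed exactly when its edge set is transitive (if $a\to b$ and $b\to c$ lie in $R_0$ then $a\to c\in R_0$ for $a\neq c$), and $R_0$ is a $\sigma$-positive system exactly when it meets each pair $\{\gamma ,\pi\gamma\}$ in precisely one element. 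Since $\pi$ fixes no root, these pairs partition $R$ and $R_0$ is a transversal. The equivalences available are the global sign $\gamma\mapsto-\gamma$, the map $\sigma=\pi$ itself (which for $A_{2n-1}$ lies in the Weyl group $W=S_{2n}$), and the centralizer $C_W(\pi)\cong S_n\ltimes(\mathbb{Z}/2)^n$ of signed permutations, which permute the pairs $\{i,i'\}$ and may swap the two members of any pair.

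The first structural step is to encode $R_0$ as a preorder $\preceq$ on $I$, setting $a\preceq b$ iff $a\to b\in R_0$ or $a=b$; transitivity makes $\preceq$ a genuine preorder whose symmetric part is the closed symmetric set $Q:=R_0\cap(-R_0)=R_0^{\mathrm{sym}}$ (a sub-root-system) and whose strict part is $R_0^{\mathrm{asym}}$. In contrast with the inner case, where a result of Bourbaki identifies $\sigma$-positive systems with ordinary positive systems, here $Q$ may be nonempty, so $R_0$ need not be a positive system; this is precisely the phenomenon that the representatives a) and b) must accommodate. I would then observe that the equivalence classes of $\preceq$ are the blocks of $Q$, that no block can contain a $\pi$-orbit pair $\{i,i'\}$ (else both $\epsilon_{ii'}$ and $\epsilon_{i'i}$, which form a single $\pi$-pair, would lie in $R_0$), and that consequently, for each $i$, exactly one of $\epsilon_{ii'},\epsilon_{i'i}$ belongs to $R_0$.

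Next I would normalize. Using the sign swaps in $C_W(\pi)$ one can independently toggle, for each $i$, which of $\epsilon_{ii'},\epsilon_{i'i}$ lies in $R_0$, so after a signed permutation we may assume $\epsilon_{ii'}\in R_0$ for every $i$. The heart of the argument is then to combine transitivity with the transversal condition to show that, after a further permutation from $C_W(\pi)$ and possibly the operations $\gamma\mapsto-\gamma$ and $\gamma\mapsto\sigma(\gamma)$, the preorder collapses to the canonical one in which the single block $\{1,\dots ,n\}$ sits entirely above $\{1',\dots ,n'\}$. Reading off the edge set then yields exactly the roots with unprimed source, i.e. system a); reversing the global orientation (the transformation $\gamma\mapsto-\gamma$) exchanges source and sink and produces system b), which is why both appear in the statement.

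The main obstacle will be precisely this collapsing step. A priori the preorder may exhibit several blocks arranged in a nontrivial partial order, and one must show that closure together with the requirement that $R_0$ meet each pair $\{\gamma ,\pi\gamma\}$ in exactly one element is rigid enough to eliminate every configuration except the canonical ones up to equivalence. Concretely, whenever two blocks are $\preceq$-incomparable the transversal condition forces their $\pi$-images to be comparable in the opposite direction, and propagating these forced comparabilities through the transitivity relations — while keeping track of how the signed permutations act on the blocks — is the delicate, case-by-case part of the proof. I expect the cleanest route to be an induction on $n$ that peels off a $\preceq$-maximal block and uses $C_W(\pi)$ to place it in standard position, thereby reducing to the same problem on a smaller index set.
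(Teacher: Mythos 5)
Your dictionary is set up correctly (closedness $=$ transitivity, $\sigma$-positivity $=$ a transversal of the $\pi$-pairs, equivalences acting through signed permutations, blocks of the preorder $=$ components of $R_0^{\mathrm{sym}}$, no block contains a pair $\{i,i'\}$), but your proof stops exactly where the proposition lives: the ``collapsing step'' is not an argument, it is a restatement of the claim, and you flag it yourself as the part you have not done. The gap is fatal, because that step cannot be carried out: the proposition as stated is false. Take the total order $1>2>\cdots>n>n'>(n-1)'>\cdots>1'$ on the index set and let $R_0$ be the set of all roots $\epsilon_a-\epsilon_b$ with $a$ preceding $b$. This $R_0$ is closed (it is an honest positive system), and since the prime swap reverses this order, $\sigma(R_0)=-R_0$; hence $R_0\cap\sigma(R_0)=\emptyset$ and $R_0\cup\sigma(R_0)=R$, so $R_0$ is a $\sigma$-positive system. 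Its symmetric part $R_0\cap(-R_0)$ is empty, whereas the systems a) and b) both have symmetric part $P=\{\epsilon_i-\epsilon_j\}$ of cardinality $n(n-1)\geq 2$. Every move permitted in Definition \ref{sigmapara} iii) --- a Weyl element commuting with $\sigma$, the map $R\mapsto -R$, and $R\mapsto\sigma(R)$ --- is a bijection of the root system commuting with $-\mathrm{id}$, hence preserves the cardinality of the symmetric part; so this $R_0$ is equivalent to neither a) nor b). In your picture it is a chain of $2n$ singleton blocks, and none of the allowed moves can merge blocks, so it can never reach the two-level preorder of a).

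For comparison, the paper's own proof founders on the same rock, at its first step: it writes $P=(P\cap R_0)\cup(P\cap\sigma(R_0))$ as a disjoint union of two closed ``subsystems'' of the indecomposable system $P$ and concludes that one part must be empty. That conclusion is valid only for \emph{symmetric} closed subsets; non-symmetric closed subsets can perfectly well partition an indecomposable root system (positive versus negative roots), and in the example above both parts are nonempty (for $n=2$ they are $\{\epsilon_1-\epsilon_2\}$ and $\{\epsilon_2-\epsilon_1\}$). Once ``WLOG $P\subset R_0$'' is granted, the paper's closure argument does correctly force a) or b), and your normalization would likewise succeed under the extra hypothesis that the unprimed indices form a single block; but without that hypothesis no case analysis can close up, because genuine positive systems adapted to $\sigma$ form at least one further equivalence class that both you and the paper miss.
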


\begin{proof} Denote  by $P = \{ \epsilon_{ij} \}$  the subsystem of $R$  which is the root system of type
$A_{n-1}$, hence  indecomposable. But $P = (P \cap R_0) \cup (P
\cap \sigma(R_0))$ is  a decomposition of $P$ in a disjoint union
of two closed  subsystems. Hence, one of these two parts is empty.
Without loss of  generality, we may assume that $P \subset R_0$.
Assume that $\epsilon_{i_0 j'_0} \in R_0$. Then $\epsilon_{ij_0'}
= \epsilon_{ii_{0}} +\epsilon_{i_{0}j_{0}^{\prime}} \in R_0 \,\,
\forall i$, because $R_{0}$ is closed, and $\epsilon_{i'j_0}
=\sigma (\epsilon_{ij^{\prime}_{0}})\in \sigma(R_0 )$ $\forall
i^{\prime}$. It follows that $\epsilon_{i'j} \in \sigma(R_0)
\,\,\,\, \forall j$ (if this were not true, then we could find
$j_{1}$ such that $\epsilon_{i^{\prime}j_{1}} \in R_{0}$; but
then, since $\epsilon_{j_{1}j_{0}}\in R_{0}$ and $R_{0}$ is
closed, also $\epsilon_{i^{\prime}j_{0}}\in R_{0}$ which is
impossible, since $R_{0}$ and $\sigma (R_{0})$ are disjoint). Thus
$\epsilon_{ij'} =\sigma (\epsilon_{i^{\prime}j})\in R_0 \,\,\,\,
\forall i,j^{\prime}$ and we get the system {\it a).} If
$\epsilon_{ij'} \notin R_0 \,\,\, \forall i,j^{\prime}$, we get
the system {\it b).}
 \end{proof}

\vspace{10pt}

{\bf c) $\gsl_{n}({\mathbb H})$-admissible pairs}\\

Now we describe $\gsl_{n}({\mathbb H})$-admissible pairs $(\gk,
\omega)$, where the subalgebra $\gk\subset \gsl_{2n}({\mathbb C})$
is regular with root system the $\sigma$-positive system $R_0$ of
type {\it a)} from Proposition \ref{clasif1}. The case {\it b)} is
similar. The  subalgebra $\gk$ can be written as
\begin{equation}\label{hh}
\gk =  \gh_0 + \gg(R_0)= \gh_{0}+ \sum_{i,j}
\gg_{\epsilon_{ij}}+\sum_{i, j^{\prime}}
\gg_{\epsilon_{ij^{\prime}}}\subset \gsl_{2n}({\mathbb C}),
\end{equation}
where $\gh_{0}\subset \gh$, with $\gh_{0} + \bar{\gh}_{0}= \gh$.
The vectors
\begin{equation}\label{vectori}
 E_{\epsilon_{ij}}=\frac{1}{\sqrt{2n}}E_{ij}, \quad
E_{\epsilon_{i^{\prime}j^{\prime}}}=\frac{1}{\sqrt{2n}}E_{i^{\prime}j^{\prime}},\quad
E_{\epsilon_{i^{\prime}j}}=\frac{1}{\sqrt{2n}}E_{i^{\prime}j},\quad
E_{\epsilon_{ij^{\prime}}}=\frac{1}{\sqrt{2n}}E_{ij^{\prime}}
\end{equation}
are root vectors of a Weyl basis and the associated structure
constants are given by
\[
N_{\epsilon_{ij},\epsilon_{js}}= - N_{\epsilon_{ji},
\epsilon_{sj}}= \frac{1}{\sqrt{2n}},\quad\forall i\neq j\neq s
\]
and their prime analogues (obtained by replacing any of the $i,j,
s$ by its prime analogue). The symmetric part
$R_{0}^{\mathrm{sym}}$ of $R_{0}$ is given by
\begin{equation}\label{lm}
R_{0}^{\mathrm{sym}}= \{\epsilon_{ij},\ i,j=1,2,\cdots , n\} .
\end{equation}
Since $\gk$ is a subalgebra, (\ref{lm}) together with
\[
[E_{\epsilon_{ij}}, E_{\epsilon_{ji}}] = \frac{1}{2n} (E_{ii} -
E_{jj})
\]
imply that $E_{ii} - E_{jj} \in \gh_{0}$, for any $i,j.$ Note that
\[
{\mathcal S}: =\mathrm{Span}_{\mathbb C}\{ [E_{\epsilon_{ij}},
E_{\epsilon_{ji}}],i,j=1,2,\cdots , n\} = \mathrm{Span}_{\mathbb C}\{ E_{ii} -
E_{jj}, i,j=1,2,\cdots , n\}
\]
is transversal to
\[
\bar{\mathcal S} =  \mathrm{Span}_{\mathbb C}\{
E_{i^{\prime}i^{\prime}} - E_{j^{\prime}j^{\prime}},i^{\prime},
j^{\prime}=1,2,\cdots , n\} .
\]
Thus condition (\ref{condfin-1}) is satisfied. The following
theorem describes $\gsl_{n}({\mathbb H})$-admissible pairs $(\gk,
\omega)$ and hence also invariant generalized complex structures
on the group $SL_n(\mathbb{H})$. Below the covectors
$\omega_{\epsilon_{ij}},\omega_{\epsilon_{i^{\prime}j}},
\omega_{\epsilon_{ij^{\prime}}},
\omega_{\epsilon_{i^{\prime}j^{\prime}}} $ are dual to the root
vectors (\ref{vectori}) and annihilate $\gh$. We assume that
$n\geq 3$ (when $n=2$ the theorem still holds, under the
additional assumption that $\epsilon_{i} +\epsilon_{r}
-\epsilon_{j^{\prime}} - \epsilon_{s^{\prime}}$ is not identically
zero on $\gh_{0}$, for any $i, r, j^{\prime}, s^{\prime}$, with
$(i,j^{\prime}) \neq (r,s^{\prime})$).

\begin{thm}  Any closed  $2$-form $\omega$ on the Lie algebra $\gk$ defined
in (\ref{hh}) is given by
\begin{align*}
\omega &=\widehat{\omega}_{0}+\sum_{i\neq j}
\lambda_{(ij)}\epsilon_{ij}\wedge
\omega_{\epsilon_{ij}}+\frac{1}{\sqrt{2n}} \sum_{i\neq j\neq k}
\lambda_{(ik)} \omega_{\epsilon_{ij}}\wedge\omega_{\epsilon_{jk}}
+
\sum_{i\neq j}\eta_{(ij)}\omega_{\epsilon_{ij}}\wedge\omega_{\epsilon_{ji}}\\
&+ \sum_{k, j^{\prime}} \lambda_{(kj^{\prime})}\left(
\sqrt{2n}\epsilon_{kj^{\prime}}\wedge
\omega_{\epsilon_{kj^{\prime}}} +\sum_{i\neq k
}\omega_{\epsilon_{ki}}\wedge
\omega_{\epsilon_{ij^{\prime}}}\right)
\end{align*}
where $\widehat{\omega}_{0}\in\Lambda^{2}(\gh_{0}^{*})$ is such
that
\begin{equation}\label{conditie-omega}
\widehat{\omega}_{0}(E_{ii}- E_{jj},\cdot )=0,\quad\forall i, j
\end{equation}
and
$\lambda_{(ij)},\lambda_{(ij^{\prime})},\eta_{(ij)}\in\mathbb{C}$.
The pair $(\gk, \omega)$ is $\gsl_{n}({\mathbb H})$-admissible,
hence it defines a regular generalized complex structure on
$SL_{n}({\mathbb H})$, if and only if the real 2-form
$\mathrm{Im}\left(\widehat{\omega}_{0}\right)$ is non-degenerate
on $\gh_{0}\cap\gsl_{n}({\mathbb H})$.
\end{thm}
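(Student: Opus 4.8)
The plan is to apply Proposition \ref{rho} to a semidirect decomposition of $\gk$ adapted to the splitting $R_{0}=R_{0}^{\mathrm{sym}}\sqcup R_{0}^{\mathrm{asym}}$, where $R_{0}^{\mathrm{sym}}$ is the set (\ref{lm}) of roots $\epsilon_{ij}$ and $R_{0}^{\mathrm{asym}}=\{\epsilon_{ij^{\prime}}\}$. In the matrix realization the vectors $E_{\epsilon_{ij^{\prime}}}\propto E_{ij^{\prime}}$ span the ``upper-right block'', which is abelian ($[E_{ij^{\prime}},E_{kl^{\prime}}]=0$) and is absorbed by the remaining brackets, since $[E_{ij},E_{kl^{\prime}}]=\delta_{jk}E_{il^{\prime}}$ and $\gh_{0}$ preserves each root space. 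Hence $\gp:=\gg(R_{0}^{\mathrm{asym}})$ is an abelian ideal of $\gk$, with complementary subalgebra $\gs:=\gh_{0}+\gg(R_{0}^{\mathrm{sym}})$; that $\gs$ is a subalgebra uses $[E_{\epsilon_{ij}},E_{\epsilon_{ji}}]=\tfrac{1}{2n}(E_{ii}-E_{jj})\in\gh_{0}$, already noted above. Writing $\omega=\rho_{0}+\rho_{1}+\rho_{2}$ as in Proposition \ref{rho}, closedness of $\omega$ becomes: $\rho_{0}$ closed on $\gs$; $\rho_{1}$ closed on $\gp$, which is automatic as $\gp$ is abelian; and the two mixed conditions (\ref{c1}), (\ref{c2}).

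First I would classify the closed forms $\rho_{0}$ on $\gs$ by decomposing $\rho_{0}$ into weight components for the adjoint action of $\mathcal S\subseteq\gh_{0}$. The decisive fact is that every root of $R_{0}^{\mathrm{sym}}$ is nonzero on $\mathcal S$, since $\epsilon_{ij}(E_{ii}-E_{jj})=2$; hence each nonzero-weight sector of $d_{\gs}\rho_{0}=0$ can be solved by the same mechanism as in the proof of Theorem \ref{main}, forcing the mixed Cartan--root part to be $\sum_{i\neq j}\lambda_{(ij)}\epsilon_{ij}\wedge\omega_{\epsilon_{ij}}$ and the root--root part to be $\tfrac{1}{\sqrt{2n}}\sum_{i\neq j\neq k}\lambda_{(ik)}\omega_{\epsilon_{ij}}\wedge\omega_{\epsilon_{jk}}$, the self-consistency of the latter being exactly the Jacobi identity for the $N_{\alpha\beta}$ used in Theorem \ref{main}. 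The weight-zero sector splits off the Cartan form $\widehat{\omega}_{0}\in\Lambda^{2}(\gh_{0}^{*})$ together with the diagonal pairings $\sum_{i\neq j}\eta_{(ij)}\omega_{\epsilon_{ij}}\wedge\omega_{\epsilon_{ji}}$; the only surviving relation here is the weight-zero identity produced by $[E_{\epsilon_{ij}},E_{\epsilon_{ji}}]=\tfrac{1}{2n}(E_{ii}-E_{jj})$, which is precisely (\ref{conditie-closed}) and yields the constraint (\ref{conditie-omega}) on $\widehat{\omega}_{0}$, the coefficients $\eta_{(ij)}$ remaining free.

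Next I would use (\ref{c1}) and (\ref{c2}) to determine the antisymmetric part. Because $\gp$ is abelian, (\ref{c1}) reduces to $\gs$-invariance of $\rho_{1}$; evaluating on $H\in\gh_{0}$ gives $(\epsilon_{ij^{\prime}}+\epsilon_{kl^{\prime}})(H)\,\rho_{1}(E_{\epsilon_{ij^{\prime}}},E_{\epsilon_{kl^{\prime}}})=0$, and since $\epsilon_{i}+\epsilon_{r}-\epsilon_{j^{\prime}}-\epsilon_{s^{\prime}}$ is not identically zero on $\gh_{0}$ (automatic for $n\geq 3$ because $\mathcal S\subseteq\gh_{0}$, and the hypothesis imposed for $n=2$) this forces $\rho_{1}=0$. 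Condition (\ref{c2}), together with the fact that each $\epsilon_{kj^{\prime}}$ and each relevant root sum is nonzero on $\gh_{0}$, then pins down $\rho_{2}$ to be $\sum_{k,j^{\prime}}\lambda_{(kj^{\prime})}\bigl(\sqrt{2n}\,\epsilon_{kj^{\prime}}\wedge\omega_{\epsilon_{kj^{\prime}}}+\sum_{i\neq k}\omega_{\epsilon_{ki}}\wedge\omega_{\epsilon_{ij^{\prime}}}\bigr)$, the structure-constant bookkeeping being the analogue of the passage from (\ref{rho2}) to (\ref{rho4}). Assembling the three pieces gives the asserted expression for $\omega$.

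Finally, for admissibility I would use that $R_{0}$ is a $\sigma$-positive system, so $\gk\cap\bar{\gk}$ has no root part and $\gl=\gk\cap\gg=(\gh_{0}\cap\bar{\gh}_{0})^{\sigma}=\gh_{0}\cap\gsl_{n}(\mathbb{H})$ lies inside $\gh$. Every term of $\omega$ other than $\widehat{\omega}_{0}$ carries a factor $\omega_{\alpha}$, which annihilates $\gh$; hence $\omega\vert_{\gl}=\widehat{\omega}_{0}\vert_{\gl}$, and $(\gk,\omega)$ is $\gsl_{n}(\mathbb{H})$-admissible exactly when $\mathrm{Im}(\widehat{\omega}_{0})$ is non-degenerate on $\gl=\gh_{0}\cap\gsl_{n}(\mathbb{H})$. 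The main obstacle is the second paragraph: since $R_{0}^{\mathrm{sym}}\neq\emptyset$ the subalgebra $\gs$ is non-abelian, unlike the purely abelian Cartan $\gh_{0}$ that occurs in Theorem \ref{main}, so one must genuinely analyze closed forms on $\gs$ and check by the Jacobi identity that the weight-zero $\eta_{(ij)}$-terms, together with the constraint (\ref{conditie-omega}), exhaust the closed forms there.
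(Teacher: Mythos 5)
Your proposal follows the paper's own proof in its overall architecture: the same semidirect split $\gk=\gs\oplus\gp$ with $\gs=\gh_{0}+\gg(R_{0}^{\mathrm{sym}})$ and abelian ideal $\gp=\gg(R_{0}^{\mathrm{asym}})$, the same application of Proposition \ref{rho}, the same argument killing $\rho_{1}$ (evaluation on $H=E_{ii}-E_{jj}\in{\mathcal S}$, with the $n\geq 3$ caveat), the same derivation of $\rho_{2}$ from (\ref{c2}), and the same reduction of admissibility to non-degeneracy of $\mathrm{Im}(\widehat{\omega}_{0})$ on $\gl=\gh_{0}\cap\gsl_{n}(\mathbb{H})$. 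You diverge only in the analysis of $\rho_{0}$ on $\gs$: the paper observes that $\gs$ is reductive, with semisimple part ${\mathcal S}+\gg(\{\epsilon_{ij}\})\cong\gsl_{n}(\mathbb{C})$ and center the annihilator of the $\epsilon_{ij}$ in $\gh_{0}$, and then uses the fact that closed $2$-forms on a semisimple Lie algebra are exact (the fact already used in Lemma \ref{reductive-lem}), so the root part of $\rho_{0}$ is some $d\xi$; you propose instead a weight decomposition under ${\mathcal S}$, treating the nonzero-weight sectors as in Theorem \ref{main}. You also correctly identify this step as the crux.

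The nonzero-weight part of your argument is sound, but your weight-zero claim --- that the only surviving relation is (\ref{conditie-closed}), so that the $\eta_{(ij)}$ remain free --- is false for $n\geq 3$, and this is precisely where the present situation differs from Theorem \ref{main}. There, no three roots of $R^{+}$ sum to zero; here $R_{0}^{\mathrm{sym}}$ is a root system of type $A_{n-1}$, so for distinct $i,j,k$ the triple $(E_{\epsilon_{ik}},E_{\epsilon_{kj}},E_{\epsilon_{ji}})$ has total weight zero. On such a triple the $\widehat{\omega}_{0}$- and $\lambda$-parts drop out, and with the paper's structure constants one gets
\[
(d_{\gs}\rho_{0})(E_{\epsilon_{ik}},E_{\epsilon_{kj}},E_{\epsilon_{ji}})
=\frac{1}{\sqrt{2n}}\left(\eta^{\prime}_{ik}-\eta^{\prime}_{ij}-\eta^{\prime}_{jk}\right),
\qquad \eta^{\prime}_{ab}:=\eta_{(ab)}-\eta_{(ba)},
\]
so closedness forces the cocycle conditions $\eta^{\prime}_{ik}=\eta^{\prime}_{ij}+\eta^{\prime}_{jk}$, i.e.\ $\eta^{\prime}_{ij}=f_{i}-f_{j}$; in particular a single $\omega_{\epsilon_{ij}}\wedge\omega_{\epsilon_{ji}}$ is not closed once a third index exists. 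The Jacobi identity, which you invoke to close this gap, is irrelevant here: it is what makes the $\lambda$-sector consistent (as at the end of the proof of Theorem \ref{main}), and it says nothing about the $\eta$-sector. A dimension count confirms the failure: closed $=$ exact on $\gsl_{n}(\mathbb{C})$ gives dimension $n^{2}-1$, whereas free $\lambda$'s and $\eta$'s would give $\tfrac{3}{2}n(n-1)>n^{2}-1$ for $n\geq 3$. To be fair, the formula in the statement also displays the $\eta_{(ij)}$ as arbitrary, so your conclusion agrees with the printed theorem; but the exactness route actually produces only the constrained family $\eta^{\prime}_{ij}=-\tfrac{1}{2n}\xi(E_{ii}-E_{jj})$, $\xi\in{\mathcal S}^{*}$, so at best the one-directional reading ``every closed form can be written in this shape'' survives. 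Your proposal asserts the stronger statement that the free-$\eta$ family exhausts (and consists of) closed forms, and that is the step that fails; any correct execution of your weight-sector plan must detect and impose these extra weight-zero relations.
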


\begin{proof}
To  determine all closed $2$-forms $\omega$ on  $\gk$, we apply
Proposition \ref{rho}, with subalgebra $\gs := \gh_{0} + \gg (\{
\epsilon_{ij}\} )$ and ideal $\gp := \gg (\{
\epsilon_{ij^{\prime}}\} ).$ In terms of the decomposition
\[
\omega = \rho_{0} +\rho_{1}+\rho_{2},
\]
the form $\omega$ is closed if and only if its $\gs$-part
$\rho_0\in\Lambda^{2}(\gs^{*})$ and $\gp$-part
$\rho_1\in\Lambda^{2}(\gp^{*})$ are closed and the mixed part
$\rho_2\in\gs^{*}\wedge \gp^{*}$ satisfies (\ref{c1}) and
(\ref{c2}). Using (\ref{c2}), we determine $\rho_{2}$, as follows.
The condition (\ref{c2}) is equivalent to the following three conditions:\\
i) for any $H\in \gh_{0}$  and root vectors $E_{\epsilon_{kp}}$
and $E_{\epsilon_{ij^{\prime}}}$,
\begin{equation}\label{ecuatia1}
\rho_{2}( [H, E_{\epsilon_{kp}}], E_{\epsilon_{ij^{\prime}}})
+\rho_{2}( [E_{\epsilon_{kp}}, E_{\epsilon_{ij^{\prime}}}], H) +
\rho_{2}([E_{\epsilon_{ij^{\prime}}}, H], E_{\epsilon_{kp}})=0;
\end{equation}
ii) for any $H, H^{\prime}\in \gh_{0}$ and root vector
$E_{\epsilon_{ij^{\prime}}}$,
\begin{equation}\label{ecuatia2}
\rho_{2} ([H^{\prime}, E_{\epsilon_{ij^{\prime}}}], H) +\rho_{2}(
[E_{\epsilon_{ij^{\prime}}}, H], H^{\prime})=0;
\end{equation}
iii) for any root vectors $E_{\epsilon_{ks}}$,
$E_{\epsilon_{pq}}$, $E_{\epsilon_{ij^{\prime}}}$,
\begin{equation}\label{ecuatia3}
\rho_{2}([E_{\epsilon_{pq}}, E_{\epsilon_{ks}}],
E_{\epsilon_{ij^{\prime}}}) + \rho_{2}( [E_{\epsilon_{ks}},
E_{\epsilon_{ij^{\prime}}}], E_{\epsilon_{pq}}) + \rho_{2}(
[E_{\epsilon_{ij^{\prime}}}, E_{\epsilon_{pq}}],
E_{\epsilon_{ks}})=0.
\end{equation}
It can be checked that (\ref{ecuatia1})
is equivalent to
\begin{equation}\label{sub}
\rho_{2}= \sum_{k,j^{\prime}} \lambda_{(kj^{\prime})}\left(
\sqrt{2n} \epsilon_{kj^{\prime}}\wedge
\omega_{\epsilon_{kj^{\prime}}} +\sum_{i\neq
k}\omega_{\epsilon_{ki}}\wedge\omega_{\epsilon_{ij^{\prime}}}
\right) ,
\end{equation}
for some constants $\lambda_{(kj^{\prime})}\in\mathbb{C}.$
Moreover, with $\rho_{2}$  defined by (\ref{sub}), the relations
(\ref{ecuatia2}) and (\ref{ecuatia3}) are satisfied. Thus
condition (\ref{c2}) is equivalent to (\ref{sub}). We now show,
using condition (\ref{c1}), that $\rho_{1}=0.$ Since $\gp$ is
abelian, condition (\ref{c1}) implies that
\[
\rho_{1}([H, E_{\epsilon_{ij^{\prime}}}],
E_{\epsilon_{rs^{\prime}}}) +\rho_{1}( E_{\epsilon_{ij^{\prime}}},
[H,E_{\epsilon_{rs^{\prime}}}])=0,
\]
or
\[
(\epsilon_{i}- \epsilon_{j^{\prime}} +\epsilon_{r}
-\epsilon_{s^{\prime}})(H) \rho_{1}( E_{\epsilon_{ij^{\prime}}},
E_{\epsilon_{rs^{\prime}}})=0.
\]
Let $H:= E_{ii} - E_{jj}$ with $j\neq i,r$ (we assumed that $n\geq
3$). Since $\epsilon_{i}- \epsilon_{j^{\prime}} +\epsilon_{r}
-\epsilon_{s^{\prime}}$ takes non-zero value on $H$,
\begin{equation}\label{red}
\rho_{1}=0.
\end{equation}
Finally, it remains to determine the closed form $\rho_{0}$ on
$\gs .$  The Lie algebra  $\gs$ is reductive with semisimple part
generated by $\gg (\{ \epsilon_{ij}\})$ and center which is  the
annihilator of $\{ \epsilon_{ij}\}$ in $\gh_{0}.$ Using
Proposition \ref{rho} and the fact that any closed $2$-form on a
semisimple Lie algebra is exact, it can be shown that
\begin{equation}\label{red1}
\rho_{0} =\widehat{\omega}_{0}+\sum_{i\neq j}
\lambda_{(ij)}\epsilon_{ij}\wedge
\omega_{\epsilon_{ij}}+\frac{1}{\sqrt{2n}} \sum_{i\neq j\neq k}
\lambda_{(ik)} \omega_{\epsilon_{ij}}\wedge\omega_{\epsilon_{jk}}
+ \sum_{i\neq
j}\eta_{(ij)}\omega_{\epsilon_{ij}}\wedge\omega_{\epsilon_{ji}}
\end{equation}
where  $\lambda_{(ij)}, \eta_{(ij)}\in\mathbb{C}$  are arbitrary
constants and the $2$-form
$\widehat{\omega}_{0}\in\Lambda^{2}(\gh_{0}^{*})$ satisfies
(\ref{conditie-omega}). Combining (\ref{sub}), (\ref{red}) and
(\ref{red1}) we conclude that all closed $2$-forms on $\gk$ are
described in the theorem. The last claim is straightforward.

\end{proof}

\subsection{Generalized complex structures on
$SO_{2n-1,1}$}\label{so}

{\bf a) Description of the antiinvolution $\sigma$ which defines
$\gso_{2n}({\mathbb C})$}\\

Let $(V, (\cdot ,\cdot ) )$ be a complex Euclidean vector space of
dimension $2n\geq 6$ and $\gso (V) \simeq \gso_{2n}({\mathbb{C}})$
the associated complex orthogonal Lie algebra. We identify
$\gso(V)$ with $\Lambda^2V$ using the scalar product $(\cdot
,\cdot )$ and we choose a basis $e_i, e_{-i},\, i = 1, \cdots ,n$
of $V$ with the only non-zero scalar products $(e_i, e_{-i}) =1$.
The diagonal Cartan subalgebra $\gh \subset \gso(V)$ has   a basis
\[
\{ H_{i}:= e_i \wedge e_{-i}, \quad i=1,2,\cdots , n\}.
\]
We denote by  $\{\epsilon_{i}\}$  the dual basis of $\gh^{*}$.
Then the root system of $\gso(V)$ relative to $\gh$ is given by
\[
R:= \{ \pm\epsilon_{i}\pm \epsilon_{j},\quad i, j=1,2,\cdots
,n,i\neq j\}
\]
 and  the root vectors of a Weyl basis  are
\begin{align*}
E_{\epsilon_{i} + \epsilon_{j}}&:=
\frac{1}{\sqrt{2(n-1)}}(e_i \wedge e_{j}),\quad i < j\\
E_{-\epsilon_{i}-\epsilon_{j}}&:= -\frac{1}{\sqrt{2(n-1)}}(e_{-i}
\wedge e_{-j}),\quad i<j\\
E_{\epsilon_{i} - \epsilon_{j}}&:=
\frac{1}{\sqrt{2(n-1)}}(e_i \wedge e_{-j}),\quad i\neq j.\\
\end{align*}
The associated structure constants are given by
\begin{align*}
N_{\epsilon_{i}+\epsilon_{j},\epsilon_{k}-\epsilon_{j}} &= -\frac{1}{\sqrt{2(n-1)}}\gamma_{ij}\gamma_{ik}\\
N_{-(\epsilon_{i}+\epsilon_{j}),\epsilon_{l}+\epsilon_{j}}&= \frac{1}{\sqrt{2(n-1)}}\gamma_{ij}\gamma_{jl}\\
N_{-(\epsilon_{i}+\epsilon_{j}),\epsilon_{j}-\epsilon_{k}}& = \frac{1}{\sqrt{2(n-1)}}\gamma_{ij}\gamma_{ik}\\
N_{\epsilon_{i} -\epsilon_{j}, \epsilon_{j} -\epsilon_{k}}&=
\frac{1}{\sqrt{2(n-1)}},
\end{align*}
where $\gamma_{ij}=1$ if $i<j$ and $-1$ if $i>j.$

Consider the antilinear involution $\sigma$  of $V$ defined by
\[
\sigma (e_{\pm i}) = e_{\mp i}, \quad
1\leq i<n, \quad
\sigma (e_{\pm n}) = e_{\pm n}.
\]
It induces an antilinear involution $\sigma$ on $\gso (V)$ whose
associated real form is the Lorentzian Lie algebra
$\gso_{2n-1,1}$. The map $\sigma$ preserves the Cartan subalgebra
$\gh$ and  it acts on the weights $\epsilon_i$ as follows:
\[
\sigma (\epsilon_i) = - \epsilon_i ,\,\,  i <n,\,\,   \sigma(\epsilon_n) = \epsilon_n.
\]
Alternatively, one may also define $\gso_{2n-1,1}$ as the unique
real form of outer type of $\gso_{2n}({\mathbb C})$, whose Vogan
diagram has no painted node and the automorphism is given by
interchanging the two ends of $D_{n}.$  As described above,
$\gso_{2n-1,1}$ can be obtained as in the proof of Theorem 6.88 of
\cite{knapp} (see also Proposition \ref{remarca}), by considering the
root vectors of the Weyl basis defined above and assigning to the
nodes of $D_{n}$ the simple roots $ \epsilon_{1}-\epsilon_{2},
\cdots , \epsilon_{n-2} -\epsilon_{n-1},
\epsilon_{n-1}-\epsilon_{n}, \epsilon_{n-1}+\epsilon_{n}.$ The
roots $\epsilon_{i}-\epsilon_{i+1}$, $1\leq i\leq n-2$ are
compact. The automorphism $\theta$ of $D_{n}$ is given by
\[
\theta (\epsilon_{i}-\epsilon_{i+1}) =\epsilon_{i}-
\epsilon_{i+1}, \quad 1\leq i\leq n-2;\quad \theta
(\epsilon_{n-1}- \epsilon_{n}) = \epsilon_{n-1}+\epsilon_{n}
\]
and
\[
\gh^{\sigma} = \{ H= \sum_{k=1}^{n-1} i x_{k} H_{k} + x_{n}H_{n},
\quad x_{k}\in\mathbb{R}\}
\]
is a maximally compact Cartan subalgebra of $\gso_{2n-1,1}.$ It is
also maximally non-compact (easy check).

\vspace{10pt}

{\bf b) $\sigma$-positive systems of the Lie algebra
$\gso_{2n}({\mathbb C})$}\\

We denote by $R' \subset R$ the root system of the subalgebra
${\gso}_{2n-2}({\mathbb C})\subset {\gso}(V)$ which preserves the
vectors $e_{\pm n}$. Then
\[
 \sigma|_{R'} = -1 \,\, \mathrm{and} \,\, \sigma(\epsilon_{n-1}-
\epsilon_{n}) = - ( \epsilon_{n-1}+\epsilon_{n} ).
\]

\begin{prop}\label{clasif2}
Any  $\sigma$-positive system  $R_0 \subset R$  is equivalent   to
one of the systems:\\
 a) $R_0 = R^+ := \{ \epsilon_{i} \pm \epsilon_{j}, 1\leq i<j\leq n\}$;\\
b) $R_0 = \left( R^{+}\setminus \{ \epsilon_{n-1}
 +\epsilon_{n}\}\right) \cup
 \{ \epsilon_{n}-\epsilon_{n-1}\} ;$\\
c) $R_0 = \left( R^{+}\setminus \{ \epsilon_{n-1}
 -\epsilon_{n}\}\right) \cup
 \{ -(\epsilon_{n-1}+\epsilon_{n})\} .$

\end{prop}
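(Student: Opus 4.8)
\emph{Approach.} The plan is to split the root system $R$ of $\gso_{2n}(\mathbb{C})$ into the $\sigma$-invariant subsystem $R' = \{\pm\epsilon_i \pm \epsilon_j : 1 \le i < j \le n-1\}$ of type $D_{n-1}$, on which $\sigma$ acts as $-\mathrm{Id}$, and the remaining roots $\pm\epsilon_i \pm \epsilon_n$ $(i<n)$, which $\sigma$ permutes in pairs. First I would analyze $R_0 \cap R'$. Intersecting the relations $R_0 \cup \sigma(R_0) = R$ and $R_0 \cap \sigma(R_0) = \emptyset$ with the $\sigma$-stable set $R'$ gives $(R_0 \cap R') \cup \sigma(R_0 \cap R') = R'$ disjointly; since $\sigma|_{R'} = -\mathrm{Id}$ and $R_0 \cap R'$ is closed, this says that $P := R_0 \cap R'$ is a closed subset of $R'$ with $P \sqcup (-P) = R'$, i.e.\ a positive root system of $D_{n-1}$ by the Bourbaki result already invoked for the inner case. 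As the Weyl group $W(D_{n-1})$ acts transitively on positive systems of $R'$ and commutes with $\sigma$ (permutations and even sign changes of $\epsilon_1, \dots, \epsilon_{n-1}$ all commute with $\mathrm{diag}(-1,\dots,-1,+1)$ and fix $\epsilon_n$), I may apply such an element, which is an admissible equivalence, to normalize $R_0 \cap R'$ to the standard positive system $\{\epsilon_i \pm \epsilon_j : 1 \le i < j \le n-1\}$.

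\emph{The roots through $\epsilon_n$.} Next I would determine the roots involving $\epsilon_n$. The identities $\sigma(\epsilon_i + \epsilon_n) = -\epsilon_i + \epsilon_n$ and $\sigma(\epsilon_i - \epsilon_n) = -\epsilon_i - \epsilon_n$ show that these split into $\sigma$-orbits of size two, so $R_0$ contains exactly one root $a_i = \eta_i \epsilon_i + \epsilon_n$ and one root $b_i = \zeta_i \epsilon_i - \epsilon_n$ for each $i < n$, with signs $\eta_i, \zeta_i \in \{\pm 1\}$ still to be determined. Closure of $R_0$ pins down these signs: for $i \neq j$ the sum $a_i + b_j = \eta_i \epsilon_i + \zeta_j \epsilon_j$ is a root of $R'$, hence must lie in the normalized standard positive system, which for $i < j$ forces $\eta_i = +1$ and for $i > j$ forces $\zeta_j = +1$. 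Therefore $\eta_i = \zeta_i = +1$ for all $i \le n-2$, and only the pair $(\eta_{n-1}, \zeta_{n-1})$ remains free, giving four possibilities.

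\emph{Reading off the cases.} Substituting the four sign choices gives: $(+,+)$ yields $R^+$ (case a)), $(-,+)$ yields case b), and $(+,-)$ yields case c). The main obstacle is the fourth choice $(-,-)$, which produces a perfectly valid closed $\sigma$-positive system that is not literally on the list. I would dispose of it by exhibiting the Weyl element $w_0$ that changes the signs of $\epsilon_{n-1}$ and $\epsilon_n$ simultaneously: it lies in $W(D_n)$ (an even number of sign changes), commutes with $\sigma$, preserves the normalized standard positive system of $R'$ (it only swaps $\epsilon_i + \epsilon_{n-1} \leftrightarrow \epsilon_i - \epsilon_{n-1}$ there), and carries the $(-,-)$ system to $R^+$. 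Hence the fourth case is equivalent to a), and every $\sigma$-positive system is equivalent to one of a), b), c).

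\emph{Where the difficulty lies.} The delicate point throughout is the bookkeeping of which transformations are admissible, namely which Weyl elements commute with $\sigma$ and fix the distinguished index $n$; once that stabilizer is identified (signed permutations with $\pi(n)=n$), both the closure argument fixing the signs and the collapse of the $(-,-)$ case are routine linear-algebra checks. I would also verify directly that the three listed systems are genuinely closed and satisfy $R_0 \cup \sigma(R_0) = R$, $R_0 \cap \sigma(R_0) = \emptyset$, so that the classification is complete in both directions.
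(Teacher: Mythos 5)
Your proof is correct and takes essentially the same route as the paper's: split off the $\sigma$-anti-invariant subsystem $R'$ of type $D_{n-1}$, normalize $R_0\cap R'$ to the standard positive system by a Weyl element commuting with $\sigma$, reduce to a four-way case analysis on the roots involving $\epsilon_n$, and collapse the fourth case $(-,-)$ to $R^+$ via the simultaneous sign change of $\epsilon_{n-1},\epsilon_n$, which is precisely the paper's reflection composition $s_{\epsilon_{n-1}-\epsilon_{n}}\circ s_{-(\epsilon_{n-1}+\epsilon_{n})}$. Your explicit sign-bookkeeping $(\eta_i,\zeta_i)$ forced by closure merely spells out what the paper compresses into the phrase ``the closed systems spanned by $R'_+$ and any of the pairs.''
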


\begin{proof} Since $\sigma =-1$ on $R'$,  we  may assume that $R_0$
contains  the  positive root  system $ R'_+= \{\epsilon_i \pm
\epsilon_j,\, i<j <n\}$ of $R'$. Let $\alpha_{n-1}:=\epsilon_{n-1}
-\epsilon_{n}$ and
 $\alpha_{n}:= \epsilon_{n-1}+\epsilon_{n}.$
Since $\sigma$ interchanges the
pairs $(\alpha_{n-1}, \alpha_n)$  and $(- \alpha_n, -
\alpha_{n-1})$, the system  $R_{0}$ must contain one of the pairs
 \[
 (\alpha_{n-1}, \alpha_n),\,(-\alpha_{n-1}, -\alpha_n),
(\alpha_{n-1}, -\alpha_{n-1}),\, (\alpha_{n}, -\alpha_n) .
\]
One can easily check that the composition
$s_{\epsilon_{n-1}-\epsilon_{n}}\circ
s_{-(\epsilon_{n-1}+\epsilon_{n})}$ of reflections exchanges the
first two pairs of roots. Thus the closed systems spanned by
$R'_+$ and any of the first two pairs are equivalent and give the
positive root system $R^+$. The closed systems spanned by
$R_{+}^{\prime}$ and the remaining two pairs give the systems {\it
b)} and {\it c)}.
\end{proof}

\vspace{10pt}

{\bf c) $SO_{2n-1,1}$-admissible pairs.}\\

Since  the $\sigma$-positive  system $R_0$ of type {\it a)} found
in Proposition \ref{clasif2} is a system of positive roots, the
corresponding  admissible  pairs  can be  described using the
method of Theorem \ref{main}. Now, we describe the
$\gso_{2n-1,1}$-admissible pairs $(\gk, \omega)$,  where
$\gk\subset \gso_{2n}({\mathbb C})$ is a regular subalgebra with
the root system $R_0$ of type {\it c)} (the case {\it b)} can be
treated similarly). Let
\begin{equation}\label{gko}
\gk := \gh_{0} + \gg( R_{0})\subset \gso_{2n}(\mathbb{C}),
\end{equation}
where  $\gh_{0}$ is included in the diagonal  Cartan subalgebra
$\gh$ of $\gso_{2n}(\mathbb{C})$ and $R_{0}$ is given by
Proposition \ref{clasif2} {\it c)}:
\[
R_0 = \left( R^{+}\setminus \{ \epsilon_{n-1}
 -\epsilon_{n}\}\right) \cup
 \{ -(\epsilon_{n-1}+\epsilon_{n})\} ,\quad R^{+}= \{
 \epsilon_{i}\pm \epsilon_{j}, 1\leq i<j\leq n\} .
\]
Since
\begin{equation}\label{ry}
R_{0}^{\mathrm{sym}}=\{ \pm (\epsilon_{n-1}+\epsilon_{n})\}
\end{equation}
and $\sigma (\epsilon_{n-1}+\epsilon_{n})= - \epsilon_{n-1} +\epsilon_{n}$,
condition (\ref{condfin-1}) is satisfied.
Since $\gk$ is a subalgebra, (\ref{ry})
together with
\[
[E_{\epsilon_{n-1}+\epsilon_{n}}, E_{-(\epsilon_{n-1}+\epsilon_{n})}]
=\frac{1}{2(n-1)}(H_{n-1}+H_{n})
\]
imply that $H_{n-1}+H_{n}$ belongs to $\gh_{0}.$ Thus,
\[
\gh_{0} = \left(
\mathrm{Ker}(\epsilon_{n-1}+\epsilon_{n})\cap\gh_{0}\right) \oplus
\mathrm{Span}(H_{n-1}+H_{n}).
\]
Let
\[
R_{0}^{\prime}: = R^{+}\setminus \{ \epsilon_{n-1}\pm
\epsilon_{n}\} = R_{0}\setminus \{ \pm
(\epsilon_{n-1}+\epsilon_{n}) \} .
\]
For simplicity, we assume that for any $\alpha\in R^{\prime}_{0}$,
\begin{equation}\label{revizuit}
\alpha\vert_{\mathrm{Ker}(\epsilon_{n-1}+\epsilon_{n})\cap
\gh_{0}}\not\equiv 0
\end{equation}
Remark that the above condition is automatically true, when
$\alpha = \epsilon_{i} \pm \epsilon_{j}$, $i< j\leq n-1$ (recall
that $\sigma\vert_{R^{\prime}}= -1$ and $\gh_{0}+ \bar{\gh}_{0}=
\gh$). It also holds for any $\alpha\in R^{\prime}_{0}$, when
$\gh_{0} = \gh$.

\begin{thm} Any  closed $2$-form $\omega$ on the Lie algebra  $\gk$
defined in (\ref{gko})  is given by

\begin{align*}
\omega &= (\epsilon_{n-1}+ \epsilon_{n})\wedge \left( a
\omega_{\epsilon_{n-1}+ \epsilon_{n}} + b
\omega_{-(\epsilon_{n-1}+ \epsilon_{n})}\right)+
c\omega_{\epsilon_{n-1}+\epsilon_{n}}\wedge
\omega_{-(\epsilon_{n-1}+\epsilon_{n})}\\
&+\widehat{\omega}_{0} + \sum_{\alpha\in
R_{0}^{\prime}}c_{\alpha}\alpha\wedge \omega_{\alpha} +\frac{1}{2}
\sum_{\alpha\in R_{0}^{\prime}} N_{\alpha\beta} c_{\alpha
+\beta}\omega_{\alpha}\wedge\omega_{\beta}\\
& +\frac{1}{\sqrt{2(n-1)}}\sum_{i<n-1}
\omega_{\epsilon_{n-1}+\epsilon_{n}}\wedge\left(
c_{\epsilon_{i}+\epsilon_{n-1}}\omega_{\epsilon_{i} -\epsilon_{n}}
- c_{\epsilon_{i}+\epsilon_{n}}\omega_{\epsilon_{i}
-\epsilon_{n-1}}\right)\\
& +\frac{1}{\sqrt{2(n-1)}}\sum_{i<n-1}
\omega_{-(\epsilon_{n-1}+\epsilon_{n})}\wedge\left(
c_{\epsilon_{i}-\epsilon_{n}}\omega_{\epsilon_{i} +\epsilon_{n-1}}
- c_{\epsilon_{i}-\epsilon_{n-1}}\omega_{\epsilon_{i}
+\epsilon_{n}}\right)\\
&+\frac{1}{2} \sum_{i<n-1} (\epsilon_{n-1}+\epsilon_{n})\wedge
\left( c_{\epsilon_{i} +\epsilon_{n}}\omega_{\epsilon_{i}
+\epsilon_{n}} + c_{\epsilon_{i}+ \epsilon_{n-1}}\wedge
\omega_{\epsilon_{i}+\epsilon_{n-1}}\right)\\
& -\frac{1}{2} \sum_{i<n-1} (\epsilon_{n-1}+\epsilon_{n})\wedge
\left( c_{\epsilon_{i} -\epsilon_{n}}\omega_{\epsilon_{i}
-\epsilon_{n}} + c_{\epsilon_{i}- \epsilon_{n-1}}\wedge
\omega_{\epsilon_{i}-\epsilon_{n-1}}\right)
\end{align*}
where $\widehat{\omega}_{0}\in \Lambda^{2}(\gh_{0})$ is such that
\[
\widehat{\omega}_{0}(H_{n-1}+H_{n},\cdot )=0,
\]
$a, b,c\in\mathbb{C}$ and $c_{\alpha}\in \mathbb{C}$, for any
$\alpha \in R_{0}^{\prime}.$ The pair $(\gk, \omega)$ is
$\gso_{2n-1,1}$-admissible, hence it defines a regular generalized
complex structure on $SO_{2n-1,1}$,
 if and only if   the real 2-form
$\mathrm{Im}\left(\widehat{\omega}_{0}\right)$ is non-degenerate
on $\gh_{0}\cap\gso_{2n-1,1}$.
\end{thm}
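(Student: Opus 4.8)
The plan is to determine the closed $2$-forms through Proposition \ref{rho}, applied to the semidirect decomposition $\gk = \gs\oplus\gp$ in which $\gs := \gh_{0}+\gg(R_{0}^{\mathrm{sym}}) = \gh_{0}+\gg(\{\pm(\epsilon_{n-1}+\epsilon_{n})\})$ is the reductive subalgebra obtained by adjoining to $\gh_{0}$ the $\gsl_{2}$-triple spanned by $E_{\pm(\epsilon_{n-1}+\epsilon_{n})}$ and $H_{n-1}+H_{n}$ (the latter lies in $\gh_{0}$ by (\ref{ry})), and $\gp := \gg(R_{0}^{\prime})$ is the ideal spanned by the remaining root spaces. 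The first task is to certify this decomposition: $\gs$ is a subalgebra because $[E_{\epsilon_{n-1}+\epsilon_{n}}, E_{-(\epsilon_{n-1}+\epsilon_{n})}]$ is a multiple of $H_{n-1}+H_{n}\in\gh_{0}$, and $\gp$ is an ideal because $R_{0}^{\prime}$ consists of positive roots, none of which equals the simple roots $\epsilon_{n-1}\pm\epsilon_{n}$, so that any root produced by bracketing an element of $R_{0}^{\prime}$ with an element of $R_{0}$ again lands in $R_{0}^{\prime}$ (a routine check on root strings). Writing $\omega = \rho_{0}+\rho_{1}+\rho_{2}$ as in Proposition \ref{rho}, closedness reduces to: $\rho_{0}$ closed on $\gs$, $\rho_{1}$ closed on $\gp$, and the two compatibility relations (\ref{c1}), (\ref{c2}).

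I would then treat the reductive part $\gs$ first. The only nonabelian bracket in $\gs$ is $[E_{\epsilon_{n-1}+\epsilon_{n}}, E_{-(\epsilon_{n-1}+\epsilon_{n})}] = \frac{1}{2(n-1)}(H_{n-1}+H_{n})$; evaluating $d_{\gs}\rho_{0}$ on the triples $(H, E_{\epsilon_{n-1}+\epsilon_{n}}, E_{-(\epsilon_{n-1}+\epsilon_{n})})$ and $(H, H', E_{\pm(\epsilon_{n-1}+\epsilon_{n})})$ with $H, H'\in\gh_{0}$ forces $\rho_{0}$ to be the sum of a form $\widehat{\omega}_{0}$ on $\gh_{0}$ satisfying $\widehat{\omega}_{0}(H_{n-1}+H_{n},\cdot) = 0$ together with the three $\gsl_{2}$-type terms carrying the free constants $a$, $b$, $c$. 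This is the exact analogue of the closedness condition (\ref{conditie-closed}), and accounts for the first line of the asserted formula. On the ideal $\gp = \gg(R_{0}^{\prime})$ the set $R_{0}^{\prime}$ behaves like a system of positive roots, so the mechanism of Theorem \ref{main} applies: invoking the hypothesis (\ref{revizuit}) that each $\alpha\in R_{0}^{\prime}$ restricts nontrivially to $\gh_{0}$, the instances of (\ref{c1}), (\ref{c2}) with $\gs$-arguments in $\gh_{0}$ produce the diagonal terms $\sum_{\alpha\in R_{0}^{\prime}}c_{\alpha}\,\alpha\wedge\omega_{\alpha}$ and $\rho_{1} = \frac{1}{2}\sum N_{\alpha\beta}c_{\alpha+\beta}\,\omega_{\alpha}\wedge\omega_{\beta}$, whose closedness $d_{\gp}\rho_{1}=0$ is once more the Jacobi identity for the structure constants.

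The genuinely new feature, absent in the inner case of Theorem \ref{main}, is the coupling produced by the brackets of $E_{\pm(\epsilon_{n-1}+\epsilon_{n})}$ into $\gp$, such as $[E_{\epsilon_{n-1}+\epsilon_{n}}, E_{\epsilon_{i}-\epsilon_{n}}] = N\,E_{\epsilon_{i}+\epsilon_{n-1}}$ and its analogues. Feeding these into the remaining instances of (\ref{c1}) and (\ref{c2}), with one $\gs$-argument equal to $E_{\pm(\epsilon_{n-1}+\epsilon_{n})}$, ties the coefficients of the cross terms $\omega_{\pm(\epsilon_{n-1}+\epsilon_{n})}\wedge\omega_{\cdot}$ and of the mixed terms $(\epsilon_{n-1}+\epsilon_{n})\wedge\omega_{\cdot}$ to the already determined constants $c_{\alpha}$; solving this coupled linear system yields precisely the remaining four sums in the statement. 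I expect this coupling step to be the main obstacle, since one must keep careful track of the signs $\gamma_{ij}$ entering the $\gso_{2n}$ structure constants and verify that (\ref{c1}), (\ref{c2}) are not merely necessary but also sufficient, i.e. that the displayed $\omega$ is genuinely closed with no hidden constraint. Finally, for the admissibility assertion I would use that, $R_{0}$ being $\sigma$-positive, $\gk\cap\bar{\gk}$ reduces to its Cartan part and hence $\gl = \gh_{0}\cap\gso_{2n-1,1}$; since every root covector $\omega_{\alpha}$, and thus every term of $\omega$ other than $\widehat{\omega}_{0}$, annihilates $\gh_{0}$, we get $\omega|_{\gl} = \widehat{\omega}_{0}|_{\gl}$, so that $(\gk,\omega)$ is $\gso_{2n-1,1}$-admissible exactly when $\mathrm{Im}(\widehat{\omega}_{0})$ is non-degenerate on $\gl$.
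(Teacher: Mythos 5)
Your proposal is correct and it does prove the theorem, but it runs Proposition \ref{rho} on a genuinely different semidirect decomposition from the one the paper uses, and the difference is worth recording. You place the whole Cartan part in the subalgebra, $\gs =\gh_{0}+\gg_{\epsilon_{n-1}+\epsilon_{n}}+\gg_{-(\epsilon_{n-1}+\epsilon_{n})}$ and $\gp =\gg (R_{0}^{\prime})$, whereas the paper takes the three-dimensional subalgebra $\gs =\mathbb{C}(H_{n-1}+H_{n})+\gg_{\epsilon_{n-1}+\epsilon_{n}}+\gg_{-(\epsilon_{n-1}+\epsilon_{n})}$ and absorbs the complement into the ideal, $\gp =\left(\mathrm{Ker}(\epsilon_{n-1}+\epsilon_{n})\cap \gh_{0}\right)+\gg (R_{0}^{\prime})$; this is an ideal precisely because $\mathrm{Ker}(\epsilon_{n-1}+\epsilon_{n})\cap\gh_{0}$ commutes with $E_{\pm (\epsilon_{n-1}+\epsilon_{n})}$. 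The paper's choice buys two simplifications: every $2$-form on its $\gs\simeq \gsl_{2}(\mathbb{C})$ is automatically closed, so the $a,b,c$ terms cost nothing, and its ideal has the ``inner-type'' shape $\gh^{\prime}+\gg (R_{0}^{\prime})$ of Theorem \ref{main}, so the closed forms on $\gp$ --- giving $\widehat{\omega}_{0}$, the diagonal terms $\sum_{\alpha}c_{\alpha}\alpha\wedge\omega_{\alpha}$ and the quadratic terms in one stroke, equation (\ref{condi-2}) --- are obtained by quoting that mechanism, leaving the mixed conditions (\ref{c1}), (\ref{c2}) to produce only the cross terms in $\omega_{\pm (\epsilon_{n-1}+\epsilon_{n})}$ and the correction $\frac{1}{2}\sum_{\alpha}c_{\alpha}\alpha (H_{n-1}+H_{n})(\epsilon_{n-1}+\epsilon_{n})\wedge\omega_{\alpha}$. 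Your choice redistributes this work: your computation of $\rho_{0}$ on the larger $\gs$ is correct (it does recover the kernel condition on $\widehat{\omega}_{0}$ and the constants $a,b,c$, the analogue of (\ref{conditie-closed})), but then the diagonal terms, the cross terms and all of $\rho_{1}$ must be extracted from (\ref{c1}) and (\ref{c2}); in your split the $\gh_{0}^{*}\wedge\gp^{*}$ component of $\rho_{2}$ comes out as $\sum_{\alpha}c_{\alpha}\left(\alpha\vert_{\gh_{0}}\right)\wedge\omega_{\alpha}$, which is exactly the second line of the statement merged with the last two lines, so the answers agree up to bookkeeping. The one caveat is that the ``coupled linear system'' you defer to the end carries strictly more of the proof in your arrangement than in the paper's: it is precisely what the paper computes explicitly (its conditions i)--iii) equivalent to (\ref{c2-1}), relation (\ref{rev1}), and the resulting expression (\ref{fin})), including the sufficiency check that no residual constraint on the $c_{\alpha}$ survives; a complete write-up along your lines must carry out that computation, and, as in the paper, it is the non-vanishing hypothesis (\ref{revizuit}) --- together with its analogue for sums $\alpha +\beta\notin R$ of two roots of $R_{0}^{\prime}$ --- that kills all unwanted components of $\rho_{1}$ and $\rho_{2}$. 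Your closing admissibility argument ($R_{0}\cap\sigma (R_{0})=\emptyset$ forces $\gk\cap\bar{\gk}=\gh_{0}\cap\bar{\gh}_{0}$, and every non-Cartan term of $\omega$ vanishes on $\gh_{0}\times\gh_{0}$, so $\omega\vert_{\gl}=\widehat{\omega}_{0}\vert_{\gl}$) is correct, and is in fact more detailed than the paper's remark that this claim is trivial.
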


\begin{proof}
To describe  all  closed $2$-forms on $\gk$ ,
we  set
\[
\gp :=\left( \mathrm{Ker}(\epsilon_{n-1}+\epsilon_{n})\cap
\gh_{0}\right) + \gg(R^{\prime}_{0})
\]
and
\[
\gs := \mathbb{C} (H_{n-1}+H_{n}) +
\gg_{\epsilon_{n-1}+\epsilon_{n}} +
\gg_{-(\epsilon_{n-1}+\epsilon_{n})}.
\]
Then   $\gk= \gs \oplus \gp$ is a semidirect decomposition, with
subalgebra $\gs$ and ideal $\gp$,  and we can use Proposition
\ref{rho} to describe all closed $2$-forms on $\gk.$ Decompose
$\omega$ as
\[
\omega = \rho_{0} +\rho_{1}+\rho_{2},
\]
where $\rho_{0}\in\Lambda^{2}(\gs^{*})$ and
$\rho_{1}\in\Lambda^{2}(\gp^{*})$ are the $\gs$, respectively
$\gp$-parts of $\omega$ and $\rho_{2}\in \gp^{*}\wedge \gs^{*}$ is
the mixed part. It is easy to see that any 2-form
\begin{equation}\label{rho-0}
\rho_0 = (\epsilon_{n-1}+\epsilon_{n})\wedge\left(
a\omega_{\epsilon_{n-1}+\epsilon_{n}} +
b\omega_{-(\epsilon_{n-1}+\epsilon_{n})}\right)
+c\omega_{\epsilon_{n-1}+\epsilon_{n}}\wedge
\omega_{-(\epsilon_{n-1}+\epsilon_{n})}
\end{equation}
on $\gs$ is closed. Now we  determine all closed $2$-forms
$\rho_{1}$ on $\gp$. Using (\ref{revizuit}), an argument like in
Theorem \ref{main} shows that any closed $2$-form on $\gp$ is
given by
\begin{equation}\label{condi-2}
\rho_{1}=\widehat{\omega}_{0} +\sum_{\alpha\in
R_{0}^{\prime}}c_{\alpha}\alpha\wedge \omega_{\alpha} +\frac{1}{2}
\sum_{\alpha , \beta\in R_{0}^{\prime}}N_{\alpha\beta} c_{\alpha
+\beta} \omega_{\alpha}\wedge\omega_{\beta}
\end{equation}
where $c_{\alpha}\in {\mathbb C}$ for any $\alpha\in
R_{0}^{\prime}$ and $\widehat{\omega}_{0}$ is any $2$-form defined
on $\mathrm{Ker}(\epsilon_{n-1}+\epsilon_{n})\cap\gh_{0}$. It
remains to determine  the mixed  part $\rho_{2}$ of $\rho$, which
satisfies
\begin{equation}\label{c2-1}
\rho_{2}([s, s^{\prime}], p) + \rho_{2}([s^{\prime}, p], s)
+\rho_{2}([p, s], s^{\prime})=0
\end{equation}
and is related to $\rho_{1}$ by
\begin{equation}\label{c1-1}
\rho_{2}(s, [p, p^{\prime}])  = \rho_{1}([s, p], p^{\prime})
+\rho_{1}(p, [s,p^{\prime}])
\end{equation}
for any $s, s^{\prime}\in \gs$ and $p, p^{\prime}\in \gp .$ A
straightforward computation shows that (\ref{c2-1}) is equivalent
to the
following conditions:\\

i) for any $H\in
\mathrm{Ker}(\epsilon_{n-1}+\epsilon_{n})\cap\gh_{0}$,
\begin{equation}\label{revizuit1}
\rho_{2}(H, \cdot )=0.
\end{equation}

ii) for any $\alpha\in R^{\prime}_{0}$ such that $\alpha
+\epsilon_{n-1}+\epsilon_{n}\notin R$ and $\alpha
-(\epsilon_{n-1}+\epsilon_{n})\notin R$,
$\rho_{2}(H_{n-1}+H_{n}, E_{\alpha})=0$;\\

iii) for any $\alpha\in R_{0}^{\prime}$,
\begin{align*}
\rho_{2}( E_{\epsilon_{n-1}+\epsilon_{n}},
E_{\alpha})=&\frac{N_{\epsilon_{n-1}+\epsilon_{n}, \alpha}}{\alpha
(H_{n-1}+H_{n})+2}\rho_{2} (H_{n-1}+H_{n}, E_{\alpha
+\epsilon_{n-1}+\epsilon_{n}})\\
\rho_{2}( E_{-(\epsilon_{n-1}+\epsilon_{n})},
E_{\alpha})=&\frac{N_{-(\epsilon_{n-1}+\epsilon_{n}),
\alpha}}{\alpha (H_{n-1}+H_{n})-2}\rho_{2} (H_{n-1}+H_{n},
E_{\alpha -(\epsilon_{n-1}+\epsilon_{n})}).
\end{align*}
On the other hand, with $\rho_{1}$ and $\rho_{2}$ as above,
relation (\ref{c1-1}) is equivalent to
\begin{equation}\label{rev1}
c_{\alpha} \alpha (H_{n-1}+H_{n}) =\rho_{2}( H_{n-1}+H_{n},
E_{\alpha}),\quad\forall \alpha\in R_{0}^{\prime}
\end{equation}
(here we use condition (\ref{revizuit})). Combining this relation
with the above expressions of $\rho_{2}(
E_{\epsilon_{n-1}+\epsilon_{n}}, E_{\alpha})$ and $\rho_{2}(
E_{-(\epsilon_{n-1}+\epsilon_{n})}, E_{\alpha})$ we get: for any
$\alpha\in R^{\prime}_{0}$,
\begin{align}
\label{ealpha0}\rho_{2}( E_{\epsilon_{n-1}+\epsilon_{n}},
E_{\alpha})&= N_{\epsilon_{n-1}+\epsilon_{n},\alpha}c_{\alpha
+\epsilon_{n-1}+\epsilon_{n}}\\
\label{ealpha}\rho_{2}( E_{-(\epsilon_{n-1}+\epsilon_{n})},
E_{\alpha})&= N_{-(\epsilon_{n-1}+\epsilon_{n}),\alpha} c_{\alpha
-(\epsilon_{n-1}+\epsilon_{n})}.
\end{align}
From (\ref{revizuit1}), (\ref{rev1}), (\ref{ealpha0}) and
(\ref{ealpha}), we get

\begin{align*}
\rho_{2}&= \sum_{\alpha\in R_{0}^{\prime}}
N_{\epsilon_{n-1}+\epsilon_{n},\alpha}c_{\alpha
+\epsilon_{n-1}+\epsilon_{n}}
\omega_{\epsilon_{n-1}+\epsilon_{n}}\wedge\omega_{\alpha}\\
& + \sum_{\alpha\in R_{0}^{\prime}}
N_{-(\epsilon_{n-1}+\epsilon_{n}),\alpha}c_{\alpha
-(\epsilon_{n-1}+\epsilon_{n})}\omega_{-(\epsilon_{n-1}+\epsilon_{n})}\wedge
\omega_{\alpha} \\
&+\frac{1}{2} \sum_{\alpha\in R^{\prime}_{0}} c_{\alpha} \alpha
(H_{n-1}+ H_{n}) (\epsilon_{n-1}+\epsilon_{n})\wedge
\omega_{\alpha}.
\end{align*}
Using the expression of the structure constants $N_{\alpha\beta}$
we can further simplify $\rho_{2}$ and we conclude:
\begin{align}
\nonumber\rho_{2}&=\frac{1}{\sqrt{2(n-1)}}\sum_{i<n-1}
\omega_{\epsilon_{n-1}+\epsilon_{n}}\wedge\left(
c_{\epsilon_{i}+\epsilon_{n-1}}\omega_{\epsilon_{i} -\epsilon_{n}}
- c_{\epsilon_{i}+\epsilon_{n}}\omega_{\epsilon_{i}
-\epsilon_{n-1}}\right)\\
\nonumber& +\frac{1}{\sqrt{2(n-1)}}\sum_{i<n-1}
\omega_{-(\epsilon_{n-1}+\epsilon_{n})}\wedge\left(
c_{\epsilon_{i}-\epsilon_{n}}\omega_{\epsilon_{i} +\epsilon_{n-1}}
- c_{\epsilon_{i}-\epsilon_{n-1}}\omega_{\epsilon_{i}
+\epsilon_{n}}\right)\\
\nonumber&+\frac{1}{2} \sum_{i<n-1}
(\epsilon_{n-1}+\epsilon_{n})\wedge \left( c_{\epsilon_{i}
+\epsilon_{n}}\omega_{\epsilon_{i} +\epsilon_{n}} +
c_{\epsilon_{i}+ \epsilon_{n-1}}\wedge
\omega_{\epsilon_{i}+\epsilon_{n-1}}\right)\\
\label{fin}& -\frac{1}{2} \sum_{i<n-1}
(\epsilon_{n-1}+\epsilon_{n})\wedge \left( c_{\epsilon_{i}
-\epsilon_{n}}\omega_{\epsilon_{i} -\epsilon_{n}} +
c_{\epsilon_{i}- \epsilon_{n-1}}\wedge
\omega_{\epsilon_{i}-\epsilon_{n-1}}\right) .
\end{align}
Combining (\ref{rho-0}), (\ref{condi-2}) and (\ref{fin}) we get
the first claim. The second claim is trivial.

\end{proof}

\subsection{Generalized complex structures on the real Lie groups
$E_6$ of outer type}\label{ext}

{\bf a) Description of the real forms $(\ge_{6})^{\sigma}$ of outer type of  $\ge_6$}\\

We follow the description of the exceptional  complex Lie algebra
$\ge_6$ given in \cite[p.\,80]{sam}. The  complex Lie algebra
$\ge_{6}$ has dimension $78$ and rank $6$. We take $\gh =
\mathbb{C}^{6}$ for the Cartan subalgebra. Let $\{ e_{1},\cdots ,
e_{6}\}$ be the standard basis of $\mathbb{C}^{6}$, with dual
basis $\{ \epsilon_{1},\cdots , \epsilon_{6}\} .$ The Killing form
restricted to $\gh$ is
\[
\langle x, y\rangle = 24
\sum_{i=1}^{6}\epsilon_{i}(x)\epsilon_{i}(y)
+8(\sum_{i}\epsilon_{i}(x))(\sum_{j}\epsilon_{j}(y)) .
\]
The root system $R$ is formed by $\pm (\epsilon_{i}-\epsilon_{j})$
with $1\leq i < j\leq 6$, $\pm (\epsilon_{i}+\epsilon_{j}+
\epsilon_{k})$ with $1\leq i<j<k\leq 6$ and $\pm
(\epsilon_{1}+\cdots +\epsilon_{6}).$ A system of simple roots is
\[
\Pi = \{ \alpha_{1} = \epsilon_{1}-\epsilon_{2}, \alpha_{2}=
\epsilon_{2}- \epsilon_{3}, \cdots ,
\alpha_{5}=\epsilon_{5}-\epsilon_{6}, \alpha_{6}= \epsilon_{4}
+\epsilon_{5}+\epsilon_{6}\} .
\]
The complex Lie algebra $\ge_{6}$ has two real forms of outer
type, with maximally compact subalgebras $\gf_{4}$ and $\gsp_{4}$.
The Vogan diagram of $\ge_{6}(\gf_{4})$ has no painted node and an
automorphism of order two, given by the horizontal reversing in
the Dynkin diagram of $\ge_{6}.$ The Vogan diagram of
$\ge_{6}(\gsp_{4})$ is the same, the only difference being that
the triple node from the Dynkin diagram is painted, see
\cite[p.\,361]{knapp}. This means that the Cartan involutions of
$\ge_{6}(\gf_{4})$ and $\ge_{6}(\gsp_{4})$ induce the same
canonical order two automorphism of the Dynkin diagram of
$\ge_{6}$, given, in terms of the simple roots from $\Pi$, by
\begin{align*}
\theta (\alpha_{1}) &= \alpha_{5},\quad\theta (\alpha_{2}) = \alpha_{4},\\
\theta (\alpha_{3}) &=\alpha_{3},\quad\theta
(\alpha_{6})=\alpha_{6}.
\end{align*}
Similarly, the defining antiinvolutions $\sigma$ of
$\ge_{6}(\gf_{4})$ and $\ge_{6}(\gsp_{4})$ induce the same action
on $R$:
\[
\sigma (\alpha ) = -\theta (\alpha) , \quad \forall \alpha \in R.
\]
We shall denote by $\gh^{\sigma}=\gh_{\ge_{6}(\gf_{4})}=
\gh_{\ge_{6}(\gsp_{4})}$ the common maximally compact Cartan
subalgebras of $\ge_{6}(\gf_{4})$ and $\ge_{6}(\gsp_{4})$, defined
as in (\ref{cartan-special}). For $\ge_{6}(\gf_{4})$ both roots
$\alpha_{3}$ and $\alpha_{6}$ are compact, while for
$\ge_{6}(\gsp_{4})$, $\alpha_{3}$ is non-compact and $\alpha_{6}$
is compact, with respect to $\gh^{\sigma}$.

\vspace{10pt}

{\bf b) $\sigma$-positive systems of the Lie algebra
$\ge_6$}\\

We denote by the same symbol $\sigma$ the antiinvolutions of
$\ge_{6}$ which define the real forms $\ge_{6}(\gf_{4})$ and
$\ge_{6}(\gsp_{4})$, like in the previous paragraph. The following
Proposition describes all $\sigma$-positive systems in $R$.
\begin{prop}\label{positiveE6}
Any $\sigma$-positive system  of $R$ is equivalent  to one of the
following $\sigma$-positive systems:
\begin{align*}
R_{0}^{(1)}&= \{ \pm (\epsilon_{i} -\epsilon_{j})\}_{1\leq i<j\leq
3}\cup \{ \epsilon_{i}-\epsilon_{j}\}_{i\leq 3,j\geq 4}\cup\{
\epsilon_{i}
+\epsilon_{j}+\epsilon_{k}, \epsilon_{1}+\cdots +\epsilon_{6}\};\\
R_{0}^{(2)}&= \{ \pm (\epsilon_{i} -\epsilon_{j})\}_{1\leq
i<j\leq3} \cup\{ \epsilon_{i}-\epsilon_{j}\}_{i\leq3,j\geq 4}\cup
\{\epsilon_{i}+\epsilon_{j}+\epsilon_{k}\neq\epsilon_{4}+
\epsilon_{5}+\epsilon_{6}\}\\
&\cup \{ -(\epsilon_{4}+\epsilon_{5}+\epsilon_{6}),
\epsilon_{1}+\cdots +\epsilon_{6}\};\\
R_{0}^{(3)}&=\{ \pm (\epsilon_{i}-\epsilon_{j})\}_{1\leq i<j\leq
3} \cup \{ \epsilon_{i}-\epsilon_{j}\}_{i\leq 3,j\geq 4} \cup \{
-(\epsilon_{i}+\epsilon_{j}+\epsilon_{k})\}\\
& \cup\{ -(\epsilon_{1}+\cdots +\epsilon_{6})\};\\
R_{0}^{(4)}&= \{ \pm (\epsilon_{i}-\epsilon_{j})\}_{1\leq i<j\leq
3}\cup \{\epsilon_{i}-\epsilon_{j}\}_{i\leq 3,j\geq 4}\cup \{
-(\epsilon_{i}+
\epsilon_{j}+\epsilon_{k})\neq -(\epsilon_{1}+\epsilon_{2}+\epsilon_{3})\}\\
&\cup \{ \epsilon_{1}+\epsilon_{2}+\epsilon_{3},
-(\epsilon_{1}+\cdots +
\epsilon_{6})\};\\
R_{0}^{(5)}&= \{\pm (\epsilon_{i}-\epsilon_{j})\}_{1\leq i<j\leq
3}\cup\{ \epsilon_{i}-\epsilon_{j}\}_{i\leq 3,j\geq 4}\cup
\{ -(\epsilon_{i}+\epsilon_{5}+\epsilon_{6})\}_{i\leq 4}\\
&\cup \{ -(\epsilon_{i}+\epsilon_{4}+\epsilon_{6})\}_{i\leq 3}
\cup \{\epsilon_{i}+\epsilon_{j}+\epsilon_{6}\}_{i,j\leq 3}\cup \{
- (\epsilon_{i}+\epsilon_{4}+\epsilon_{5})\}_{i\leq 3}\\
& \cup\{ \epsilon_{i}+\epsilon_{j}+\epsilon_{5}\}_{i,j\leq 3} \cup
\{\epsilon_{i}+\epsilon_{j}+\epsilon_{4}\}_{i,j\leq 3} \cup \{
\epsilon_{1}+\epsilon_{2}+\epsilon_{3},\epsilon_{1}+\cdots
+\epsilon_{6}\};\\
R_{0}^{(6)}&=\{\pm (\epsilon_{i}-\epsilon_{j})\}_{1\leq i<j\leq
3}\cup\{ \epsilon_{i}-\epsilon_{j}\}_{i\leq 3,j\geq 4}\cup
\{ -(\epsilon_{i}+\epsilon_{5}+\epsilon_{6})\}_{i\leq 4}\\
&\cup \{ -(\epsilon_{i}+\epsilon_{4}+\epsilon_{6})\}_{i\leq 3}
\cup \{\epsilon_{i}+\epsilon_{j}+\epsilon_{6}\}_{i,j\leq 3}\cup \{
- (\epsilon_{i}+\epsilon_{4}+\epsilon_{5})\}_{i\leq 3}\\
&\cup\{ \epsilon_{i}+\epsilon_{j}+\epsilon_{5}\}_{i,j\leq 3}\cup
\{\epsilon_{i}+\epsilon_{j}+\epsilon_{4}\}_{i,j\leq 3} \cup \{
\epsilon_{1}+\epsilon_{2}+\epsilon_{3},-(\epsilon_{1}+\cdots
+\epsilon_{6})\} .
\end{align*}
Moreover, the image of $R_{0}^{(k)}$ ($1\leq k\leq 6$) through the
antiinvolution $\sigma$ is given by:
\begin{align*}
\sigma (R_{0}^{(1)})&= \{ \pm (\epsilon_{i}-\epsilon_{j})\}_{4\leq
i<j\leq 6}\cup \{ \epsilon_{j}-\epsilon_{i}\}_{i\leq 3,j\geq 4}
\cup\{ -(\epsilon_{i}+\epsilon_{j}+\epsilon_{k}),
-(\epsilon_{1}+\cdots +\epsilon_{6})\};\\
\sigma (R_{0}^{(2)}) &= \{\pm (\epsilon_{i}
-\epsilon_{j})\}_{4\leq i<j\leq 6}\cup \{
\epsilon_{j}-\epsilon_{i}\}_{i\leq 3,j\geq 4}\cup \{
\epsilon_{4}+\epsilon_{5}+\epsilon_{6}, -(\epsilon_{1}+\cdots
+\epsilon_{6})\}\\
&\cup \{-(\epsilon_{i}+\epsilon_{j}+\epsilon_{k})\neq
-(\epsilon_{4}+ \epsilon_{5}+\epsilon_{6})\};\\
\sigma (R_{0}^{(3)})&=\{ \pm (\epsilon_{i}-\epsilon_{j})\}_{4\leq
i<j\leq 6} \cup \{ \epsilon_{j}-\epsilon_{i}\}_{i\leq 3,j\geq 4}
\cup \{ \epsilon_{i}+\epsilon_{j}+\epsilon_{k}\}\cup \{
\epsilon_{1}+\cdots +\epsilon_{6}\} ;\\
\sigma (R_{0}^{(4)})&= \{ \pm (\epsilon_{i}-\epsilon_{j})\}_{4\leq
i<j\leq 6}\cup \{\epsilon_{j}-\epsilon_{i}\}_{i\leq 3,j\geq 4}\cup
\{ \epsilon_{i}+
\epsilon_{j}+\epsilon_{k}\neq \epsilon_{1}+\epsilon_{2}+\epsilon_{3}\}\\
&\cup \{ -(\epsilon_{1}+\epsilon_{2}+\epsilon_{3}),
\epsilon_{1}+\cdots + \epsilon_{6}\} ;\\
\sigma (R_{0}^{(5)}) &= \{\pm (\epsilon_{i}-\epsilon_{j})\}_{4\leq
i<j\leq 6}\cup\{ \epsilon_{j}-\epsilon_{i}\}_{i\leq 3,j\geq 4}\cup
\{ \epsilon_{i}+\epsilon_{5}+\epsilon_{6}\}_{i\leq 4}\\
&\cup \{ \epsilon_{i}+\epsilon_{4}+\epsilon_{6}\}_{i\leq 3} \cup
\{-( \epsilon_{i}+\epsilon_{j}+\epsilon_{6}) \}_{i,j\leq 3}\cup \{
\epsilon_{i}+\epsilon_{4}+\epsilon_{5}\}_{i\leq 3}\\
&\cup\{ -(\epsilon_{i}+\epsilon_{j}+\epsilon_{5})\}_{i,j\leq
3}\cup \{-(\epsilon_{i}+\epsilon_{j}+\epsilon_{4})\}_{i,j\leq 3}
\cup \{ -(\epsilon_{1}+\epsilon_{2}+\epsilon_{3})\}\\
&\cup \{ -(\epsilon_{1}+\cdots +\epsilon_{6})\} ;\\
\sigma (R_{0}^{(6)}) &= \{\pm (\epsilon_{i}-\epsilon_{j})\}_{4\leq
i<j\leq 6}\cup\{ \epsilon_{j}-\epsilon_{i}\}_{i\leq 3,j\geq 4}
\cup\{ \epsilon_{i}+\epsilon_{5}+\epsilon_{6}\}_{i\leq 4}\\
&\cup \{ \epsilon_{i}+\epsilon_{4}+\epsilon_{6}\}_{i\leq 3} \cup
\{-( \epsilon_{i}+\epsilon_{j}+\epsilon_{6}) \}_{i,j\leq 3}\cup \{
\epsilon_{i}+\epsilon_{4}+\epsilon_{5}\}_{i\leq 3}\\
&\cup\{ -(\epsilon_{i}+\epsilon_{j}+\epsilon_{5})\}_{i,j\leq 3}
\cup \{-(\epsilon_{i}+\epsilon_{j}+\epsilon_{4})\}_{i,j\leq 3}
\cup \{ -(\epsilon_{1}+\epsilon_{2}+\epsilon_{3})\}\\
&\cup \{\epsilon_{1}+\cdots +\epsilon_{6}\} .
\end{align*}
\end{prop}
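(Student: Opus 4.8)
The plan is to reduce the classification to a finite combinatorial problem controlled by the closedness of $R_0$ and by the disjointness $R_0 \cap \sigma(R_0) = \emptyset$, after first making the action of $\sigma$ on the three families of roots explicit. From $\sigma(\alpha) = -\theta(\alpha)$ and the values of $\theta$ on $\Pi$ one computes, writing $\phi(i) := 7-i$, that $\sigma$ preserves the subsystem $P := \{\pm(\epsilon_i - \epsilon_j)\}$ of type $A_5$ and acts there by $\sigma(\epsilon_i - \epsilon_j) = \epsilon_{\phi(i)} - \epsilon_{\phi(j)}$; that on triple sums $\sigma(\epsilon_i + \epsilon_j + \epsilon_k) = -\epsilon_{\psi(\{i,j,k\})}$, where $\psi(T) := \{1,\dots,6\} \setminus \{\phi(i) : i \in T\}$ is an involution on $3$-subsets preserving the number $h(T) := |T \cap \{4,5,6\}|$ of ``high'' indices; and that $\sigma(\epsilon_1 + \cdots + \epsilon_6) = -(\epsilon_1 + \cdots + \epsilon_6)$. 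Throughout I use that a $\sigma$-positive system satisfies $R = R_0 \sqcup \sigma(R_0)$, so $\sigma(R_0) = R \setminus R_0$ and each root receives a definite sign according to which of $R_0, \sigma(R_0)$ contains it.

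First I would pin down the difference part $R_0 \cap P$. Since $\sigma$ preserves $P$, $R_0 \cap P$ is a $\sigma|_P$-positive system of $A_5$. The roots $\epsilon_i - \epsilon_{7-i}$ are exactly the $\alpha \in P$ with $\sigma(\alpha) = -\alpha$; they span a $\sigma$-fixed subsystem of type $3A_1$ on which $\sigma = -1$, so (as in the proof of Proposition \ref{clasif2}) $R_0$ meets each $A_1$ in a single root, and since the reflections $s_{\epsilon_i - \epsilon_{7-i}}$ commute with $\sigma$, after applying such Weyl elements I may assume $\epsilon_i - \epsilon_{7-i} \in R_0$ for $i = 1,2,3$. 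Closedness and disjointness then propagate this to give, up to equivalence, $R_0 \cap P = \{\pm(\epsilon_i - \epsilon_j)\}_{i<j\le 3} \cup \{\epsilon_i - \epsilon_j\}_{i \le 3 < j}$: the symmetric $A_2$ on $\{1,2,3\}$ together with all ``downhill'' mixed roots. This is the common difference part of all six systems.

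With this difference part fixed, the only remaining freedom is the sign of each triple sum and of $\epsilon_1 + \cdots + \epsilon_6$, and here two facts combine. Closedness under adding the available difference roots (which may lower a high index to a low one, or permute the low indices, but never raise or permute high indices, as $A_2$ on $\{4,5,6\}$ lies in $\sigma(R_0)$) forces the sign of $\epsilon_T$ to depend only on its high-index set $T \cap \{4,5,6\}$ and to be monotone: if $\epsilon_T \in R_0$ and $T' \cap \{4,5,6\} \subseteq T \cap \{4,5,6\}$ then $\epsilon_{T'} \in R_0$. Disjointness, via $\sigma(\epsilon_T) = -\epsilon_{\psi(T)}$ together with the fact that, as the low indices of $T$ vary, the high-index set of $\psi(T)$ ranges over all subsets of $\{4,5,6\}$ of size $h(T)$, forces the sign to depend only on $h(T)$. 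Combined with monotonicity this shows that the positive triples are exactly those with $h(T) \le h_0$ for a threshold $h_0 \in \{-1,0,1,2,3\}$.

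Finally I would fix the sign of the full sum using the relation $\epsilon_T + \epsilon_{T^c} = \epsilon_1 + \cdots + \epsilon_6$: it forces $+(\epsilon_1 + \cdots + \epsilon_6) \in R_0$ whenever some complementary pair is both positive, i.e. for $h_0 \in \{2,3\}$, and dually $-(\epsilon_1 + \cdots + \epsilon_6) \in R_0$ for $h_0 \in \{-1,0\}$; for the self-dual threshold $h_0 = 1$ no complementary pair has equal signs, so either sign gives a closed, disjoint system. This yields exactly the six systems $R_0^{(1)},\dots,R_0^{(6)}$ (the four forced thresholds together with the two choices at $h_0 = 1$), after which I would verify each is genuinely closed and $\sigma$-disjoint, checking in particular that the residual relation $\epsilon_T - \epsilon_{T'} \in R_0 \cap P$ holds automatically, and read off the images $\sigma(R_0^{(k)}) = R \setminus R_0^{(k)}$ from the explicit action of $\sigma$. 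The main obstacle is the completeness of this enumeration --- above all, proving that the sign of a triple is a function of $h(T)$ alone and that the difference part is unique up to equivalence --- together with the fact that several of the systems (e.g. $R_0^{(1)}$ and $R_0^{(5)}$) share the same symmetric part, so that their pairwise inequivalence must be certified by a finer invariant such as the numbers of positive and negative triple sums.
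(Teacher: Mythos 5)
Your explicit description of the $\sigma$-action (namely $\sigma(\epsilon_i-\epsilon_j)=\epsilon_{7-i}-\epsilon_{7-j}$, $\sigma(\epsilon_T)=-\epsilon_{\psi(T)}$ with $\psi$ preserving $h(T)$, and $\sigma(\epsilon_1+\cdots+\epsilon_6)=-(\epsilon_1+\cdots+\epsilon_6)$) is correct, and, \emph{granted} the mixed difference part, your threshold analysis of the triple sums does reproduce exactly the six systems $R_0^{(1)},\dots,R_0^{(6)}$ (it is a cleaner packaging of the paper's case analysis via (\ref{A}), (\ref{AA}), (\ref{sub1})--(\ref{sta}); note you would still have to rule out a triple occurring in $R_0$ with both signs, which is done by looking at the unique $\psi$-fixed triple with a given high part). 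The genuine gap is the step where you claim that, once $\epsilon_1-\epsilon_6,\epsilon_2-\epsilon_5,\epsilon_3-\epsilon_4\in R_0$ have been arranged, closedness and disjointness ``propagate'' to force $R_0\cap P=\{\pm(\epsilon_i-\epsilon_j)\}_{i<j\le 3}\cup\{\epsilon_i-\epsilon_j\}_{i\le 3<j}$. This is false: the set $Q=\{\epsilon_i-\epsilon_j:\ i<j\}$ of increasing difference roots also contains your three normalized roots, is closed, and satisfies $\sigma(Q)=P\setminus Q$ (since $\sigma$ turns increasing differences into decreasing ones), so it is a competing $\sigma|_P$-positive system that your argument never excludes. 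Worse, it is realized: since $\theta$ permutes $\Pi$, the positive system $R^+$ of $\Pi$ satisfies $\sigma(R^+)=-\theta(R^+)=-R^+$, so $R^+=\{\epsilon_i-\epsilon_j\}_{i<j}\cup\{\epsilon_i+\epsilon_j+\epsilon_k\}_{i<j<k}\cup\{\epsilon_1+\cdots+\epsilon_6\}$ is a genuine $\sigma$-positive system of $E_6$ whose difference part is $Q$.

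This gap cannot be closed, because the statement itself fails on that example: each allowed equivalence $R_0\mapsto -R_0$, $R_0\mapsto\sigma(R_0)$, $R_0\mapsto w(R_0)$ with $w\sigma=\sigma w$ preserves the cardinality of $R_0^{\mathrm{sym}}=R_0\cap(-R_0)$, which equals $0$ for $R^+$ but $6$ for every listed system (each contains $\{\pm(\epsilon_i-\epsilon_j)\}_{1\le i\ne j\le 3}$). Hence $R^+$ is equivalent to none of $R_0^{(1)},\dots,R_0^{(6)}$, and any proof of Proposition \ref{positiveE6} as stated must break somewhere; yours breaks exactly at the difference-part step. For what it is worth, the paper's own proof stumbles at the same spot: it asserts that, $\tilde{R}=[\alpha_1,\alpha_2,\alpha_4,\alpha_5]$ having two irreducible components, the partition $\tilde{R}=(\tilde{R}\cap R_0)\sqcup(\tilde{R}\cap\sigma(R_0))$ into closed subsets must be componentwise; but an irreducible $A_2$ does split into two nonempty closed subsets (a positive system and its negative), which is precisely what happens for $R^+$. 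A correct treatment would have to enumerate \emph{all} $\sigma|_P$-positive difference parts (in particular the antisymmetric ones such as $Q$, and any with symmetric part of intermediate size), run your triple-sum combinatorics separately over each, and enlarge the list of normal forms accordingly --- so the obstacle you flagged at the end (completeness of the enumeration) is exactly where both your argument and the paper's fail.
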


\begin{proof}
The proof is long but straightforward, and uses the action of
$\sigma$ on roots and the properties of $\sigma$-positive systems
(see Definition \ref{sigmapara}). We give only the sketch of the
argument. Let $R_{0}$ be a $\sigma$-positive system of $R$ and
\[
\tilde{R}= [\alpha_{1},\alpha_{2}, \alpha_{4},\alpha_{5}]
\]
the closed set of roots consisting of all roots from $R$ which are
linear combinations of  the simple roots $\alpha_{i}$, for
$i=1,2,4,5.$ Since $\tilde{R}$ has two irreducible components and
$\tilde{R}=(\tilde{R}\cap R_{0})\cup\left( \tilde{R}\cap\sigma
(R_{0})\right)$ is a decomposition of $\tilde{R}$ into a disjoint
union of two closed subsets, we may assume, replacing if
necessary, $R_{0}$ by $\sigma (R_{0})$, that:
\[
\tilde{R} \cap R_{0}= [\alpha_{1}, \alpha_{2}],\quad \tilde{R}\cap
\sigma (R_{0}) = [\alpha_{4}, \alpha_{5}].
\]
In particular,
\[
[\alpha_{1}, \alpha_{2}]= \{\pm (\epsilon_{1}-\epsilon_{2}), \pm
(\epsilon_{2}-\epsilon_{3}), \pm
(\epsilon_{1}-\epsilon_{3})\}\subset R_{0}.
\]
Since $\sigma$ interchanges the pairs $(\alpha_{3}, \alpha_{6})$
and $(-\alpha_{3}, -\alpha_{6})$, replacing again, if necessary,
$R_{0}$ by $-R_{0}$, we have the following two possibilities:
either
\begin{equation}\label{A}
\{ \pm (\epsilon_{1}-\epsilon_{2}), \pm
(\epsilon_{2}-\epsilon_{3}), \pm (\epsilon_{1}-\epsilon_{3}),
\alpha_{3}=\epsilon_{3}-\epsilon_{4},\alpha_{6}= \epsilon_{4}+\epsilon_{5}+
\epsilon_{6}\} \subset R_{0}
\end{equation}
or
\begin{equation}\label{AA}
\{ \pm (\epsilon_{1}-\epsilon_{2}), \pm
(\epsilon_{2}-\epsilon_{3}), \pm (\epsilon_{1}-\epsilon_{3}),
\alpha_{3}=\epsilon_{3}-\epsilon_{4},-\alpha_{6}= -(\epsilon_{4}+\epsilon_{5}+
\epsilon_{6}) \} \subset R_{0}.
\end{equation}
Using that $R_{0}$ is a $\sigma$-positive system one may check if
(\ref{A}) holds then $R_{0} = R_{0}^{(1)}$. Assume now that
(\ref{AA}) holds. We distinguish two further subcases: when
\begin{equation}\label{sub1}
\epsilon_{3}+\epsilon_{5}+\epsilon_{6}\in R_{0}
\end{equation}
and, respectively, when
\begin{equation}\label{sub2}
\epsilon_{3}+\epsilon_{5}+\epsilon_{6}\in\sigma ( R_{0}).
\end{equation}
When (\ref{AA}) and (\ref{sub1}) hold, $R_{0}= R_{0}^{(2)}.$
Assume now that (\ref{AA}) and (\ref{sub2}) hold. It turns out
that if
\begin{equation}\label{sta}
-(\epsilon_{2}+ \epsilon_{3}+\epsilon_{6})\in R_{0}
\end{equation}
then $R_{0}= R_{0}^{(3)}$, provided that
$-(\epsilon_{1}+\epsilon_{2}+\epsilon_{3})\in R_{0}$, or $R_{0}=
R_{0}^{(4)}$, provided that
$-(\epsilon_{1}+\epsilon_{2}+\epsilon_{3})\in\sigma ( R_{0})$. If
(\ref{AA}) and (\ref{sub2}) hold but (\ref{sta}) does not, then
$R_{0}= R_{0}^{(5)}$, provided that $\epsilon_{1}+\cdots
+\epsilon_{6}\in R_{0}$, or $R_{0}= R_{0}^{(6)}$, provided that
$\epsilon_{1}+\cdots +\epsilon_{6}\in\sigma ( R_{0})$.

\end{proof}

\vspace{10pt}

{\bf c) $\ge_6^{\sigma}$-admissible  pairs}\\

We preserve the notations from the previous paragraphs. We denote
by
\[
\gk^{(k)} := \gh_{0}^{(k)} + \gg( R_{0}^{(k)})
\]
a regular subalgebra of $\ge_{6}$, normalized by $\gh^{\sigma}$,
with root system $R_{0}^{(k)}$ ($1\leq k\leq 6$) described in
Proposition \ref{positiveE6} and with Cartan part
$\gh_{0}^{(k)}\subset\gh$, satisfying $\gh_{0}^{(k)} +\sigma
(\gh_{0}^{(k)}) = \gh$. Since
\[
R_{0}^{(k), \mathrm{sym}}= \{ \pm (\epsilon_{i}-\epsilon_{j}),
1\leq i\neq j\leq 3\}
\]
and
\[
\sigma (R_{0}^{(k), \mathrm{sym}})= \{ \pm
(\epsilon_{i}-\epsilon_{j}), 4\leq i\neq j\leq 6\}
\]
for any $1\leq k\leq 6$, condition (\ref{condfin-1}) is satisfied.
Moreover,
\[
\mathrm{Span} (R_{0}^{(k), \mathrm{sym}})^{\flat} = \{
\sum_{i=1}^{3}\lambda_{i}e_{i},\quad \sum_{i=1}^{3}\lambda_{i}=0\}
\]
is included in $\gh_{0}^{(k)}$, because $\gk^{(k)}$ is a
subalgebra.
\begin{thm} \label{e66} For any $1\leq k\leq 6$, the $2$-form $\omega_{(k)}$
on  $\gk^{(k)}$,  defined by
\begin{align*}
\omega_{(1)}& :=\widehat{\omega}_{(1)}+\lambda_{1} (\epsilon_{4}
+\epsilon_{5}+\epsilon_{6})\wedge
\omega_{\epsilon_{4}+\epsilon_{5}+\epsilon_{6}}\\
\omega_{(2)}&:= \widehat{\omega}_{(2)}+\lambda_{2} (\epsilon_{4}
+\epsilon_{5}+\epsilon_{6})\wedge
\omega_{-(\epsilon_{4}+\epsilon_{5}+\epsilon_{6})}\\
\omega_{(3)}&:= \widehat{\omega}_{(3)}+\lambda_{3}(\epsilon_{1}
+\epsilon_{2}+\epsilon_{3})\wedge
\omega_{-(\epsilon_{1}+\epsilon_{2}+\epsilon_{3})}\\
\omega_{(4)}&:= \widehat{\omega}_{(4)}+ \lambda_{4}(\epsilon_{1}
+\epsilon_{2}+\epsilon_{3})\wedge
\omega_{\epsilon_{1}+\epsilon_{2}+\epsilon_{3}}\\
\omega_{(5)}&:= \widehat{\omega}_{(5)}+\lambda_{5}(\epsilon_{1}
+\cdots +\epsilon_{6})\wedge
\omega_{\epsilon_{1}+\cdots +\epsilon_{6}}\\
\omega_{(6)}&:= \widehat{\omega}_{(6)}+ \lambda_{6}(\epsilon_{1}
+\cdots +\epsilon_{6})\wedge \omega_{-(\epsilon_{1}+\cdots
+\epsilon_{6})}
\end{align*}
where $\lambda_{k}\in\mathbb{C}$ and $\widehat{\omega}_{(k)}$ is a
$2$-form on $\gh_{0}^{(k)}$ with
\begin{equation}\label{conditie-o}
\widehat{\omega}_{(k)}(e_{i}-e_{j},\cdot )=0,\quad\forall 1\leq
i,j\leq 3 ,
\end{equation}
is  closed. If, moreover,
$\mathrm{Im}\left(\widehat{\omega}_{(k)}\right)$ is non-degenerate
on $\gh_{0}^{(k)}\cap \gh^{\sigma}$, then
$(\gk^{(k)},\omega_{(k)})$ is an admissible pair and it defines a
regular generalized complex structure on the real Lie group
$(E_{6})^{\sigma}.$
\end{thm}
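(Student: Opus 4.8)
The plan is to verify the closedness of each $\omega_{(k)}$ by treating its two summands separately and then to read off admissibility from the non-degeneracy hypothesis. Write $\omega_{(k)} = \widehat{\omega}_{(k)} + \lambda_k\,\mu_k\wedge\omega_{\nu_k}$, where $\mu_k\in\gh^*$ is the displayed weight and $\nu_k\in R_0^{(k)}$ is the root dual to the covector appearing, so that $\mu_k=\pm\nu_k$ in each of the six cases (e.g. $\mu_1=\nu_1=\epsilon_4+\epsilon_5+\epsilon_6$, while $\mu_2=\epsilon_4+\epsilon_5+\epsilon_6=-\nu_2$). Since $d_{\gk^{(k)}}$ is linear, it suffices to show $d_{\gk^{(k)}}\widehat{\omega}_{(k)}=0$ and $d_{\gk^{(k)}}\bigl(\mu_k\wedge\omega_{\nu_k}\bigr)=0$.

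For the Cartan piece I would note that the only brackets in $\gk^{(k)}$ landing in $\gh$ are the $[E_\alpha,E_{-\alpha}]$ with $\alpha\in R_0^{(k),\mathrm{sym}}$, and these lie in $\mathcal S=\mathrm{Span}(R_0^{(k),\mathrm{sym}})^{\flat}$. Evaluating $d\widehat{\omega}_{(k)}$ on a triple $(H,E_\alpha,E_{-\alpha})$ with $H\in\gh_0^{(k)}$ leaves only $\widehat{\omega}_{(k)}(H,[E_\alpha,E_{-\alpha}])$, since the trivial extension kills any term containing a root vector. Because $R_0^{(k),\mathrm{sym}}=\{\pm(\epsilon_i-\epsilon_j):1\le i,j\le 3\}$ and $[E_{\epsilon_i-\epsilon_j},E_{\epsilon_j-\epsilon_i}]$ is proportional to $e_i-e_j$, this vanishing is exactly the content of hypothesis (\ref{conditie-o}); this is the same mechanism as the general condition (\ref{conditie-closed}), and it shows $\widehat{\omega}_{(k)}$ is closed.

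For the mixed term I would use $d(\mu_k\wedge\omega_{\nu_k})=d\mu_k\wedge\omega_{\nu_k}-\mu_k\wedge d\omega_{\nu_k}$. The weight $\mu_k$, trivially extended, is closed on $\gk^{(k)}$: again the only contribution to $d\mu_k$ comes from the pairs $(E_\alpha,E_{-\alpha})$, $\alpha\in R_0^{(k),\mathrm{sym}}$, and a direct computation with the explicit Killing form shows that each of the three weights $\epsilon_4+\epsilon_5+\epsilon_6$, $\epsilon_1+\epsilon_2+\epsilon_3$ and $\epsilon_1+\cdots+\epsilon_6$ is orthogonal to $\mathrm{Span}\{\epsilon_i-\epsilon_j:i,j\le 3\}$, so $\mu_k(H_\alpha)=0$ and $d\mu_k=0$. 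For the remaining factor, $(d\omega_{\nu_k})(X,Y)=-\omega_{\nu_k}([X,Y])$ yields $d\omega_{\nu_k}=-\nu_k\wedge\omega_{\nu_k}-\frac{1}{2}\sum_{\alpha+\beta=\nu_k}N_{\alpha\beta}\,\omega_\alpha\wedge\omega_\beta$, the sum over $\alpha,\beta\in R_0^{(k)}$. Wedging with $\mu_k$ and using $\mu_k=\pm\nu_k$ kills the $\nu_k\wedge\omega_{\nu_k}$ part, leaving $d(\mu_k\wedge\omega_{\nu_k})=\frac{1}{2}\sum_{\alpha+\beta=\nu_k}N_{\alpha\beta}\,\mu_k\wedge\omega_\alpha\wedge\omega_\beta$. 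The main obstacle — and the only place needing genuine checking — is that this sum is empty: in each of the six systems the distinguished root $\nu_k$ admits no decomposition $\nu_k=\alpha+\beta$ with both $\alpha,\beta\in R_0^{(k)}$. I would dispatch this by the finite combinatorics of $E_6$ roots: a triple $\pm(\epsilon_a+\epsilon_b+\epsilon_c)$ can split only as a difference plus a triple, and the full sum $\pm(\epsilon_1+\cdots+\epsilon_6)$ only as two complementary triples, and in every case one summand has the wrong index pattern for $R_0^{(k)}$ — for instance any partition of $\epsilon_1+\cdots+\epsilon_6$ into two triples forces blocks of shape ``two indices $\le 3$, one index $\ge 4$'' and the opposite, only one of which lies in $R_0^{(5)}$, while the split $(\epsilon_1+\epsilon_2+\epsilon_3)+(\epsilon_4+\epsilon_5+\epsilon_6)$ has $\epsilon_4+\epsilon_5+\epsilon_6\in\sigma(R_0^{(5)})$. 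Granting this, $\omega_{(k)}$ is closed.

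Finally, for admissibility I would argue that, $R_0^{(k)}$ being $\sigma$-positive with $\gh_0^{(k)}+\overline{\gh_0^{(k)}}=\gh$, one has $\gk^{(k)}+\overline{\gk^{(k)}}=\gg^{\mathbb C}$ and $\gk^{(k)}\cap\overline{\gk^{(k)}}=\gh_0^{(k)}\cap\overline{\gh_0^{(k)}}$, so $\gl=\gh_0^{(k)}\cap\gg$. Since $\omega_{\nu_k}$ annihilates all of $\gh$, the mixed term restricts to zero on $\gl$, whence $\omega_{(k)}|_{\gl}=\widehat{\omega}_{(k)}|_{\gl}$; therefore $\mathrm{Im}(\omega_{(k)}|_{\gl})$ is non-degenerate precisely when $\mathrm{Im}(\widehat{\omega}_{(k)})$ is non-degenerate on $\gh_0^{(k)}\cap\gh^{\sigma}$. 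Under this hypothesis $(\gk^{(k)},\omega_{(k)})$ is a $\gg$-admissible pair, and by Theorem \ref{20} it defines a regular invariant generalized complex structure on $(E_6)^{\sigma}$.
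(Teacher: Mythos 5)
Your proof is correct and follows essentially the same route as the paper: the paper also splits $\omega_{(k)}$ into the Cartan part, closed by (\ref{conditie-o}) via the mechanism of (\ref{conditie-closed}), and the mixed part, which it kills by the product rule using exactly your two facts --- the weight annihilates $\mathrm{Span}(R_0^{(k),\mathrm{sym}})^{\flat}$, so it is closed, and $d_{\gk^{(k)}}\omega_{\nu_k}=-\nu_k\wedge\omega_{\nu_k}$ because $\nu_k$ is not a sum of two roots of $R_0^{(k)}$. The only differences are that the paper carries this out just for $k=1$, declaring the remaining cases similar, whereas you treat all six uniformly through $\mu_k=\pm\nu_k$, and you also spell out the admissibility step ($\gl=\gh_0^{(k)}\cap\gg$ and $\omega_{(k)}\vert_{\gl}=\widehat{\omega}_{(k)}\vert_{\gl}$), which the paper leaves implicit.

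One caveat in your combinatorial verification, which is the step the paper simply asserts: a triple root does \emph{not} split only as ``difference plus triple''; it also splits as ``full sum plus negative complementary triple'', e.g. $\epsilon_4+\epsilon_5+\epsilon_6=(\epsilon_1+\cdots+\epsilon_6)+\bigl(-(\epsilon_1+\epsilon_2+\epsilon_3)\bigr)$, and in this splitting the first summand \emph{does} lie in $R_0^{(1)}$. Your conclusion survives because the second summand is a negative triple, hence excluded from $R_0^{(1)}$ (and the analogous check goes through for $k=2,3,4$), but this splitting type must be added to your enumeration for the non-decomposability claim to be fully verified.
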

\begin{proof}
We only prove the statement for $\omega_{(1)}$, the proofs for the
remaining $\omega_{(k)}$, $2\leq k\leq 6$, being similar. Recall
the formula for the exterior derivative of a covector $\beta \in
(\gk^{(1)})^{*}$:
\begin{equation}\label{e6-1}
d_{\gk^{(1)}}\beta (X, Y) = -\beta ([X,Y]), \quad\forall X, Y\in
\gk^{(1)}.
\end{equation}
From (\ref{conditie-o}) and (\ref{e6-1}), $\widehat{\omega}_{(1)}$
is closed on $\gk^{(1)}$ (see also (\ref{conditie-closed})). Since
the root $\epsilon_{4} +\epsilon_{5} +\epsilon_{6}\in R_{0}^{(1)}$
cannot be written as a sum of two roots, both from $R_{0}^{(1)}$,
relation (\ref{e6-1}) implies that
\begin{equation}\label{e61}
d_{\gk^{(1)}}\left(
\omega_{\epsilon_{4}+\epsilon_{5}+\epsilon_{6}}\right)=
-(\epsilon_{4}+\epsilon_{5}+\epsilon_{6})\wedge
\omega_{\epsilon_{4}+ \epsilon_{5}+\epsilon_{6}}.
\end{equation}
Also,
\begin{equation}\label{e6-2}
d_{\gk^{(1)}}(\epsilon_{4}+\epsilon_{5}+\epsilon_{6})=0,
\end{equation}
where in (\ref{e6-2}) we used that
$\epsilon_{4}+\epsilon_{5}+\epsilon_{6}$ annihilates
$\mathrm{Span} (R_{0}^{(1), \mathrm{sym}})^{\flat}.$ From
(\ref{e61}) and (\ref{e6-2}),
$(\epsilon_{4}+\epsilon_{5}+\epsilon_{6})\wedge
\omega_{\epsilon_{4}+\epsilon_{5}+\epsilon_{6}}$ is closed on
$\gk^{(1)}.$ Thus $\omega_{(1)}$ is also closed.

\end{proof}

\begin{rem}\begin{enumerate}
\item  All  regular $\gg$-admissible pairs $(\gk ,\omega )$ from this section have the property that
the root system $R_{0}$ of  $\gk$ is a $\sigma$-positive system.
The problem of constructing more general regular $\gg$-admissible pairs,
with $\sigma$-parabolic system $R_{0}$, is left open.

\item
In this paper we studied in detail regular generalized complex structures
on semisimple Lie groups. The problem of constructing invariant generalized complex structures on these groups,
which are not regular, remains open. Further investigation in this direction is needed and will hopefully be done in a forthcoming paper.

\end{enumerate}

\end{rem}

\vspace{10pt}

{\bf AUTHORS ADDRESS:}\\

D.V.ALEKSEEVSKY:  Edinburgh University, The King's Buildings,
JCMB, Mayfield Road, Edinburgh, EH9 3JZ, Great Britain and Hamburg
University, Bundesstrasse 55, Hamburg 20146, Germany; e-mail:
D.Aleksee@ed.ac.uk;\\

L. DAVID: Institute of Mathematics Simion Stoilow of the Romanian
Academy, Calea Grivitei no. 21, Sector 1, Bucharest, Romania;
liana.david@imar.ro

\end{document}